\documentclass[12pt]{amsart}

\pdfoutput = 1

\usepackage[all]{xy}
\usepackage{amssymb}
\usepackage{eucal}

\usepackage[utf8]{inputenc}
\usepackage[T1]{fontenc}
\usepackage[backend=biber,
            isbn=false,
            doi=false,
            maxbibnames=5,
            giveninits=true,
            style=alphabetic,
            citestyle=alphabetic]{biblatex}
\usepackage{url}
\setcounter{biburllcpenalty}{7000}
\setcounter{biburlucpenalty}{8000}
\renewbibmacro{in:}{}
\bibliography{lectures.bib}

\usepackage[colorlinks=true]{hyperref}
\usepackage{cleveref}

\textwidth=16.5cm
\oddsidemargin=0cm
\evensidemargin=0cm
\textheight=22cm
\topmargin=0cm

\newcommand{\bA}{\mathbb{A}}
\newcommand{\bD}{\mathbb{D}}
\newcommand{\bE}{\mathbb{E}}
\newcommand{\bG}{\mathbb{G}}
\newcommand{\bL}{\mathbb{L}}
\newcommand{\bP}{\mathbb{P}}
\newcommand{\bT}{\mathbb{T}}

\newcommand{\B}{\mathrm{B}}
\newcommand{\C}{\mathrm{C}}
\renewcommand{\d}{\mathrm{d}}
\renewcommand{\H}{\mathrm{H}}
\renewcommand{\L}{\mathrm{L}}
\newcommand{\N}{\mathrm{N}}
\newcommand{\R}{\mathrm{R}}
\newcommand{\T}{\mathrm{T}}
\newcommand{\U}{\mathrm{U}}
\newcommand{\Z}{\mathrm{Z}}

\newcommand{\cA}{\mathcal{A}}
\newcommand{\cB}{\mathcal{B}}
\newcommand{\cC}{\mathcal{C}}
\newcommand{\cF}{\mathcal{F}}
\newcommand{\cG}{\mathcal{G}}
\newcommand{\cL}{\mathcal{L}}
\newcommand{\cM}{\mathcal{M}}
\newcommand{\cO}{\mathcal{O}}
\newcommand{\cS}{\mathcal{S}}

\newcommand{\g}{\mathfrak{g}}
\newcommand{\h}{\mathfrak{h}}

\newcommand{\alg}{\mathrm{Alg}}
\newcommand{\ialg}{\mathcal{A}\mathrm{lg}}
\newcommand{\bialg}{\mathrm{BiAlg}}
\newcommand{\calg}{\mathrm{CAlg}}
\newcommand{\icalg}{\mathcal{C}\mathrm{Alg}}
\newcommand{\ich}{\mathcal{C}\mathrm{h}}
\newcommand{\coalg}{\mathrm{CoAlg}}
\renewcommand{\mod}{\mathrm{Mod}}
\newcommand{\imod}{\mathcal{M}\mathrm{od}}
\newcommand{\ibimod}{\mathcal{B}\mathrm{iMod}}
\newcommand{\ilmod}{\mathcal{L}\mathrm{Mod}}
\newcommand{\irmod}{\mathcal{R}\mathrm{Mod}}

\newcommand{\ad}{\mathrm{ad}}

\newcommand{\Alt}{\mathrm{Alt}}
\newcommand{\Arr}{\mathrm{Arr}}
\newcommand{\Ass}{\mathrm{Ass}}
\newcommand{\Bun}{\mathrm{Bun}}
\newcommand{\CH}{\mathrm{CH}}
\newcommand{\coLie}{\mathrm{coLie}}
\newcommand{\Cois}{\mathrm{Cois}}
\newcommand{\Comm}{\mathrm{Comm}}
\newcommand{\Comp}{\mathrm{Comp}}
\newcommand{\DR}{\mathbf{DR}}
\newcommand{\dR}{\mathrm{dR}}
\newcommand{\ddr}{\mathrm{d}_{\dR}}
\newcommand{\fib}{\mathrm{fib}}
\newcommand{\forget}{\mathrm{forget}}
\newcommand{\Fun}{\mathrm{Fun}}
\newcommand{\hG}{\widehat{G}}
\newcommand{\Hom}{\mathrm{Hom}}
\newcommand{\id}{\mathrm{id}}
\newcommand{\IndCoh}{\mathrm{IndCoh}}
\newcommand{\Isot}{\mathrm{Isot}}
\newcommand{\Lagr}{\mathrm{Lagr}}
\newcommand{\Lie}{\mathrm{Lie}}
\newcommand{\Map}{\mathrm{Map}}
\newcommand{\MC}{\mathrm{MC}}
\newcommand{\uMC}{\underline{\mathrm{MC}}}
\newcommand{\Obs}{\mathrm{Obs}}
\newcommand{\obs}{\mathrm{obs}}
\newcommand{\opp}{\mathrm{opp}}
\newcommand{\Perf}{\mathrm{Perf}}
\newcommand{\PGL}{\mathrm{PGL}}
\newcommand{\Pois}{\mathrm{Pois}}
\newcommand{\Pol}{\mathrm{Pol}}
\newcommand{\pt}{\mathrm{pt}}
\newcommand{\QCoh}{\mathrm{QCoh}}
\newcommand{\Sym}{\mathrm{Sym}}
\newcommand{\Symp}{\mathrm{Symp}}
\newcommand{\Tot}{\mathrm{Tot}}
\newcommand{\triv}{\mathrm{triv}}

\DeclareMathOperator{\colim}{colim}
\DeclareMathOperator{\Rep}{Rep}
\DeclareMathOperator{\Spec}{Spec}

\newcommand{\adj}[2]{
\xymatrix{
#1 \ar@<.5ex>[r] & #2 \ar@<.5ex>[l]
}
}

\newcommand{\cosimp}[2]{
\xymatrix{
#1 \ar@<.5ex>[r] \ar@<-.5ex>[r] & #2 \ar@<.8ex>[r] \ar[r] \ar@<-.8ex>[r] & \ldots
}
}

\newcommand{\simp}[2]{
\xymatrix{
#1 & #2 \ar@<.5ex>[l] \ar@<-.5ex>[l] & \ldots \ar@<.8ex>[l] \ar[l] \ar@<-.8ex>[l]
}
}

\newcommand{\defterm}[1]{\textbf{\emph{#1}}}

\newtheorem{thm}{Theorem}[section]
\newtheorem{prop}[thm]{Proposition}
\newtheorem{cor}[thm]{Corollary}
\newtheorem{lm}[thm]{Lemma}
\newtheorem{conjecture}[thm]{Conjecture}

\theoremstyle{definition}
\newtheorem{defn}[thm]{Definition}
\newtheorem{notation}[thm]{Notation}

\theoremstyle{remark}
\newtheorem{remark}[thm]{Remark}
\newtheorem{example}[thm]{Example}

\begin{document}
\title{Lectures on shifted Poisson geometry}
\author{Pavel Safronov}
\address{Institut f\"{u}r Mathematik, Winterthurerstrasse 190, 8057 Z\"{u}rich, Switzerland}
\email{pavel.safronov@math.uzh.ch}
\begin{abstract}
These are expanded notes from lectures given at the \'{E}tats de la Recherche workshop on ``Derived algebraic geometry and interactions''. These notes serve as an introduction to the emerging theory of Poisson structures on derived stacks.
\end{abstract}
\maketitle

\section*{Introduction}

\subsection*{Higher Poisson structures}

Poisson and symplectic structures go back to the works of Lagrange and Poisson on classical mechanics. Classical mechanical systems describe individual particles and as such are examples of 1-dimensional classical field theories.

A precursor to the theory of higher Poisson structures is the work \cite{BV} of Batalin and Vilkovisky on field theories with symmetries. Their work shows how to endow the (derived) critical locus of an action functional with a $(-1)$-shifted symplectic structure such that an introduction of a symmetry corresponds to a $(-1)$-shifted symplectic reduction. The first explicit occurrence of higher symplectic geometry in relation to classical field theories is the work \cite{AKSZ} describing how to obtain action functionals of $n$-dimensional topological field theories from NQ manifolds equipped with a symplectic structure of degree $(n-1)$ (a smooth analog of the notion of an $(n-1)$-shifted symplectic stack). We can informally summarize the AKSZ construction as follows:
\begin{itemize}
\item All classical topological field theories of dimension $n$ arise from $(n-1)$-shifted symplectic stacks.
\end{itemize}

For instance, the case of classical mechanics corresponds to $n=1$ in which case we have $0$-shifted (i.e. ordinary) symplectic manifolds. The work of Batalin and Vilkovisky corresponds to $n=0$ in which case we have $(-1)$-shifted symplectic stacks.

In a different direction, the theory of multiplicative Poisson structures (Poisson-Lie groups and Poisson groupoids) gave rise to the notions of Courant algebroids and Dirac structures, see \cite{LWX}. In turn, Courant algebroids and Dirac structures were later interpreted in terms of higher symplectic geometry by Roytenberg \cite{Roy} and \v{S}evera \cite{Sev}.

\subsection*{Deformation quantization}

Another motivation to study higher Poisson structures is the theory of deformation quantization. In quantum mechanics observables form an associative algebra which is supposed to be a deformation (with deformation parameter $\hbar$, the Planck's constant) of the Poisson algebra of functions on the classical phase space. Once we generalize commutative algebras to commutative dg algebras, it is natural to consider Poisson algebras with the bracket of a nonzero cohomological degree. These are known as $\bP_n$-algebras so that a $\bP_1$-algebra is the same as a dg Poisson algebra.

Let us now consider an $n$-dimensional topological field theory. As we have mentioned, such a field theory on the classical level is determined by an $(n-1)$-shifted symplectic stack whose algebra of functions has a $\bP_n$-structure. The operator product expansion endows the cohomology of local quantum observables with a commutative product. It turns out that on the chain level the product is not coherently commutative but instead forms what is known as an $\bE_n$-algebra. We refer to \cite{CG} for an explanation of related ideas. For instance, an $\bE_\infty$-algebra is a commutative dg algebra and an $\bE_1$-algebra is an associative algebra. Similar to the previous case, it is expected that the $\bE_n$-algebras of local quantum observables are deformation quantizations of the $\bP_n$-algebras of functions on the clasical phase space described by the $(n-1)$-shifted symplectic stack.

An example of higher deformation quantization is given by the theory of quantum groups which can be considered as deformation quantizations of the $2$-shifted symplectic stack $\B G$, the classifying stack of a reductive group $G$. 

The theory of higher deformation quantization is still in its infancy, so we will not discuss it in the main body of the text. Let us just mention several recent articles that deal with this topic: \cite{ToICM}, \cite[Section 3.5]{CPTVV}, \cite[Section 5]{MS2}, \cite{Pri2}, \cite{Pri3} and \cite{Pri4}.

\subsection*{Implications for classical geometry}

So far we have explained why one is interested in generalizations of Poisson geometry to the derived setting. However, the derived perspective can also be useful for questions involving only classical geometric objects.

In Donaldson--Thomas theory one considers moduli spaces $\cM_{st}(X)$ of stable sheaves on a Calabi--Yau threefold $X$. The Donaldson--Thomas invariants can be computed in terms of a symmetric obstruction theory on $\cM_{st}(X)$, see e.g. \cite{BF} and \cite{Behr2}. The moduli spaces $\cM_{st}(X)$ are open substacks in the classical truncation of the derived moduli stack $\Perf(X)$ of perfect complexes on $X$. In turn, the symmetric obstruction theory on $\cM_{st}(X)$ is a shadow of the $(-1)$-shifted symplectic structure on $\Perf(X)$, see \cite[Section 3.2]{PTVV}. Even though the symmetric obstruction theory is enough to define ordinary Donaldson--Thomas invariants, for their categorification one needs to consider the full $(-1)$-shifted symplectic structure on $\Perf(X)$, see \cite{BBBBJ}. Moreover, the existence of the $(-1)$-shifted symplectic structure on $\Perf(X)$ implies that $\cM_{st}(X)$ can be covered by charts given by a critical locus of a function.

Another fruitful application is the construction of Poisson structures on several moduli spaces $\cM$. Classically it proceeds as follows. First, one restricts to an open subspace $\cM^{sm}\subset \cM$ consisting of smooth and non-stacky points. Second, it is usually easy to define a bivector on $\cM^{sm}$. The final step involves a nontrivial computation to show that the bivector satisfies the Jacobi identity.

Suppose one can realize $\cM$ as a classical truncation of a derived mapping stack. Then it inherits shifted symplectic and shifted Lagrangian structures by the AKSZ construction. Using a relation between shifted symplectic and shifted Poisson structures one can endow $\cM$ with a 0-shifted Poisson structure which then gives an ordinary Poisson structure on the smooth locus $\cM^{sm}$. We refer to \cite[Section 3.1]{PTVV} which treats the Goldman Poisson structure \cite{Go} on the character variety and the symplectic structure on the moduli of bundles on a K3 surface \cite{Muk} and to \cref{sect:modulibundles} for the case of the Feigin--Odesskii Poisson structure \cite{FO}.

\subsection*{Conventions}

\begin{itemize}
\item Throughout the notes we will freely use the language of $\infty$-categories. We refer to \cite{HTT} and \cite{HA} as foundational texts which use quasi-categories as models for $\infty$-categories and to \cite{Gro} as an introduction to the theory. This language is indispensable in dealing with higher homotopical structures and descent questions.

\item We will work in the framework of derived algebraic geometry, see \cite{HAGII} for details and \cite{Cal2}, \cite{To} and \cite{ToStacks} for an introduction. We work over a base field $k$ of characteristic zero.
\end{itemize}

In this text we mainly review the papers \cite{CPTVV}, \cite{MS1}, \cite{MS2} and \cite{Pri1} on shifted Poisson geometry without many technical details to simplify the exposition. We refer to \cite{PV} which treats certain topics omitted in this review.

\subsection*{Acknowledgements}

The author would like to thank A. Brochier, D. Calaque, G. Ginot, V. Melani, T. Pantev, B. Pym, N. Rozenblyum and B. To\"{e}n for many conversations about shifted Poisson structures. In particular, the author thanks B. Pym for reading a draft of the notes and making several useful suggestions. While writing this overview, the author was supported by the NCCR SwissMAP grant of the Swiss National Science Foundation.

\section{Shifted Poisson algebras}

In the first lecture we review what Poisson and $\bP_n$-algebras are, explain how to relate $\bP_n$-algebras for different $n$ using Poisson additivity and discuss the definition of an $n$-shifted Poisson structure on a derived Artin stack.

\subsection{Classical definitions}

Let $X$ be a smooth scheme over a field $k$ and $\cO_X$ the structure sheaf.

\begin{defn}
A \defterm{Poisson structure} on $X$ is the data of a Lie bracket
\[\{-, -\}\colon \cO_X\otimes_k \cO_X\rightarrow \cO_X\]
satisfying the Leibniz rule $\{f, gh\} = \{f, g\}h + \{f, h\} g$ for any $f,g,h\in\cO_X$.
\end{defn}

If $X$ is affine, a Poisson structure on $X$ in the above sense can be equivalently stated in terms of a Lie bracket on the algebra of global functions $\cO(X)$.

\begin{example}
Suppose $Y$ is a smooth affine scheme and consider $X = \T^* Y$. Then
\[\cO(X) \cong \Gamma(Y, \Sym(\T_Y)).\]
It is generated as a commutative algebra by $\cO(Y)$ and $\Gamma(Y, \T_Y)$. By the Leibniz rule the Poisson bracket is uniquely determined on generators and we let
\begin{align*}
\{f, g\} &= 0 ,\qquad f,g\in\cO(Y)\\
\{v, f\} &= v.f,\qquad f\in \cO(Y),\ v\in\Gamma(Y, \T_Y) \\
\{v, w\} &= [v, w],\qquad v,w\in\Gamma(Y, \T_Y)
\end{align*}
\label{ex:cotangentbundle}
\end{example}

Here is a slight reformulation of the above definition. A Poisson structure on $X$ gives rise to an antisymmetric biderivation on $\cO_X$, i.e. an element
\[\pi\in\Gamma(X, \wedge^2\T_X).\]

More generally, define
\[\Pol(X, 0) = \Gamma(X, \Sym(\T_X[-1])),\]
the graded commutative algebra of polyvector fields. We call the $\Sym$ grading the \defterm{weight} grading, e.g. vector fields have weight $1$ and bivectors have weight $2$. The algebra $\Pol(X, 0)$ has a Lie bracket of cohomological degree $-1$ called the \defterm{Schouten bracket} which is given by formulas similar to those of \cref{ex:cotangentbundle}.

\begin{prop}
A Poisson structure on $X$ is equivalent to the data of an element $\pi\in\Pol(X, 0)$ of weight $2$ satisfying
\[[\pi, \pi] = 0.\]
\label{prop:poissonjacobi}
\end{prop}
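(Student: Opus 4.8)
The plan is to factor the claimed equivalence into two independent steps: an identification of the underlying \emph{data} of a bracket with a bivector, and then a matching of the two integrability conditions, the Jacobi identity and $[\pi,\pi]=0$.

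First I would set aside the Jacobi identity and identify antisymmetric biderivations of $\cO_X$ with bivectors. A $k$-bilinear operation $\{-,-\}$ obeying the Leibniz rule is by definition a biderivation, and since $X$ is smooth the differentials $df$ locally generate $\Omega^1_X$, so the rule $df\otimes dg\mapsto\{f,g\}$ determines and is determined by a unique biderivation-valued section with $\{f,g\}=\pi(df,dg)$; antisymmetry of $\{-,-\}$ is precisely antisymmetry of $\pi$, so $\pi\in\Gamma(X,\wedge^2\T_X)$. Under the convention $\Pol(X,0)=\Gamma(X,\Sym(\T_X[-1]))$ the odd shift converts $\Sym^2$ into $\wedge^2$, so such a $\pi$ is exactly a weight-$2$ element of $\Pol(X,0)$. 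This yields a bijection between antisymmetric biderivations and weight-$2$ polyvectors, with no condition yet imposed on either side.

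It remains to show that under this bijection the Jacobi identity for $\{-,-\}$ corresponds to $[\pi,\pi]=0$. Here I would use that the Schouten bracket is the unique biderivation of $\Pol(X,0)$ of cohomological degree $-1$ extending the Lie bracket of vector fields, normalized exactly as in \cref{ex:cotangentbundle} by $[v,f]=v.f$ and $[f,g]=0$. The strategy is to compute $[\pi,\pi]\in\Gamma(X,\wedge^3\T_X)$ and compare it with the Jacobiator $\mathrm{Jac}(f,g,h)=\{f,\{g,h\}\}+\{g,\{h,f\}\}+\{h,\{f,g\}\}$.

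The main obstacle is this comparison, which I would perform in local coordinates $x_1,\dots,x_n$. Writing $\pi=\sum_{i<j}\pi^{ij}\,\partial_i\wedge\partial_j$, extending $\pi^{ji}=-\pi^{ij}$, so that $\{f,g\}=\sum_{i,j}\pi^{ij}\,\partial_if\,\partial_jg$, one expands $\mathrm{Jac}(f,g,h)$ and checks that every term containing a second derivative of $f$, $g$ or $h$ cancels after the cyclic sum; this cancellation is exactly why the Jacobiator is again a triderivation rather than a higher-order operator. What survives is
\[
\mathrm{Jac}(f,g,h)=\sum_{i,j,k}\sum_l\Big(\pi^{il}\,\partial_l\pi^{jk}+\pi^{jl}\,\partial_l\pi^{ki}+\pi^{kl}\,\partial_l\pi^{ij}\Big)\,\partial_if\,\partial_jg\,\partial_kh,
\]
and the coefficient in parentheses is, up to a nonzero scalar fixed by the normalization of the degree-$(-1)$ bracket, precisely the component of the trivector $[\pi,\pi]$. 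Since on the smooth locus the differentials separate points of $\wedge^3\T_X$, vanishing of $\mathrm{Jac}(f,g,h)$ for all $f,g,h$ is equivalent to $[\pi,\pi]=0$, which completes the equivalence. The only genuinely delicate points are the sign and scalar bookkeeping forced by the graded Leibniz rule for an odd bracket, together with the verification that the local expressions glue to the globally defined section $[\pi,\pi]$.
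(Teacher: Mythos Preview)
Your proposal is correct and follows the standard argument. Note, however, that the paper does \emph{not} supply a proof of this proposition: it is stated as a well-known fact and the text moves on immediately to the symplectic Lie algebroid reformulation. So there is no ``paper's own proof'' to compare against.

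That said, your two-step decomposition (first match antisymmetric biderivations with weight-$2$ polyvectors, then match the Jacobi identity with $[\pi,\pi]=0$ via a local coordinate computation of the Jacobiator) is exactly the classical proof one finds in any treatment of Poisson geometry, and the points you flag as delicate---the sign/scalar conventions forced by the degree $-1$ bracket on $\Sym(\T_X[-1])$ and the gluing of the local expression for $[\pi,\pi]$---are indeed the only places where care is needed.
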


One has yet another equivalent definition of a Poisson structure given as follows.

\begin{defn}
A \defterm{symplectic Lie algebroid} on $X$ is the data of a Lie algebroid $\cL$ over $X$ equipped with an isomorphism $\alpha\colon \cL\xrightarrow{\sim} \T^*_X$ satisfying
\begin{align}
\iota_{a(l)}\alpha(l) &= 0 \label{eq:sympliealgd1} \\
\alpha([l_1, l_2]) &= L_{a(l_1)} \alpha(l_2) - \iota_{a(l_2)}(\ddr \alpha(l_1)). \label{eq:sympliealgd2}
\end{align}
\end{defn}

\begin{prop}
A Poisson structure on $X$ is equivalent to the data of a symplectic Lie algebrod $\cL$ on $X$.
\label{prop:symplecticliealgd}
\end{prop}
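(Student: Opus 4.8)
The plan is to establish the equivalence $\pi \leftrightarrow (\cL, \alpha)$ by exhibiting explicit constructions in both directions and checking that the defining conditions correspond. I would work with the (anchored) bracket structure directly, since the problem is local and $X$ is smooth, so all the relevant sheaves $\T_X$, $\T^*_X$, $\wedge^2 \T_X$ are locally free and the constructions glue.

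First I would go from a Poisson structure to a symplectic Lie algebroid. Given $\pi \in \Gamma(X, \wedge^2 \T_X)$ with $[\pi,\pi]=0$ (using \cref{prop:poissonjacobi}), contraction with $\pi$ defines a bundle map $\pi^\sharp \colon \T^*_X \to \T_X$. I would take the underlying module of $\cL$ to be $\T^*_X$ itself, so that the isomorphism $\alpha$ is (tautologically) the identity, and define the anchor $a \colon \cL = \T^*_X \to \T_X$ to be $\pi^\sharp$. The Lie bracket on $\cL = \T^*_X$ is the classical Koszul (or Magri--Morosi) bracket, determined on exact forms by $[\ddr f, \ddr g] = \ddr\{f,g\} = \ddr(\pi(\ddr f, \ddr g))$ and extended by the Leibniz rule $[l_1, h\, l_2] = h[l_1,l_2] + (a(l_1).h)\, l_2$. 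The key computation is that the Jacobi identity for this bracket is equivalent to $[\pi,\pi]=0$; this is the standard fact that the Koszul bracket is Lie precisely when $\pi$ is Poisson. With $\alpha = \id$, condition \eqref{eq:sympliealgd1} reads $\iota_{\pi^\sharp(l)} l = 0$, which holds by antisymmetry of $\pi$, and condition \eqref{eq:sympliealgd2} becomes the Cartan-type formula that actually \emph{defines} the Koszul bracket, so it holds by construction.

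Conversely, given $(\cL, \alpha)$ I would transport the bracket and anchor along $\alpha$ to obtain a Lie bracket on $\T^*_X$ with anchor $a \circ \alpha^{-1} \colon \T^*_X \to \T_X$, which I then pair into a bivector: define $\pi(\omega_1, \omega_2) = \langle \omega_2, (a\circ\alpha^{-1})(\omega_1)\rangle$. Condition \eqref{eq:sympliealgd1} guarantees this is antisymmetric, so $\pi \in \Gamma(X, \wedge^2\T_X)$, and unwinding \eqref{eq:sympliealgd2} on exact forms recovers $\{f,g\} = \pi(\ddr f, \ddr g)$ with the correct bracket; the Jacobi identity on $\cL$ then forces $[\pi,\pi]=0$ via \cref{prop:poissonjacobi}. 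Finally I would note that the two assignments are mutually inverse, since both reduce to the single piece of data $\pi^\sharp$.

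The main obstacle is the bookkeeping in the bracket-to-bivector computation: one must check that conditions \eqref{eq:sympliealgd1}--\eqref{eq:sympliealgd2} are not merely consequences of a Poisson structure but are \emph{exactly} equivalent to it, i.e.\ that no information is lost or added. The delicate point is verifying that \eqref{eq:sympliealgd2}, which governs the anchor and bracket on all of $\T^*_X$ (not just exact forms), is forced by its restriction to exact one-forms together with the Leibniz rule; this rests on the fact that $\ddr f$ locally generate $\T^*_X$ over $\cO_X$ and that both sides are tensorial in the appropriate sense. Once this equivalence is pinned down, translating $[\pi,\pi]=0$ into the Jacobi identity for the Koszul bracket is the classical Lichnerowicz computation, which I would invoke rather than reproduce.
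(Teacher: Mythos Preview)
Your proposal is correct and follows essentially the same route as the paper: both take $\cL=\T^*_X$ with $\alpha=\id$, anchor $a=\pi^\sharp$, and the Koszul bracket, and in the converse direction both read off $\pi$ from the composite $\T^*_X\cong\cL\xrightarrow{a}\T_X$, identifying \eqref{eq:sympliealgd1} with antisymmetry and \eqref{eq:sympliealgd2} with the Jacobi identity. The only presentational difference is that the paper writes the Koszul bracket explicitly as $[l_1,l_2]=L_{a(l_1)}l_2-L_{a(l_2)}l_1-\ddr\pi(l_1,l_2)$ on all one-forms, whereas you pin it down on exact forms and extend by the Leibniz rule; these are equivalent descriptions, and the paper likewise leaves the detailed verifications to the reader.
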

\begin{proof}
Suppose $X$ has a Poisson structure $\pi$. Let $\cL = \T^*_X$. Define the anchor $a\colon \cL\rightarrow \T_X$ to be $l\mapsto \iota_l(\pi)$. Given two differential forms $l_1, l_2\in\cL$ we define the bracket (the so-called \emph{Koszul bracket}) to be
\[[l_1, l_2] = L_{a(l_1)} l_2 - L_{a(l_2)} l_1 - \ddr \pi(l_1, l_2).\]
We leave it to the reader to check that $\cL$ is a symplectic Lie algebroid.

Conversely, given a symplectic Lie algebroid $\cL$, the composite
\[\T^*_X\cong \cL\xrightarrow{a} \T_X\]
defines an element $\pi\in\Gamma(X, \T_X\otimes \T_X)$. The equation \eqref{eq:sympliealgd1} is equivalent to the antisymmetry of $\pi$ and equation \eqref{eq:sympliealgd2} is equivalent to the Jacobi identity for $\pi$.
\end{proof}

To summarize, we have the following three definitions of a Poisson structure on a smooth scheme $X$:
\begin{enumerate}
\item As an enhancement of the structure sheaf of $X$ to a sheaf of Poisson algebras. The derived version is given by \cref{defn:nshiftedPoisson}.

\item As a bivector satisfying the Jacobi identity. The derived version is given by \cref{prop:poissonpolyvectors}.

\item As a symplectic Lie algebroid. We refer to \cref{conj:sympliealgd} for its derived version.
\end{enumerate}

\subsection{Shifted Poisson algebras}

We are now going to explain what \emph{higher} Poisson structures are. We will discuss only the affine case deferring to \cref{sect:globalization} for the case of derived stacks.

One can generalize the notion of a Poisson algebra to the dg setting by considering a commutative dg algebra equipped with a bracket satisfying the Leibniz rule and graded antisymmetry. Since we have a grading, it is also natural to consider a generalization where we allow the degree of the bracket to be arbitrary.

\begin{defn}
A \defterm{$\bP_n$-algebra} is a commutative dg algebra $A$ over $k$ equipped with a bracket
\[\{-,-\}\colon A\otimes_k A\rightarrow A[1-n]\]
which gives a Lie bracket on $A[n-1]$ satisfying
\[\{f, gh\} = \{f, g\}h + (-1)^{|g||h|}\{f, h\} g,\qquad \forall f,g,h\in A.\]
\end{defn}

Our next goal is to study all ways of endowing a commutative dg algebra $A$ with a $\bP_n$-bracket. We can consider the set of such lifts which we temporarily denote by $\widetilde{\Pois}(A, n-1)$. It is clear that an \emph{isomorphism} of commutative dg algebras $A_1\cong A_2$ induces an isomorphism of sets $\widetilde{\Pois}(A_1, n-1)\cong \widetilde{\Pois}(A_2, n-1)$.

In derived algebraic geometry we are also interested in the behavior of properties under \emph{quasi-isomorphisms} $A_1\rightarrow A_2$ and in this case there is no obvious relation between the sets $\widetilde{\Pois}(A_1, n-1)$ and $\widetilde{\Pois}(A_2, n-1)$. Nevertheless, by the homotopy transfer theorem \cite[Section 10.3]{LV} given a $\bP_n$-algebra structure on $A_1$ we can transfer it to a \emph{homotopy} $\bP_n$-algebra structure on $A_2$ which is uniquely defined modulo $\infty$-quasi-isomorphisms. Thus, we should replace $\widetilde{\Pois}(A, n-1)$ by the set of $\infty$-quasi-isomorphism classes of homotopy $\bP_n$-structures on $A$ which are compatible with the given commutative algebra structure on $A$. Instead of passing to equivalence classes we will construct an $\infty$-groupoid $\Pois(A, n-1)$ whose objects are homotopy $\bP_n$-structures on $A$ and (higher) morphisms are given by (higher) homotopies. Such an $\infty$-groupoid can be constructed quite explicitly by hand; instead, we will define it in a more concise way using the $\infty$-category of $\bP_n$-algebras.

We will now switch to the language of dg operads for which the reader is referred to \cite{LV}. The above definition of a $\bP_n$-algebra gives a quadratic dg operad $\bP_n$ generated by two binary operations: multiplication of degree $0$ and bracket of degree $1-n$, so that a $\bP_n$-algebra is an algebra over the dg operad $\bP_n$.

\begin{notation}
Let $\cO$ be a dg operad such as the associative operad $\Ass$ or the commutative operad $\Comm$.
\begin{itemize}
\item We denote by $\alg_{\cO}$ the category of $\cO$-algebras in complexes. We also introduce the notations
\[\alg = \alg_{\Ass},\qquad \calg = \alg_{\Comm}\]
for the categories of associative and commutative dg algebras.

\item Let $W\subset \alg_{\cO}$ be the wide subcategory consisting of quasi-isomorphisms, i.e. those morphisms of $\cO$-algebras $A_1\rightarrow A_2$ which induce isomorphisms $\H^\bullet(A_1)\rightarrow \H^\bullet(A_2)$.
\end{itemize}
\end{notation}

\begin{defn}
The \defterm{$\infty$-category of $\cO$-algebras} is the localization
\[\ialg_{\cO} = \alg_{\cO}[W^{-1}].\]
\end{defn}

Similarly, we have
\[\ialg = \ialg_{\Ass},\qquad \icalg = \ialg_{\Comm}.\]

We have a functor $\alg_{\bP_n}\rightarrow \calg$ given by forgetting the bracket which after localization induces a functor of $\infty$-categories
\[\ialg_{\bP_n}\rightarrow \icalg.\]

Given an $\infty$-category $\cC$ we denote by $\cC^\sim$ the underlying $\infty$-groupoid of objects where we throw away non-invertible morphisms.

\begin{defn}
Let $A$ be a commutative dg algebra. The \defterm{$\infty$-groupoid $\Pois(A, n)$ of $n$-shifted Poisson structures on $A$} is the fiber of the forgetful map
\[\ialg_{\bP_{n+1}}^{\sim}\longrightarrow \icalg^{\sim}\]
at $A\in\icalg^{\sim}$.
\end{defn}

In other words, an object of $\Pois(A, n)$ is given by a pair of a $\bP_{n+1}$-algebra $\tilde{A}$ and a quasi-isomorphism of commutative dg algebras $\tilde{A}\xrightarrow{\sim} A$.

\begin{remark}
We will use the terms ``$\infty$-groupoid'' and ``space'' interchangeably as both will refer to objects of equivalent $\infty$-categories. In the model of $\infty$-categories as simplicial categories, we can take the relevant $\infty$-category $\cS$ to be that of Kan complexes.
\end{remark}

\begin{remark}
The same definition of the $\infty$-groupoid of $n$-shifted Poisson structures can be given for a commutative algebra in a general enough symmetric monoidal dg category $\cM$ (see \cite[Section 1.1]{CPTVV} for precise assumptions). This will be used to extend this notion to the non-affine setting in \cref{sect:globalization}.
\label{remark:generalmodelcategory}
\end{remark}

\subsection{Additivity}
\label{sect:additivity}

A natural question is whether $\bP_n$-algebras for different $n$ are related in some way. To explain the relation, let us recall the following result.

Let $\bE_n$ the topological operad of little $n$-disks. That is, the space of $m$-ary operations $\bE_n(m)$ is the space of rectilinear embeddings of $m$ $n$-dimensional disks into a fixed $n$-dimensional disk. For instance, in the case $n=1$ $\bE_1(m)$ is the space of embeddings of $m$ intervals $[0, 1]$ into $[0, 1]$. Such a space retracts to $S_m$ and hence the operad $\bE_1$ is equivalent to the associative operad. Thus, an $\bE_1$-algebra in complexes is simply an associative dg algebra. In the opposite limit the space $\bE_\infty(m)$ is weakly contractible, so $\bE_\infty$ is equivalent to the commutative operad and an $\bE_\infty$-algebra in complexes is a commutative dg algebra.

Given any symmetric monoidal $\infty$-category $\cC$ we can consider the $\infty$-category $\ialg_{\bE_n}(\cC)$ of $\bE_n$-algebras in $\cC$. The following is \cite[Theorem 5.1.2.2]{HA}.

\begin{thm}[Dunn--Lurie]
One has an equivalence of symmetric monoidal $\infty$-categories
\[\ialg_{\bE_{n+m}}(\cC)\cong \ialg_{\bE_n}(\ialg_{\bE_m}(\cC)).\]
\label{thm:dunnlurieadditivity}
\end{thm}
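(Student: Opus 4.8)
The plan is to split the statement into a formal part about tensor products of $\infty$-operads and a geometric part identifying the little disks operads. Throughout, let $\otimes$ denote the Boardman--Vogt tensor product, which makes the $\infty$-category of $\infty$-operads symmetric monoidal. The whole theorem then follows from two assertions: (i) for any pair of $\infty$-operads $\cO$, $\cO'$ one has $\ialg_{\cO \otimes \cO'}(\cC) \simeq \ialg_{\cO}(\ialg_{\cO'}(\cC))$, and (ii) the little disks operads satisfy $\bE_{n+m} \simeq \bE_n \otimes \bE_m$. Applying (i) with $\cO = \bE_n$ and $\cO' = \bE_m$ and substituting (ii) yields the equivalence, and the symmetric monoidal refinement is automatic because every construction in sight is compatible with $\otimes$.

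Step (i) is purely formal and carries no geometric content. The tensor product is built so as to corepresent bilinear maps of $\infty$-operads: a bifunctor $\cO \times \cO' \to \mathcal{P}$ is the same datum as a single map $\cO \otimes \cO' \to \mathcal{P}$. Given a symmetric monoidal $\infty$-category $\cC$, the $\infty$-category $\ialg_{\cO'}(\cC)$ inherits a (pointwise) symmetric monoidal structure, so that $\ialg_{\cO}(\ialg_{\cO'}(\cC))$ is defined. Unwinding the universal property, a map $\cO \otimes \cO' \to \mathrm{End}(\cC)$ into the endomorphism operad of $\cC$ corresponds to a bilinear map, which in turn is the same as a map $\cO \to \mathrm{End}(\ialg_{\cO'}(\cC))$; passing to algebra objects gives the asserted equivalence. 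I would simply cite the construction and universal property of $\otimes$ and assemble this chain of identifications.

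Step (ii) is the geometric heart and the one I expect to be the main obstacle. First I would produce the comparison map $\bE_n \otimes \bE_m \to \bE_{n+m}$: it arises from the bifunctor $\bE_n \times \bE_m \to \bE_{n+m}$ induced by the fact that the product of a rectilinear embedding of disks in $\mathbb{R}^n$ with one in $\mathbb{R}^m$ is again a rectilinear embedding of disks, now in $\mathbb{R}^{n+m}$. The content of the theorem is that this map is an equivalence of $\infty$-operads. Because $\otimes$ is symmetric and associative, it suffices to treat the case $m = 1$, that is, to prove $\bE_1 \otimes \bE_n \simeq \bE_{n+1}$ and then to identify $\bE_n \simeq \bE_1^{\otimes n}$ by induction.

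The difficulty is that this last equivalence cannot be checked levelwise: the space of $k$-ary operations of $\bE_{n+m}$, a configuration space of $k$ points in $\mathbb{R}^{n+m}$, is not the product of the corresponding configuration spaces for $\bE_n$ and $\bE_m$, so the Boardman--Vogt tensor product genuinely reshuffles the operation spaces. To overcome this I would follow Dunn's interchange argument transported to the $\infty$-operadic setting: work with explicit little cubes models, use the unital structure to control degeneracies, and run an inductive approximation scheme showing that the comparison map induces equivalences on the relevant mapping spaces. This homotopy-theoretic verification is where essentially all the work lies; step (i) and the reduction in (ii) are formal by comparison.
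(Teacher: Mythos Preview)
The paper does not prove this theorem at all: it is stated and attributed to Dunn and Lurie with a citation to \cite[Theorem 5.1.2.2]{HA}, and no argument is given. So there is no proof in the paper against which to compare your proposal.

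That said, your outline is a reasonable high-level summary of the strategy actually carried out in \cite{HA}. A few comments on accuracy. Your step~(i) is correct in spirit, but in Lurie's treatment the identification $\ialg_{\cO\otimes\cO'}(\cC)\simeq\ialg_{\cO}(\ialg_{\cO'}(\cC))$ is not derived from an ``endomorphism operad'' description; rather it follows from the construction of the tensor product via bifunctors of $\infty$-operads and the identification of algebras in $\ialg_{\cO'}(\cC)$ with such bifunctors. Your step~(ii) correctly identifies the geometric core and the reduction to $m=1$; the actual argument in \cite{HA} does not quite run Dunn's original interchange argument but instead uses a locally constant factorization-style approximation (via the $\infty$-operads $\mathbb{E}_n^{\otimes}$ modeled on rectilinear embeddings and a careful analysis with Lurie's wreath product and $\infty$-categorical cofinality arguments). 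Your description of the difficulty---that the comparison cannot be checked on operation spaces levelwise---is exactly right, and ``essentially all the work lies here'' is a fair assessment.

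In short: your proposal is a plausible roadmap to the theorem, but since the paper merely cites the result, the appropriate response here is simply to invoke \cite[Theorem 5.1.2.2]{HA} rather than to reproduce its proof.
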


Let $R$ be a commutative dg algebra over $k$ and denote by $\imod_R$ the $\infty$-category of dg $R$-modules defined as
\[\imod_R = \mod_R[W^{-1}].\]
We can also consider the $\infty$-category of $\bP_n$-algebras in $\imod_R$ by defining
\[\ialg_{\bP_n}(\imod_R) = \alg_{\bP_n}(\mod_R)[W^{-1}],\]
where $\mod_R$ is the dg category of $R$-modules. The following is \cite[Theorem 2.22]{Sa2} and was previously proved independently by Rozenblyum (unpublished).

\begin{thm}
One has an equivalence of symmetric monoidal $\infty$-categories
\[\ialg_{\bP_{n+m}}(\imod_R)\cong \ialg_{\bE_n}(\ialg_{\bP_m}(\imod_R)).\]
\label{thm:poissonadditivity}
\end{thm}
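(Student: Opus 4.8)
The plan is to relate the Poisson operad $\bP_{n+m}$ to iterated little-disks structures via a Koszul-duality / filtration argument, mirroring the Dunn--Lurie additivity theorem at the level of $\bP_n$-algebras rather than $\bE_n$-algebras. The key observation is that the operad $\bP_{n+1}$ is, up to the appropriate shift, the Poisson-theoretic analog of $\bE_{n+1}$, and the additivity $\bE_{n+m}\cong \bE_n \circ \bE_m$ should descend to a statement about $\bP$-operads after passing to associated graded or by using the fact that $\bP_n$ is the homology of $\bE_n$ (for $n\geq 2$). Concretely, I would first reduce to understanding the relationship between $\bP_{n+m}$-algebras and $\bE_n$-algebra objects in $\bP_m$-algebras by constructing a comparison functor and checking it is an equivalence.

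\emph{First I would} set up the formal framework. The category $\ialg_{\bE_n}(\ialg_{\bP_m}(\imod_R))$ makes sense because $\ialg_{\bP_m}(\imod_R)$ is a symmetric monoidal $\infty$-category (with monoidal structure coming from the coproduct of $\bP_m$-algebras, or the underlying tensor product of $R$-modules), so that $\bE_n$-algebras therein are well-defined. The heart of the argument is to produce a map of operads (or a functor of $\infty$-categories) in one direction and verify it induces an equivalence. The cleanest route is to use the Poisson additivity at the level of the Koszul dual cooperads or via the explicit formality/filtration of the $\bE_n$-operad: since we are in characteristic zero, the operad $\bE_n$ is formal, i.e. equivalent to its homology operad $\H_\bullet(\bE_n)$, which for $n\geq 2$ is the $n$-Poisson operad $\bP_n$ (after a suitable shift convention). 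Applying Dunn--Lurie additivity (\cref{thm:dunnlurieadditivity}) and then passing to homology should produce the desired equivalence, but one must be careful that taking homology commutes with the relevant constructions.

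\emph{The hard part will be} the interchange of the additivity structure with the passage from $\bE$ to $\bP$, in particular controlling what happens at the edge cases $m=0$ or $m=1$ where the homology of $\bE_m$ is \emph{not} simply $\bP_m$ (for $m=1$, $\bE_1$ gives the associative rather than a Poisson operad, and $\bP_1$ is dg Poisson). The formality of $\bE_n$ is an equivalence of operads only for $n\geq 2$, so the naive ``take homology'' strategy breaks down precisely where one most wants the theorem. This is why the statement works relative to the base $\imod_R$ and for $\bP$ on both sides: one needs the $\bP_m$-structure to be genuinely present rather than an artifact of formality. I expect the resolution to require either a direct operadic computation showing $\bP_{n+m}\simeq \bE_n \circ_{\mathrm{BV}} \bP_m$ via an explicit map on generators and relations (multiplication, the degree-$(1-n-m)$ bracket decomposing into an $\bE_n$-type product of the $\bP_m$-bracket), or a bar-cobar / Koszul-duality argument showing the two cooperads agree. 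The verification that the comparison functor is an equivalence — rather than merely a map — will likely hinge on checking it on free algebras and using that both sides are generated under colimits by free objects, reducing the claim to an isomorphism of the underlying graded operads, which can then be settled by a dimension count or an explicit identification of the operadic composition with the shifted Schouten-type bracket.
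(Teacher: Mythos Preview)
Your proposal has a genuine gap at exactly the point you flag: the formality route via $\bE_n\simeq\bP_n$ only works for $n\geq 2$, so it cannot produce the base case $n=1$ of the additivity statement, namely $\ialg_{\bP_{m+1}}\cong\ialg(\ialg_{\bP_m})$. But this is precisely the case the paper proves directly, and the general statement then follows by iterating it together with Dunn--Lurie. So your strategy inverts the logical order: you try to deduce Poisson additivity from $\bE$-additivity plus formality, whereas the paper establishes the $n=1$ step by an argument intrinsic to $\bP$-algebras and only afterwards (e.g.\ in \cref{prop:poissonforget2}) invokes formality to relate things back to $\bE_2$. Your fallback suggestion of ``a bar-cobar / Koszul-duality argument'' is pointed in the right direction but misses the specific idea that makes it work.

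The paper's construction goes through an intermediate category of \emph{shifted Lie bialgebras}. One first sets up a bar-cobar adjunction $\Omega\dashv\B$ between $\bP_{n+1}$-algebras and $(n-1)$-shifted Lie bialgebras (conilpotent as Lie coalgebras): for $\g\in\bialg_{\Lie_{n-1}}$ the cobar construction is $\Omega\g=\Sym(\g[-n])$ with Chevalley--Eilenberg differential, and the Lie bracket on $\g$ induces the $\bP_{n+1}$-bracket. This adjunction induces an equivalence on underlying $\infty$-categories. The second step is that the universal enveloping algebra upgrades to an equivalence $\U\colon\bialg_{\Lie_{n-1}}\xrightarrow{\sim}\alg(\coalg_{\bP_n})$, since the Lie cobracket on $\g$ induces a $\bP_n$-cobracket on $\U(\g)$. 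Composing $\B$ with $\U$ and passing to $\infty$-categories (using a second bar-cobar equivalence between $\bP_n$-algebras and $\bP_n$-coalgebras) yields the functor $\ialg_{\bP_{n+1}}\to\ialg(\ialg_{\bP_n})$. None of this uses formality, and it treats $m=1$ on the same footing as all other cases. Incidentally, the symmetric monoidal structure on $\ialg_{\bP_m}$ you need is the one coming from the underlying tensor product of $R$-modules (available because $\bP_m$ is a Hopf operad), not the coproduct.
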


\begin{remark}
The statement in \cite{Sa2} was proved in the case $R=k$, but the proof can be generalized to any commutative dg algebra over $k$.
\end{remark}

Let us sketch the construction of the equivalence in \cref{thm:poissonadditivity}. The reader unfamiliar with the theory of operads can safely skip to the corollaries of the construction (\cref{prop:poissonlinearization} and \cref{prop:poissonforget2}).

Given a Lie algebra $\g$ and an element $x\in\g$ we denote by $\ad_x\colon \g\rightarrow \g$ the adjoint action $[x, -]$.

\begin{defn}
An \defterm{$n$-shifted Lie bialgebra} is a dg Lie algebra $\g$ equipped with a degree $-n$ Lie cobracket $\delta\colon \g\otimes \g\rightarrow \g[-n]$ satisfying the relation
\[\delta([x, y]) = (\ad_x\otimes \id + \id\otimes \ad_x)\delta(y) - (-1)^{n + |x||y|}(\ad_y\otimes\id + \id\otimes\ad_y)\delta(x)\]
for any $x,y\in\g$.
\end{defn}

\begin{defn}
A dg Lie coalgebra $\g$ is \defterm{conilpotent} if for every $x\in\g$ there is an $N$ so that the $N$-fold application of $\delta$ to $x$ gives zero.
\end{defn}

Denote by $\bialg_{\Lie_n}$ the category of $n$-shifted Lie bialgebras conilpotent as Lie coalgebras. Then one has a bar-cobar adjunction
\[\adj{\Omega\colon \bialg_{\Lie_{n-1}}}{\alg_{\bP_{n+1}}\colon \B}\]
constructed as follows. Suppose $\g$ is an $(n-1)$-shifted Lie bialgebra. Then $A=\Sym(\g[-n])$ is endowed with the Chevalley--Eilenberg differential coming from the Lie coalgebra structure on $\g$. Moreover, the Lie bracket on $\g$ induces a $\bP_{n+1}$-structure on $A$. Given an $R$-module $V$ we denote by $\coLie(V)$ the cofree conilpotent Lie coalgebra cogenerated by $V$.

If $A$ is a $\bP_{n+1}$-algebra we can endow $\coLie(A[1])[n-1]$ with the Harrison differential and an $(n-1)$-shifted Lie bialgebra structure, see \cite[Section 2.4]{Sa2} for details. We call the functor $\Omega$ the cobar construction and $\B$ the bar construction.

We also have another bar-cobar adjunction
\[\adj{\coalg_{\bP_{n+1}}}{\alg_{\bP_{n+1}}}\]
between $\bP_{n+1}$-coalgebras and $\bP_{n+1}$-algebras constructed in a similar way.

Both categories $\bialg_{\Lie_{n-1}}$ and $\coalg_{\bP_{n+1}}$ have a class of weak equivalences which are those morphisms which become quasi-isomorphisms after applying $\Omega$.

\begin{lm}
The bar-cobar adjunctions
\[\adj{\Omega\colon \bialg_{\Lie_{n-1}}}{\alg_{\bP_{n+1}}\colon \B}\]
and
\[\adj{\coalg_{\bP_{n+1}}}{\alg_{\bP_{n+1}}}\]
induce equivalences on the underlying $\infty$-categories.
\end{lm}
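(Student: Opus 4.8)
The plan is to treat both adjunctions by the standard mechanism of operadic Koszul duality, the only nontrivial input being the acyclicity of the bar-cobar resolution. Write $\Omega$ and $\B$ for either pair of cobar and bar functors under consideration. By construction, the weak equivalences on $\bialg_{\Lie_{n-1}}$ (resp.\ on $\coalg_{\bP_{n+1}}$) are precisely the morphisms inverted by $\Omega$, so $\Omega$ sends weak equivalences to quasi-isomorphisms tautologically and therefore descends to a functor $\widetilde{\Omega}$ of the underlying $\infty$-categories. The goal is to promote $\widetilde{\Omega}$ to an equivalence with quasi-inverse induced by $\B$.

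The key step is to show that the counit $\Omega\B A\to A$ is a quasi-isomorphism for every $\bP_{n+1}$-algebra $A$. For the operadic adjunction between $\coalg_{\bP_{n+1}}$ and $\alg_{\bP_{n+1}}$ this is exactly the Koszulity of the operad $\bP_{n+1}$: since $\bP_{n+1}$ is assembled from $\Comm$ and a shift of $\Lie$ by a distributive law and each factor is Koszul, the bar-cobar construction $\Omega\B A$ is a resolution of $A$, see \cite[Chapter 11]{LV}. For the Lie-bialgebra adjunction the same conclusion holds because $\Omega\B A$ is the complex $\Sym(\g[-n])$ with $\g=\coLie(A[1])[n-1]$ equipped with the combined Chevalley--Eilenberg and Harrison differentials, whose acyclicity is the Poincar\'{e}--Birkhoff--Witt / Koszul-duality statement for the pair $(\Comm,\Lie)$; we refer to the explicit construction in \cite[Section 2.4]{Sa2}.

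Granting acyclicity of the counit, the remainder is formal. First, $\B$ preserves weak equivalences: for a quasi-isomorphism $A\to A'$ the naturality square of the counit together with the two-out-of-three property forces $\Omega\B A\to\Omega\B A'$ to be a quasi-isomorphism, i.e.\ $\B A\to\B A'$ is a weak equivalence, so $\B$ descends to a functor $\widetilde{\B}$. Next, the unit $C\to\B\Omega C$ is a weak equivalence: by the triangle identity, applying $\Omega$ yields a section of the counit $\Omega\B(\Omega C)\to\Omega C$, which is a quasi-isomorphism, so $C\to\B\Omega C$ is inverted by $\Omega$ and hence is a weak equivalence by definition. The unit and counit then descend to natural transformations between the localized functors, exhibiting $\widetilde{\Omega}$ and $\widetilde{\B}$ as an adjunction of $\infty$-categories with invertible unit and counit, hence as an adjoint equivalence.

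I expect the acyclicity of the counit to be the main obstacle; everything downstream of it is a formal consequence of the two-out-of-three property and the definition of the weak equivalences. In the operadic case the obstacle is packaged as Koszulity of $\bP_{n+1}$, which can be deduced from the Koszulity of $\Comm$ and shifted $\Lie$ via the distributive law presentation of the Poisson operad; in the Lie-bialgebra case one must instead verify directly that $\Sym(\g[-n])$ with the combined differential resolves $A$, which is where the conilpotency hypothesis enters to guarantee convergence of the relevant filtrations.
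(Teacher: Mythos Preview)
The paper does not supply a proof of this lemma; it is stated without argument, the surrounding discussion merely setting up the adjunctions and deferring implicitly to the Koszul-duality literature (notably \cite{LV} and \cite[Section 2.4]{Sa2}). So there is no ``paper's own proof'' to compare against.

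Your outline is the standard one and is correct. The logical skeleton---counit acyclicity $\Rightarrow$ $\B$ preserves weak equivalences $\Rightarrow$ unit is a weak equivalence via the triangle identity $\Rightarrow$ localized adjunction is an equivalence---is exactly how such statements are established. Two minor remarks. First, for the $\coalg_{\bP_{n+1}}\leftrightarrows\alg_{\bP_{n+1}}$ adjunction one should be explicit that the relevant cooperad is the Koszul dual $\bP_{n+1}^{\text{!`}}$, which is $\bP_{n+1}$ up to an operadic suspension; this is what makes ``$\bP_{n+1}$-coalgebras'' the correct source category and is implicit in the paper's phrasing. Second, the passage from a $1$-categorical adjunction with both adjoints preserving weak equivalences to an $\infty$-categorical adjunction after localization is genuinely a step (localization is only $2$-functorial on functors preserving weak equivalences, and one must check the triangle identities survive), but it is routine and follows, for instance, from the fact that both categories admit compatible model structures in which these are Quillen equivalences, or directly from the $2$-functoriality of Dwyer--Kan localization. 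Your identification of the counit acyclicity as the only substantive point, with the rest formal, is accurate.
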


Recall that given a Lie algebra $\g$ its universal enveloping algebra $\U(\g)$ is a cocommutative bialgebra. Moreover, we have an equivalence of categories
\[\alg_{\Lie}\xrightarrow{\U} \alg(\coalg_{\Comm}),\]
where $\alg(-)$ is the category of associative algebras and $\alg(\coalg_{\Comm})$ is the category of conilpotent cocommutative bialgebras. Similarly, if $\g$ is an $(n-1)$-shifted Lie bialgebra, $\U(\g)$ inherits a natural $\bP_n$-cobracket which induces an equivalence of categories
\[\bialg_{\Lie_{n-1}}\xrightarrow{\U} \alg(\coalg_{\bP_n}).\]

The composite
\[\alg_{\bP_{n+1}} \xrightarrow{\B} \bialg_{\Lie_{n-1}} \xrightarrow{\U} \alg(\coalg_{\bP_n})\]
after passing to underlying $\infty$-categories induces the required functor
\begin{equation}
\ialg_{\bP_{n+1}}\longrightarrow \ialg(\ialg_{\bP_n}).
\label{eq:poissonadditivity}
\end{equation}

Note that the category $\alg_{\bP_{n+1}}$ has an involution given by flipping the sign of the bracket. Under the bar complex $\B$ it goes to the same involution of $\bialg_{\Lie_{n-1}}$. Applying the universal enveloping algebra $\U$ it goes to the involution of $\alg(\coalg_{\bP_n})$ given by passing to the opposite algebra.

\begin{remark}
The $\infty$-category $\ialg(\ialg_{\bP_n})$ has two involutions: passing to the opposite associative algebra and passing to the opposite $\bP_n$-algebra. The antipode of $\U(\g)$ gives a natural isomorphism of the two involutions of $\ialg(\ialg_{\bP_n})$.
\end{remark}

\begin{prop}
The Poisson additivity equivalence \eqref{eq:poissonadditivity} intertwines the involution of $\ialg_{\bP_{n+1}}$ given by flipping the sign of the bracket and the involution of $\ialg(\ialg_{\bP_n})$ given by passing to the opposite algebra.
\label{prop:oppositepoisson}
\end{prop}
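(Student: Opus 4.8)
The plan is to verify the asserted equivariance at the strict (pre-localization) level by tracking the sign-flip involution through each of the two functors that compose the additivity construction
\[\alg_{\bP_{n+1}} \xrightarrow{\B} \bialg_{\Lie_{n-1}} \xrightarrow{\U} \alg(\coalg_{\bP_n}),\]
and then to observe that all identifications are compatible with the relevant weak equivalences, so they descend to the underlying $\infty$-categories. First I would record that $\{-,-\}\mapsto -\{-,-\}$ really is an involution of $\alg_{\bP_{n+1}}$: the graded Jacobi identity and the Leibniz rule are each homogeneous of degree one in the bracket (equivalently, linear in it after fixing the other arguments), hence invariant under rescaling the bracket by $-1$, while the commutative multiplication is left untouched.

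Second, I would examine the bar construction on the object $\g=\coLie(A[1])[n-1]$. Its Lie cobracket is the Harrison differential, built solely from the commutative multiplication of $A$, whereas its Lie bracket is built solely from the Poisson bracket of $A$. Thus flipping the sign of $\{-,-\}$ flips the sign of the Lie bracket of $\g$ and fixes the cobracket $\delta$; one checks the $(n-1)$-shifted bialgebra compatibility relation survives, since both of its sides change sign simultaneously under $[x,y]\mapsto -[x,y]$, $\ad_x\mapsto -\ad_x$. Hence $\B$ intertwines the bracket-flip of $\alg_{\bP_{n+1}}$ with the Lie-bracket-flip $\g\mapsto\g^-$ of $\bialg_{\Lie_{n-1}}$, where $\g^-$ carries the negated bracket and the unchanged cobracket $\delta$ (and remains conilpotent, as conilpotency refers only to $\delta$).

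Third, I would invoke the universal property of the enveloping algebra. The inclusion $\g\hookrightarrow \U(\g)^{\opp}$ sends the bracket of $\g$ to $yx-xy=-[x,y]$, so it is a Lie map $\g^-\to\U(\g)^{\opp}$ and induces an isomorphism of associative algebras $\U(\g^-)\xrightarrow{\sim}\U(\g)^{\opp}$ which is the identity on generators. Because it is the identity on the primitive generators $\g$, it respects both the cocommutative coproduct (generators stay primitive) and the $\bP_n$-cobracket (induced from $\delta$, which is unchanged under $\g\mapsto\g^-$); the associative product is exactly the coefficient-wise datum being reversed. Therefore it is an isomorphism in $\alg(\coalg_{\bP_n})$ realizing $\g^-$ as the opposite-associative-multiplication of $\U(\g)$, and the composite $\U\circ\B$ carries bracket-flip to opposite-algebra.

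Finally I would note that bracket-flip preserves quasi-isomorphisms of $\bP_{n+1}$-algebras and the opposite-algebra functor preserves the weak equivalences of $\alg(\coalg_{\bP_n})$, so both involutions descend to the localizations and the strict natural isomorphism above induces the claimed equivariance of \eqref{eq:poissonadditivity} on underlying $\infty$-categories. The main obstacle I expect is precisely the third step's bookkeeping: one must confirm that the classical identification $\U(\g^-)\cong\U(\g)^{\opp}$ is compatible with the \emph{entire} $\bP_n$-coalgebra structure and not merely with the associative product, and one must reconcile the ``opposite associative algebra'' involution arising here with the ``opposite $\bP_n$-algebra'' involution via the antipode, as indicated in the preceding remark; this reconciliation is what renders the statement insensitive to which of the two involutions on $\ialg(\ialg_{\bP_n})$ one fixes.
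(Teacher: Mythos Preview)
Your proposal is correct and follows essentially the same approach as the paper. The paper does not give a formal proof of this proposition; the argument is contained in the paragraph immediately preceding it, which asserts exactly your two steps: under $\B$ the sign-flip of the $\bP_{n+1}$-bracket becomes the sign-flip of the Lie bracket on $\bialg_{\Lie_{n-1}}$, and under $\U$ this becomes passage to the opposite associative algebra. Your write-up simply expands this sketch with the expected verifications (homogeneity of Jacobi/Leibniz in the bracket, the classical isomorphism $\U(\g^-)\cong\U(\g)^{\opp}$, compatibility with the coalgebra structures, descent to localizations), and you correctly flag the antipode issue from the paper's remark.

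One minor wording issue: in your second step you write ``Its Lie cobracket is the Harrison differential.'' Strictly, on $\coLie(A[1])[n-1]$ the Lie cobracket is the cofree one and the Harrison construction supplies the \emph{differential}; both are built from the commutative multiplication alone, so your conclusion that neither depends on $\{-,-\}$ is unaffected, but you should phrase it as ``the dg Lie coalgebra structure (cofree cobracket plus Harrison differential) depends only on the commutative multiplication'' to avoid confusion.
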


Let us present some corollaries of this construction of the additivity functor. Observe that the additivity functor gives rise to a forgetful functor
\[\forget_{n+1}^n\colon \ialg_{\bP_{n+1}}\xrightarrow{\sim} \ialg(\ialg_{\bP_n})\longrightarrow \ialg_{\bP_n}\]
which has the following properties.

\begin{prop}
Let $\g$ be an $(n-1)$-shifted Lie bialgebra and $A=\Omega\g$ the associated $\bP_{n+1}$-algebra. Denote by $A_0\in\ialg_{\bP_n}$ the same commutative dg algebra with the trivial bracket. Then an equivalence of $\bP_n$-algebras $\forget_{n+1}^n(A)\cong A_0$ is the same as an equivalence of $\bP_n$-coalgebras $\U(\g)\cong \Sym(\g)$.
\label{prop:poissonlinearization}
\end{prop}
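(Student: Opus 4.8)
The plan is to trace both $\forget_{n+1}^n(A)$ and $A_0$ through the equivalence $\ialg_{\bP_n}\simeq\coalg_{\bP_n}$ that is used to build the target $\ialg(\ialg_{\bP_n})$ of the additivity functor, and to check that they are represented by the $\bP_n$-coalgebras $\U(\g)$ and $\Sym(\g)$ respectively. Once this object-level identification is in place the statement is formal: a bijection (indeed an equivalence of the spaces of isomorphisms) is induced by the equivalence of $\infty$-categories, so an equivalence of $\bP_n$-algebras on one side is the same datum as an equivalence of $\bP_n$-coalgebras on the other.

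First I would identify $\forget_{n+1}^n(A)$. By construction the additivity functor \eqref{eq:poissonadditivity} is $\B$ followed by $\U$, landing in $\alg(\coalg_{\bP_n})$, and $\forget_{n+1}^n$ post-composes with the functor that forgets the outer associative multiplication. Since $A=\Omega\g$ and bar-cobar induces an equivalence on underlying $\infty$-categories, we have $\B(A)\simeq\g$, so the additivity image of $A$ is $\U(\g)$; forgetting its associative structure leaves exactly the underlying $\bP_n$-coalgebra $\U(\g)$. Hence $\forget_{n+1}^n(A)$ corresponds to $\U(\g)$ under $\ialg_{\bP_n}\simeq\coalg_{\bP_n}$.

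Next I would identify $A_0$ with $\Sym(\g)$. Let $\g^{\mathrm{ab}}$ be $\g$ with its Lie bracket set to zero but its cobracket $\delta$ retained; this is again an $(n-1)$-shifted Lie bialgebra, and since the Chevalley--Eilenberg differential depends only on $\delta$, the cobar $\Omega\g^{\mathrm{ab}}$ is precisely the commutative dg algebra underlying $A$ equipped with the trivial $\bP_{n+1}$-bracket. Moreover $\U(\g^{\mathrm{ab}})=\Sym(\g)$ as a $\bP_n$-coalgebra, namely the cocommutative coalgebra on $\g$ with cobracket induced by $\delta$. By the previous paragraph applied to $\g^{\mathrm{ab}}$, the object $\forget_{n+1}^n(\Omega\g^{\mathrm{ab}})$ corresponds to $\Sym(\g)$; it then remains to check that $\forget_{n+1}^n$ carries a trivial-bracket $\bP_{n+1}$-algebra to the trivial-bracket $\bP_n$-algebra on the same commutative dg algebra, which gives $\forget_{n+1}^n(\Omega\g^{\mathrm{ab}})\simeq A_0$. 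Combining the two identifications, the pair $(\forget_{n+1}^n(A),A_0)$ corresponds to $(\U(\g),\Sym(\g))$, and the conclusion follows.

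The main obstacle is exactly this last verification, that trivializing the $\bP_{n+1}$-bracket matches the abelianization $\g\to\g^{\mathrm{ab}}$ and yields the trivial $\bP_n$-bracket. Concretely one must use the compatibility of the additivity equivalence with the forgetful functors to $\icalg$ (so that $\forget_{n+1}^n(\Omega\g^{\mathrm{ab}})$ has the correct underlying commutative algebra) together with the fact that a trivial $\bP_{n+1}$-algebra maps under $\B$ and $\U$ to a commutative, hence trivial-$\bE_1$, object whose underlying $\bP_n$-coalgebra is $\Sym(\g)$. This is a shifted Poincar\'e--Birkhoff--Witt statement—that Koszul duality pairs the trivial-bracket $\bP_n$-algebra with $\Sym(\g)$, equivalently that the symmetrization map $\Sym(\g)\to\U(\g)$ is an equivalence of $\bP_n$-coalgebras—and is where the genuine content of the proposition lies.
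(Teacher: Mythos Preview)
The paper does not supply an explicit proof of this proposition; it is presented as an immediate consequence of the construction of the additivity functor. Your overall strategy---transport both objects through the equivalence $\ialg_{\bP_n}\simeq\coalg_{\bP_n}$ and invoke that an equivalence of $\infty$-categories induces an equivalence on mapping spaces---is exactly what is intended, and your identification of $\forget_{n+1}^n(A)$ with $\U(\g)$ is correct and follows directly from $\B\Omega\g\simeq\g$.

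There is, however, a genuine confusion in your final paragraph. The remaining step is to show that $A_0$ corresponds to $\Sym(\g)$ under $\bP_n$-Koszul duality. Your detour through $\g^{\mathrm{ab}}$ does not simplify this: it reduces to showing $\forget_{n+1}^n\circ\triv_{\bP_{n+1}}\simeq\triv_{\bP_n}$, which unwinds back to the same Koszul-duality identification. More importantly, this identification is \emph{not} a PBW statement and is \emph{not} ``equivalently that the symmetrization map $\Sym(\g)\to\U(\g)$ is an equivalence of $\bP_n$-coalgebras.'' The latter is false for general $\g$---indeed, the whole point of the proposition is that such an equivalence is \emph{additional data}, not something that always exists. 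The proposition is a translation statement (equivalences on one side $\leftrightarrow$ equivalences on the other), not an existence statement. The Koszul-duality identification $A_0\leftrightarrow\Sym(\g)$ is a direct computation (the $\bP_n$-cobar of the cofree cocommutative coalgebra with cobracket induced by $\delta$ recovers $\Sym(\g[-n])$ with the Chevalley--Eilenberg differential and zero bracket), and the paper treats it as standard: the analogous fact, shifted by one, is used verbatim in the proof of \cref{prop:poissonforget2} (``$\Sym(\g[-1])\in\coalg_{\bP_{n-1}}$ corresponds under Koszul duality to the $\bP_{n-1}$-algebra $A$ with the trivial bracket'').

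So your plan is sound, but drop the $\g^{\mathrm{ab}}$ detour and the PBW framing: simply state the Koszul-duality computation for $A_0$ directly, and then the proposition is formal.
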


Observe that $\U(\g)$ is the algebra of distributions on the formal Poisson-Lie group $G$ integrating $\g$ and $\Sym(\g)$ is the algebra of distributions on the completion $\widehat{\g}_0$. Thus, the forgetful functor $\forget_{n+1}^n$ is closely related to the theory of formal linearizations of Poisson-Lie groups; we refer to \cite[Proposition 2.17]{Sa3} for more details.

Even though the forgetul functor $\forget_n^{n-1}\colon \ialg_{\bP_n}\rightarrow \ialg_{\bP_{n-1}}$ is nontrivial, the iterated forgetful functors $\forget_n^{n-m}\colon \ialg_{\bP_n}\rightarrow \ialg_{\bP_{n-m}}$ for $m\geq 2$ are essentially trivial, i.e. the corresponding $\bP_{n-m}$-algebra is automatically commutative.

\begin{prop}
The choice of a Drinfeld associator gives a homotopy commutativity data in the diagram
\[
\xymatrix{
\ialg_{\bP_{n+1}} \ar_{\forget}[dr] \ar^-{\forget_{n+1}^{n-1}}[rr] && \ialg_{\bP_{n-1}} \\
& \icalg \ar_{\triv}[ur] &
}
\]
\label{prop:poissonforget2}
\end{prop}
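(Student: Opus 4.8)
The plan is to reduce the statement to the additivity equivalences and then to the linearization result of \cref{prop:poissonlinearization}. First I would apply Poisson additivity (\cref{thm:poissonadditivity}) to write $\ialg_{\bP_{n+1}}\simeq \ialg_{\bE_2}(\ialg_{\bP_{n-1}})$, and check that under this identification the iterated forgetful functor $\forget_{n+1}^{n-1}$ is the functor $\ialg_{\bE_2}(\ialg_{\bP_{n-1}})\to\ialg_{\bP_{n-1}}$ forgetting the whole $\bE_2$-structure. This compatibility is where I would invoke Dunn--Lurie additivity (\cref{thm:dunnlurieadditivity}) in the symmetric monoidal $\infty$-category $\ialg_{\bP_{n-1}}$: the equivalence $\ialg_{\bE_2}(\ialg_{\bP_{n-1}})\simeq\ialg(\ialg(\ialg_{\bP_{n-1}}))$ exhibits forgetting $\bE_2$ as the composite of two single steps, each of which is one application of a forgetful functor $\forget_{n+1}^{n}$ in the sense of the paper. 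After this reduction the problem becomes the assertion that the underlying $\bP_{n-1}$-algebra of an $\bE_2$-algebra in $\ialg_{\bP_{n-1}}$ carries a bracket which is canonically (homotopically) trivial, i.e. that $\forget_{n+1}^{n-1}$ is equivalent to $\triv\circ\forget$.

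Second, I would translate the residual bracket into the Lie-bialgebra language used to build the additivity functor. Writing $A=\Omega\g$ for an $(n-1)$-shifted Lie bialgebra $\g$, one step of forgetting produces the $\bP_n$-coalgebra $\U(\g)$, and by \cref{prop:poissonlinearization} the bracket on $\forget_{n+1}^{n}(A)$ is trivialized precisely by a symmetrization isomorphism $\U(\g)\cong\Sym(\g)$ of $\bP_n$-coalgebras. Iterating, the bracket on $\forget_{n+1}^{n-1}(A)$ is governed by the analogous symmetrization for the shifted Lie bialgebra associated with $\forget_{n+1}^{n}(A)$, and the content of the proposition is that, unlike the single step, this second symmetrization exists unconditionally.

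The key step, and the place where the Drinfeld associator enters, is the construction of this symmetrization as coherent data. A plain PBW symmetrization $\U\cong\Sym$ is available for free in characteristic zero at the level of coalgebras, but making it compatible with the shifted Poisson co-structure is obstructed at the single step; the associator furnishes the formality of the little $2$-disks, i.e. an equivalence between the $\bE_2$-structure and its associated graded Gerstenhaber ($\bP_2$) structure, and it is exactly this data that produces a coherent exponential identification trivializing the $\bP_{n-1}$-bracket one level up. Concretely, I would feed the associator into the symmetrization maps so as to obtain a natural transformation of $\infty$-functors $\forget_{n+1}^{n-1}\Rightarrow\triv\circ\forget$ rather than a mere objectwise equivalence.

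I expect the main obstacle to be coherence rather than any objectwise computation. Promoting the associator-dependent objectwise equivalences to a genuinely homotopy-commutative triangle of $\infty$-functors, natural in $A$ and depending only on the chosen associator, is the delicate part, since one must control the compatibility of the symmetrization with the full shifted Poisson co-structure at the level of $\infty$-categories and not just on cohomology. A subsidiary difficulty is verifying that the two iterated single-step forgetful functors assemble into the $\bE_2$-forgetful functor compatibly with both the Poisson and the Dunn--Lurie additivity equivalences, so that the trivialization produced on the $\bE_2$-side indeed computes $\forget_{n+1}^{n-1}$.
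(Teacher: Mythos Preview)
Your proposal correctly isolates the two essential inputs---Poisson additivity and Tamarkin's formality $\C_\bullet(\bE_2)\simeq\bP_2$ coming from a Drinfeld associator---but the route you take is more circuitous than the paper's, and the step where you ``feed the associator into the symmetrization maps'' remains vague.

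The paper does not iterate the single-step forgetful functor and then try to trivialize via a second application of \cref{prop:poissonlinearization}. Instead it identifies the \emph{two-fold} additivity functor $\ialg_{\bP_{n+1}}\to\ialg_{\bE_2}(\ialg_{\bP_{n-1}})$ directly on objects: writing $A=\Omega\g$ for an $(n-1)$-shifted Lie bialgebra $\g$, the image of $A$ is the operadic $\bE_2$-enveloping algebra $\U_{\bE_2}(\g)$. The Drinfeld associator then enters exactly once, through Tamarkin's equivalence of Hopf operads $\C_\bullet(\bE_2)\simeq\bP_2$, which immediately yields an equivalence of enveloping constructions $\U_{\bE_2}(\g)\simeq\U_{\bP_2}(\g)$. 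Finally, $\U_{\bP_2}(\g)\cong\Sym(\g[-1])$ as a $\bP_{n-1}$-coalgebra, and under Koszul duality this is precisely $A$ with the trivial $\bP_{n-1}$-bracket. No iterated PBW symmetrization or description of an auxiliary Lie bialgebra attached to $\forget_{n+1}^n(A)$ is needed.

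Your approach, by contrast, asks you to identify the bar construction of $\forget_{n+1}^n(A)$ as a new $(n-2)$-shifted Lie bialgebra and then produce, from the associator, a $\bP_{n-1}$-coalgebra isomorphism $\U\cong\Sym$ for \emph{that} bialgebra---something you never make precise. The paper's argument bypasses this entirely: the formality of $\bE_2$ is applied at the level of the operad governing the enveloping construction, not at the level of symmetrization maps for successive bialgebras, and naturality in $A$ (hence the homotopy commutativity of the triangle) is automatic because one is transporting along an equivalence of operads.
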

\begin{proof}
Let $\g$ be an $(n-1)$-shifted Lie bialgebra and $A=\Omega\g$ the corresponding $\bP_{n+1}$-algebra. The functor $\ialg_{\bP_{n+1}}\rightarrow \ialg_{\bE_2}(\ialg_{\bP_{n-1}})$ is given by
\[A\mapsto \U_{\bE_2}(\g),\]
where $\U_{\bE_2}(-)$ is the $\bE_2$ enveloping algebra.

Given a Drinfeld associator, by the results of \cite{Ta} we obtain an equivalence of Hopf operads $\C_\bullet(\bE_2)\cong \bP_2$ and hence an equivalence of universal enveloping functors $\U_{\bE_2}(\g)\cong \U_{\bP_2}(\g)$. The latter object can be identified with
\[\U_{\bP_2}(\g)\cong \Sym(\g[-1])\]
as $\bP_{n-1}$-coalgebras. But $\Sym(\g[-1])\in\coalg_{\bP_{n-1}}$ corresponds under Koszul duality to the $\bP_{n-1}$-algebra $A$ with the trivial bracket.
\end{proof}

\subsection{Polyvectors}

\label{sect:polyvectors}

Now we are going to give a way to compute the $\infty$-groupoid $\Pois(A, n)$ of $n$-shifted Poisson structures on $A$. Let us recall from \cite[Section 5.3]{HA} the notion of a centralizer in a symmetric monoidal $\infty$-category $\cC$.

\begin{defn}
Let $f\colon A\rightarrow B$ a morphism in $\cC$. The \defterm{centralizer of $f$} is the universal object $\Z(f)\in\cC$ equipped with the following data:
\begin{enumerate}
\item A morphism $1\rightarrow \Z(f)$,

\item A morphism $\Z(f)\otimes A\rightarrow B$,

\item A commutativity data in the diagram
\[
\xymatrix{
& \Z(f)\otimes A \ar[dr] & \\
A \ar[ur] \ar^{f}[rr] && B
}
\]
\end{enumerate}
\end{defn}

\begin{defn}
For an object $A\in\cC$ its \defterm{center $\Z(A)$} is the centralizer $\Z(\id\colon A\rightarrow A)$.
\label{defn:center}
\end{defn}

Note that a centralizer may not exist in general; we will say that $\cC$ \defterm{admits centralizers} if any morphism $f$ in $\cC$ has a centralizer $\Z(f)\in\cC$.

\begin{example}
Suppose $\cC$ is a closed symmetric monoidal $\infty$-category. Then it admits centralizers given by $\Z(f)=\Hom_\cC(A, B)$, the internal $\Hom$.
\end{example}

\begin{example}
Consider the symmetric monoidal $\infty$-category $\ialg(\ich)$ of dg algebras. The center of a dg algebra $A\in\ialg(\ich)$ is an object $\Z(A)\in\ialg(\ialg(\ich))$ which by the Dunn--Lurie additivity \cref{thm:dunnlurieadditivity} is the same as an $\bE_2$-algebra. One can show that this $\bE_2$-algebra is equivalent to the Hochschild complex $\CH^\bullet(A, A)$ (see e.g. \cite[Section 7.2]{Gin}). When $A$ is concentrated in degree $0$, the zeroth cohomology of the Hochschild complex $\CH^\bullet(A, A)$ is isomorphic to the ordinary center of $A$ as an associative algebra. Thus, we see that $\Z(A)$ is a derived generalization of this notion.
\end{example}

\begin{example}
Let $X=\Spec A$ be a smooth affine Poisson scheme. Then $\Z(A)$ is quasi-isomorphic to the complex $\Gamma(X, \Sym(\T_X[-1]))$ equipped with the differential $[\pi_X, -]$, i.e. the Poisson cohomology of $X$. Its zeroth cohomology coincides with the space of Casimir functions of $A$, i.e. elements $f\in A$ such that $\{f, g\}=0$ for every $g\in A$.
\end{example}

Suppose $\cC$ admits centralizers and consider a sequence of morphisms $A\xrightarrow{f} B\xrightarrow{g} C$ in $\cC$. From the universal property we obtain an action morphism
\[\Z(f)\otimes \Z(g)\rightarrow \Z(g\circ f).\]

In particular, we can upgrade $\Z(A)=\Z(\id)$ to an object of $\ialg(\cC)$. We have the following result given by \cite[Theorem 5.3.1.14]{HA}:

\begin{thm}
Suppose $\cO$ is a coherent $\infty$-operad and $\cC$ is a symmetric monoidal $\infty$-category whose tensor product preserves colimits. Then $\ialg_{\cO}(\cC)$ admits centralizers.
\end{thm}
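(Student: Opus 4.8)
The statement to prove is \cite[Theorem 5.3.1.14]{HA}: if $\cO$ is a coherent $\infty$-operad and $\cC$ is a symmetric monoidal $\infty$-category whose tensor product preserves colimits, then $\ialg_\cO(\cC)$ admits centralizers. The plan is to reduce the existence of a centralizer $\Z(f)$ for a morphism $f\colon A\to B$ of $\cO$-algebras to a representability statement about a suitable functor of points, and then to verify the representability hypothesis using the colimit-preservation assumption together with the coherence of $\cO$.

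First I would unwind the universal property in the definition of the centralizer. The data of a map $1\to \Z(f)$ together with an action $\Z(f)\otimes A\to B$ and the commutativity witness is precisely the data of a factorization of $f$ through the free construction applied to $\Z(f)$; said differently, $\Z(f)$ corepresents the functor sending an object $Z\in\cC$ (or more precisely an $\cO$-algebra, since we want $\Z(f)$ itself to carry algebra structure via the action maps coming from $A\xrightarrow{\id} A\xrightarrow{f} B$) to the space of commutativity data making the triangle in the definition commute. So the first step is to identify this moduli problem as a mapping space in an appropriate over/under-category: the centralizer should be the classifying object for maps out of $A$ that factor $f$, which is governed by the $\infty$-operad of \emph{pairs} or by the tensor product $\cO\otimes \bE_0$ type structure. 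Concretely I would pass to the $\infty$-category of $\cO$-algebras equipped with an $A$-action, i.e. modules, and phrase $\Z(f)$ as an endomorphism-type object $\Map_{\mathrm{Mod}_A}(A, B)$.

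Second, I would show that the relevant functor is representable. This is exactly where the two hypotheses enter. Coherence of $\cO$ guarantees (via the theory of operadic left Kan extensions and the tensor-product formalism of \cite[Section 3.2]{HA}) that the category of $\cO$-algebras has the good formal properties needed to form the relevant modules and endomorphism objects, and in particular that the forgetful functors admit the adjoints one needs to build free algebras and to compute the classifying object. The colimit-preservation hypothesis on $\otimes$ ensures that $\ialg_\cO(\cC)$ is itself presentable (or at least that the forgetful functor $\ialg_\cO(\cC)\to\cC$ creates the relevant colimits and that free $\cO$-algebras exist), so that the corepresenting object can be produced by the adjoint functor theorem. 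Thus the key step is: the functor $Z\mapsto \{\text{commutativity data for } f \text{ over } Z\}$ is accessible and preserves limits, hence corepresentable, and its corepresenting object is $\Z(f)$.

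The main obstacle, and the technical heart of the argument, is verifying that the moduli functor whose representability gives $\Z(f)$ genuinely has an algebra structure and that the representing object inherits the $\cO$-algebra (indeed the factorization-algebra) structure automatically, rather than just existing as a bare object of $\cC$. This is precisely the point where coherence of $\cO$ is indispensable: without it the tensor structure on $\cO$-algebras controlling the action $\Z(f)\otimes A\to B$ need not be well-behaved, and the universal factorization need not assemble into a single object. I expect the bulk of the work to be bookkeeping with the $\infty$-operadic structures — tracking how the action maps and commutativity data organize into a diagram indexed by an auxiliary operad and then invoking coherence to guarantee the existence of the classifying object — rather than any single hard estimate. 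Everything else (accessibility, limit-preservation, and the final appeal to the adjoint functor theorem) is formal given the colimit-preservation of $\otimes$.
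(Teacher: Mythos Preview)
The paper does not contain a proof of this statement: it is quoted verbatim as \cite[Theorem 5.3.1.14]{HA} and used as a black box, with no argument or sketch given. There is therefore nothing in the paper's own text to compare your proposal against. Your sketch is a reasonable outline of the strategy in Lurie's proof (reduce to a representability/adjoint-functor-theorem argument, using coherence of $\cO$ to control the operadic module theory and colimit-preservation of $\otimes$ to get presentability), but since the paper itself offers no proof, the correct response here is simply to cite \cite[Theorem 5.3.1.14]{HA} rather than to supply an independent argument.
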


At the moment the above statement is not known for dg operads (see, however, the recent preprint \cite{CH} which discusses the appropriate notion of a linear $\infty$-operad). Nevertheless, we have the following statement whose proof will appear in a future paper.

\begin{thm}
Let $R$ be a commutative dg algebra over $k$. Then the symmetric monoidal $\infty$-category of $R$-linear $\bP_n$-algebras $\ialg_{\bP_n}(\imod_R)$ admits centralizers.
\end{thm}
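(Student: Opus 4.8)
The plan is to sidestep the missing coherence of the dg operad $\bP_n$ by transporting the problem to the little disks operad $\bE_n$, for which \cite[Theorem 5.3.1.14]{HA} does apply. The $\infty$-category $\imod_R$ is presentable and closed symmetric monoidal, and its tensor product $\otimes_R$ preserves colimits separately in each variable; since $\bE_n$ is a coherent $\infty$-operad, the quoted theorem shows that $\ialg_{\bE_n}(\imod_R)$ admits centralizers. It therefore suffices to produce a symmetric monoidal equivalence $\ialg_{\bP_n}(\imod_R)\simeq \ialg_{\bE_n}(\imod_R)$ and to observe that the existence of centralizers is invariant under symmetric monoidal equivalences: the defining data of $\Z(f)$ (a unit $1\to \Z(f)$, an action $\Z(f)\otimes A\to B$, and the commutativity datum) is phrased purely in terms of the symmetric monoidal structure, so any symmetric monoidal equivalence carries centralizers to centralizers and preserves the property of admitting them.

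For $n\geq 2$ such an equivalence is supplied by the formality of the little disks operad: over a field of characteristic zero a choice of Drinfeld associator yields an equivalence of Hopf operads $\C_\bullet(\bE_n)\simeq \bP_n$ (for $n=2$ this is \cite{Ta}, the same input used in \cref{prop:poissonforget2}). Being an equivalence of Hopf operads, it induces a symmetric monoidal equivalence on algebra categories, $\ialg_{\bP_n}(\imod_R)\simeq \ialg_{\C_\bullet(\bE_n)}(\imod_R)\simeq \ialg_{\bE_n}(\imod_R)$, which completes the argument in this range. One should check that the identification of algebras over the dg operad $\C_\bullet(\bE_n)$ with $\bE_n$-algebras in $\imod_R$ is compatible with the symmetric monoidal structures, but this is routine in characteristic zero.

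The main obstacle is the small cases $n\leq 1$, where $\bE_n$ is not formal ($\H_\bullet(\bE_1)=\Ass\neq \bP_1$) and the formality shortcut is unavailable. I see two routes. The first is to construct the centralizer of a morphism of $\bP_1$-algebras by hand: using presentability of $\imod_R$ one assembles $\Z(f)$ from the relative tensor products and bar resolutions built out of $f$ and then verifies the universal property directly, the expected answer being a polyvector-type complex in the spirit of \cref{sect:polyvectors}. The second, and the one I expect to give the clean uniform statement for all $n$, is to prove that $\bP_n$ is a coherent linear $\infty$-operad in the sense of \cite{CH} --- using its explicit quadratic (Koszul) presentation --- and then to establish the linear analogue of \cite[Theorem 5.3.1.14]{HA}. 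Proving coherence of $\bP_n$ is the crux: coherence for $\bE_n$ rests on the geometry of configuration spaces, and an algebraic replacement for that input, valid for all $n$ and not just $n\geq 2$, is precisely what the foundational part of the future paper must provide.

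As a consistency check, note that Poisson additivity predicts the shape of the answer: for $A\in\ialg_{\bP_n}(\imod_R)$ the center $\Z(A)=\Z(\id\colon A\to A)$ is an associative algebra object of $\ialg_{\bP_n}(\imod_R)$, hence by \cref{thm:poissonadditivity} a $\bP_{n+1}$-algebra, recovering the shifted analogue of the fact that the Hochschild cochains of an $\bE_n$-algebra form an $\bE_{n+1}$-algebra.
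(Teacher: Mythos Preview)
The paper does not actually prove this theorem: immediately before the statement it says ``the following statement whose proof will appear in a future paper,'' and no argument is given. So there is no proof in the paper to compare your proposal against.

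That said, your strategy is sound where it goes through and honest where it does not. For $n\geq 2$ the reduction to $\bE_n$ via formality is a legitimate shortcut: the formality equivalence $\C_\bullet(\bE_n)\simeq \bP_n$ is an equivalence of Hopf operads (this is exactly the input the paper invokes from \cite{Ta} in the proof of \cref{prop:poissonforget2} for $n=2$, and the higher-$n$ formality results respect the Hopf structure), so the induced equivalence of algebra categories is symmetric monoidal and Lurie's \cite[Theorem 5.3.1.14]{HA} transports. One caveat you gloss over: this makes the construction depend on a choice of associator for $n=2$, whereas the intended uniform proof should be canonical.

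You are right that $n\leq 1$ is the genuine obstacle, since $\H_\bullet(\bE_1)=\Ass\neq \bP_1$ and no formality bridge is available. Your first proposed route---building the centralizer by hand as a polyvector-type complex---is in fact the line the paper itself takes in the relative setting: the strict center $\Z^{str}(B)$ of \cref{sect:relpoisson} is exactly such an explicit model, and the paper again defers to future work the verification that $\Z^{str}(B)\simeq \Z(B)$. Your second route, proving coherence of $\bP_n$ as a linear $\infty$-operad in the sense of \cite{CH} and then establishing a linear analogue of Lurie's theorem, is plausible but amounts to developing the very foundations the paper flags as missing.

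In short: your proposal is not a complete proof and does not pretend to be, but it correctly separates the easy range ($n\geq 2$) from the hard one ($n\leq 1$) and identifies the same explicit-model approach that the paper gestures at for the latter.
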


We are now going to construct the algebra of $n$-shifted polyvector fields.

\begin{defn}
A \defterm{graded dg Lie algebra} is a dg Lie algebra $\g$ equipped with a weight grading $\g=\bigoplus_n \g_n$ such that the Lie bracket has weight $-1$.
\end{defn}

Similarly, one can define a graded $\bP_n$-algebra to be a $\bP_n$-algebra equipped with an extra weight grading such that the Lie bracket has weight $-1$. Thus, both $\Lie$ and $\bP_n$ lift to operads in graded complexes. We denote by $\ialg_{\bP_n}^{gr}$ the $\infty$-category of graded $\bP_n$-algebras. We denote by
\[\triv_{\bP_n}\colon \icalg\longrightarrow \ialg_{\bP_n}^{gr}\]
the functor which sends a commutative dg algebra $A$ to the graded $\bP_n$-algebra concentrated in weight $0$ with the zero bracket.

\begin{defn}
Let $A\in\icalg$ be a commutative dg algebra. The \defterm{algebra of $n$-shifted polyvector fields on $A$} is
\[\Pol(A, n) = \Z(\triv_{\bP_{n+1}}(A))\in\ialg(\ialg_{\bP_{n+1}}^{gr}).\]
\end{defn}

In particular, using \cref{thm:poissonadditivity} we see that $\Pol(A, n)\in\ialg_{\bP_{n+2}}^{gr}$. The following statement gives a control over $\Pol(A, n)$ as a graded commutative dg algebra.

\begin{prop}
Let $A$ be a commutative dg algebra and $\bL_A\in\imod_A$ its cotangent complex. Then we have an equivalence of graded commutative dg algebras
\[\Pol(A, n)\cong \Hom_A(\Sym_A(\bL_A[n+1]), A).\]
\end{prop}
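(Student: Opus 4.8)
The plan is to compute only the underlying graded commutative dg algebra of $\Pol(A,n) = \Z(\triv_{\bP_{n+1}}(A))$, so from the outset I would discard the $\bP_{n+2}$-structure produced by \cref{thm:poissonadditivity} and retain only the image under the forgetful functor to graded commutative dg algebras. The center of an object is modeled by its complex of endomorphisms, so the first task is to produce an explicit cochain model for $\Z(\triv_{\bP_{n+1}}(A))$. Here the weight grading has a clean meaning: since the bracket of $\bP_{n+1}$ has weight $-1$ and the trivial algebra is concentrated in weight $0$, the weight-$w$ component of the center is built from $w$-ary operations, and I expect
\[\Pol(A,n)_w \simeq \Hom_A(\Sym^w_A(\bL_A[n+1]), A).\]
In weight $0$ this reads $\Hom_A(A,A)\simeq A$ and in weight $1$ it reads $\Hom_A(\bL_A,A)[-(n+1)]$, the expected $n$-shifted vector fields.

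To produce such a model I would use that $\bP_{n+1}$ is a Koszul operad and present the center as the convolution (multiderivation) complex attached to its Koszul dual cooperad, evaluated on a cofibrant resolution of $A$. The point of the \emph{trivial} bracket is that the only surviving structure is the commutative product of $A$, so the convolution differential collapses to the Harrison/Andr\'{e}--Quillen differential of $A$ as a commutative algebra, together with the internal differential. Using the PBW decomposition $\bP_{n+1}\cong \Comm\circ\Lie$ (with the Lie part suitably shifted) and the Koszul self-duality of $\bP_{n+1}$ up to an operadic degree shift, the dual cooperad splits into a commutative (Harrison) direction and a Lie direction. The commutative direction applied to $A$ computes the cotangent complex $\bL_A$, and this is where I would invoke that the Harrison complex models $\bL_A$; the Lie/cocommutative direction contributes the outer symmetric powers and the shift $[n+1]$, producing $\Sym_A(\bL_A[n+1])$ as the cogenerating object whose dual is the weight-graded complex above.

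It then remains to match the algebra structures. The commutative multiplication on the center comes from insertion/composition of multiderivations, and under the identification above I expect this to be exactly the commutative product on $\Hom_A(\Sym_A(\bL_A[n+1]), A)$ induced by the coalgebra structure of $\Sym_A(\bL_A[n+1])$; I would verify this on generators (functions in weight $0$, derivations in weight $1$) and extend by the Leibniz rule. To handle a general, non-smooth dg algebra $A$, for which $\bL_A$ is a genuine complex, I would reduce to the case of free (polynomial) commutative algebras: both $\bL_A$ and hence $\Sym_A(\bL_A[n+1])$ are computed from a simplicial resolution of $A$ by free algebras, and the center is compatible with such resolutions, so it suffices to establish the equivalence naturally on free algebras, where $\bL_A$ is free and every term can be written down by hand.

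The hard part will be the simultaneous control of the differential and the commutative product through the Koszul-duality model: this is an HKR-type degeneration statement asserting that the convolution complex of the trivial $\bP_{n+1}$-algebra carries no higher corrections and splits off the claimed graded commutative algebra. Making it precise requires choosing the cofree resolution so that the weight grading is strict and the resulting identification is natural in $A$, which is precisely what makes the reduction to free algebras legitimate.
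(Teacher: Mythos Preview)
The paper does not supply a proof of this proposition: it is stated and immediately used, with the implicit understanding that it is established in the literature (e.g.\ \cite{CPTVV}, \cite{Mel}, \cite{MS1}). So there is no argument in the paper to compare your proposal against.

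That said, your sketch is along the standard lines and is essentially the argument one finds in those references. Modeling the center of $\triv_{\bP_{n+1}}(A)$ by the operadic deformation/convolution complex attached to the Koszul resolution of $\bP_{n+1}$, observing that the trivial bracket collapses the twisting differential so that only the Harrison part survives, and then identifying the Harrison complex of a cofibrant $A$ with $\bL_A$ weight by weight is exactly how one arrives at $\Hom_A(\Sym_A(\bL_A[n+1]),A)$. The reduction to cofibrant (e.g.\ quasi-free) commutative dg algebras is the right move to make $\bL_A$ explicit and to make the convolution model strict.

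Two places deserve more care if you want this to be a complete proof. First, the center in \cref{defn:center} is an $\infty$-categorical universal object in $\ialg_{\bP_{n+1}}^{gr}$, not a priori the convolution complex; the identification of the two is precisely the content of the later (and in the paper still unproved) comparison between $\Z$ and $\Z^{str}$, so you are effectively assuming that result. Second, the commutative product on the center in the convolution model is the convolution product coming from the cocommutative comultiplication on the Koszul dual cooperad, not literally ``insertion of multiderivations''; matching it with the product on $\Hom_A(\Sym_A(\bL_A[n+1]),A)$ induced by the shuffle coproduct on $\Sym_A(\bL_A[n+1])$ is the HKR-type step you allude to, and for cofibrant $A$ it is indeed a formality statement, but it should be stated and checked rather than asserted.
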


One can think of the Lie bracket on $\Pol(A, n)$ as a generalization of the Schouten bracket of polyvector fields. A weight 2 element of $\Pol(A, n)$ is a bivector on $A$ and by \cref{prop:poissonjacobi} one can describe $\bP_{n+1}$-brackets on $A$ in terms of bivectors $\pi$ satisfying $[\pi, \pi] = 0$.

Now let $k(2)[-1]\in\ialg_{\Lie}^{gr}$ be the trivial graded dg Lie algebra concentrated in weight $2$ and cohomological degree $1$. Then a bivector $\pi$ satisfying $[\pi, \pi] = 0$ is the same as a morphism of graded Lie algebras $k(2)[-1]\rightarrow \Pol(A, n)[n+1]$. In fact, this gives the whole $\infty$-groupoid of $n$-shfited Poisson structures as shown in \cite{Mel}.

\begin{thm}
One has an equivalence of $\infty$-groupoids
\[\Pois(A, n)\cong \Map_{\ialg_{\Lie}^{gr}}(k(2)[-1], \Pol(A, n)[n+1]).\]
\end{thm}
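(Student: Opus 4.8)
The plan is to reduce both sides to the same Maurer--Cartan space and then match them. I would start with the right-hand side. Since $k(2)[-1]$ is the abelian graded dg Lie algebra concentrated in weight $2$ and degree $1$, the derived mapping space is computed after replacing the source by a quasi-free model. I would use the graded dg Lie algebra $Q$ freely generated by classes $x_w$ of weight $w\geq 2$ and cohomological degree $1$, with differential
\[d x_w = -\frac{1}{2}\sum_{\substack{i+j=w+1\\ i,j\geq 2}}[x_i, x_j],\]
which is weight-homogeneous precisely because the bracket of a graded dg Lie algebra lowers weight by one, and which maps quasi-isomorphically onto $k(2)[-1]$ via $x_2\mapsto$ (generator) and $x_{>2}\mapsto 0$. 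A map $Q\to \Pol(A,n)[n+1]$ then amounts to a family $\pi_w$ of elements of weight $w$ and degree $1$ assembling into $\pi=\sum_{w\geq 2}\pi_w$ that solves the Maurer--Cartan equation $d\pi+\frac{1}{2}[\pi,\pi]=0$. Thus the right-hand side is the Maurer--Cartan space of the weight $\geq 2$ part of $\Pol(A,n)[n+1]$; note that concentrating the source in weight $2$ is exactly what discards the weight-$0$ summand (the functions $A$) and the weight-$1$ summand (vector fields) of $\Pol(A,n)$.

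Unwinding this Maurer--Cartan equation weight by weight, the weight-$2$ component gives a closed bivector $\pi_2$, the weight-$3$ component $d\pi_3=-\frac{1}{2}[\pi_2,\pi_2]$ exhibits $\pi_3$ as a coherent homotopy for the Jacobi identity, and the higher components encode the full tower of coherences. This is exactly the data of a homotopy $\bP_{n+1}$-bracket on $A$ compatible with its commutative multiplication, matching the informal description of $\Pois(A,n)$.

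The substantive step is to identify the abstractly defined $\Pois(A,n)$, namely the fiber of $\ialg_{\bP_{n+1}}^{\sim}\to\icalg^{\sim}$ over $A$, with this Maurer--Cartan space. Here I would invoke the universal property of the centralizer, which makes $\Pol(A,n)=\Z(\triv_{\bP_{n+1}}(A))$ the object that governs deformations of the trivial $\bP_{n+1}$-structure on $A$. A point of the fiber is a $\bP_{n+1}$-algebra $\tilde A$ together with an equivalence $\tilde A\simeq A$ of commutative algebras; grading such a structure by the number of bracket insertions presents it as a deformation of $\triv_{\bP_{n+1}}(A)$, and deformation theory over the Koszul operad $\bP_{n+1}$ in characteristic zero identifies the space of such deformations with the Maurer--Cartan space of the shifted, weight $\geq 2$ convolution dg Lie algebra, which is the underlying graded dg Lie algebra of the center $\Pol(A,n)[n+1]$. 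I expect this identification --- matching an $\infty$-categorical fiber with the Maurer--Cartan space of the centralizer, as spaces with all their higher homotopies and naturally in $A$ --- to be the main obstacle; I would carry it out by combining the universal property of $\Z(\triv_{\bP_{n+1}}(A))$ with the bar--cobar (Koszul-dual) model for $\bP_{n+1}$-structures that underlies the additivity equivalence of \cref{thm:poissonadditivity}, following \cite{Mel}.

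Finally I would assemble the two identifications into a single equivalence, taking care that it is an equivalence of $\infty$-groupoids rather than merely a bijection on strict Poisson bivectors: the gauge homotopies of the Maurer--Cartan space must be matched with the paths in the fiber. Checking this at the level of the simplicial Maurer--Cartan spaces and verifying naturality in $A$ completes the argument.
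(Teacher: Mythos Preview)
The paper does not actually prove this theorem; it merely cites \cite{Mel} and then states the companion proposition (identifying $\Map_{\ialg_{\Lie}^{gr}}(k(2)[-1],\g)$ with $\uMC(\g^{\geq 2})$, attributed to \cite[Proposition 1.19]{MS1}) which is exactly your treatment of the right-hand side. So there is no paper proof to compare against beyond that reference.

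Your outline is the standard strategy and is essentially what \cite{Mel} does. Two remarks. First, your cofibrant model $Q$ for $k(2)[-1]$ is correct, but in practice one packages this more cleanly by noting that the Chevalley--Eilenberg functor $\g\mapsto \C^\bullet(\g)$ sends $k(2)[-1]$ to $k[\hbar]$ with $\hbar$ in weight $-2$ and degree $0$, so that $\Map_{\ialg_{\Lie}^{gr}}(k(2)[-1],\g)\simeq \Map_{\icalg^{gr}}(\C^\bullet(\g),k[\hbar])$, which immediately gives the Maurer--Cartan description; this is the route taken in \cite{MS1}. Second, and more importantly, your ``substantive step'' is framed through the universal property of the centralizer, but in the paper's logical order $\Pol(A,n)$ as a centralizer is introduced \emph{after} this theorem, and the identification of $\Z(\triv_{\bP_{n+1}}(A))$ with the operadic deformation complex is itself a nontrivial fact proved elsewhere. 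The argument in \cite{Mel} is more direct: one takes a cofibrant (e.g.\ $\bP_{n+1,\infty}$) resolution of the operad, observes that homotopy $\bP_{n+1}$-structures on $A$ extending the given commutative multiplication are exactly Maurer--Cartan elements in the convolution Lie algebra built from the Koszul dual cooperad $\bP_{n+1}^{\text{!`}}$, and then identifies that convolution Lie algebra with $\Pol(A,n)^{\geq 2}[n+1]$ by an explicit computation using the Koszul self-duality $\bP_{n+1}^!\cong \bP_{n+1}\{n+1\}$. Your sketch gestures at this (``bar--cobar model for $\bP_{n+1}$-structures''), but routing it through the centralizer adds an extra layer that is not needed and is not how the cited reference proceeds.
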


To get a more concrete understanding of the previous statement, let us recall the notion of a Maurer--Cartan space from \cite{Hin} and \cite{Get}. Let $\Omega_n = \Omega(\Delta^n)$ be the commutative dg algebra of polynomial differential forms on the $n$-simplex. Explicitly, it is given by
\[\Omega_n = k[t_0,\dots, t_n, \d t_0,\dots, \d t_n] / (\sum t_i = 1,\ \sum \d t_i = 0),\]
i.e. the de Rham algebra of $k[t_0,\dots, t_n] / (\sum t_i = 1)$, the commutative dg algebra of polynomial functions on the $n$-simplex. The simplices $\Delta^\bullet$ form a cosimplicial simplicial set, so $\Omega_\bullet$ form a simplicial commutative dg algebra. Therefore, if $\g$ is a dg Lie algebra, $\g\otimes \Omega_\bullet$ is a simplicial dg Lie algebra.

\begin{defn}
Let $\g$ be a nilpotent dg Lie algebra.
\begin{itemize}
\item The \defterm{set of Maurer--Cartan elements of $\g$} is the set $\MC(\g)$ of elements $x\in\g$ of degree $1$ satisfying
\[\d x + \frac{1}{2}[x, x] = 0.\]

\item The \defterm{Maurer--Cartan space of $\g$} is the simplicial set
\[\uMC_\bullet(\g) = \MC(\g\otimes\Omega_\bullet).\]
\end{itemize}
\end{defn}

The term ``Maurer--Cartan space'' indicates that we have an $\infty$-groupoid, i.e. the simplicial set $\uMC(\g)$ is a Kan complex. This is a theorem of Hinich and Getzler, see \cite[Proposition 4.7]{Get}.

\begin{example}
Suppose $\g$ is a nilpotent Lie algebra concentrated in degree $0$. Then $\uMC(\g)$ is weakly equivalent to the nerve of the groupoid $\ast / \exp(\g)$, where $\exp(\g)$ is the unipotent group with Lie algebra $\g$. Thus, $\uMC(-)$ can be thought of as an integration functor.
\end{example}

\begin{defn}
Let $\g$ be a dg Lie algebra. It is \defterm{pronilpotent} if it admits a cofiltration
\[\g_0\twoheadleftarrow\g_1\twoheadleftarrow \dots\]
where $\g = \lim_n \g_n$ and $\g_n$ is nilpotent.
\end{defn}

\begin{defn}
Let $\g$ be a pronilpotent dg Lie algebra. The \defterm{Maurer--Cartan space of $\g$} is the simplicial set
\[\uMC(\g) = \lim_n \uMC(\g_n).\]
\end{defn}

Again, for a pronilpotent dg Lie algebra, $\uMC(\g)$ is a Kan complex by \cite[Proposition 4.1]{DR} and \cite[Proposition 2.6]{Yal}. Given a graded dg Lie algebra $\g$, its completion $\g^{\geq 2}$ in weights at least 2 is pronilpotent: it is obtained as the inverse limit of the system
\[\g^{\geq 2}/ \g^{\geq 3} \twoheadleftarrow \g^{\geq 2}/ \g^{\geq 4} \twoheadleftarrow \dots\]
of nilpotent dg Lie algebras.

Note that a morphism of graded Lie algebras $k(2)[-1]\rightarrow \g$ is the same as an element $x_2\in \g$ of weight $2$ and cohomological degree $1$ which satisfies $[x_2, x_2] = 0$ up to coherent higher homotopies. The following statement proved in \cite[Proposition 1.19]{MS1} shows that the higher homotopies can be encoded by finding $x = x_2 + x_3 + \dots$ where $x_n\in\g$ has weight $n$ such that $x$ satisfies the Maurer--Cartan equation.

\begin{prop}
Let $\g$ be a graded dg Lie algebra. Then we have a natural equivalence of $\infty$-groupoids
\[\Map_{\ialg_{\Lie}^{gr}}(k(2)[-1], \g)\cong \uMC(\g^{\geq 2}).\]
\end{prop}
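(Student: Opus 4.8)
The plan is to compute the left-hand mapping space by resolving the source. Equip the strict category $\alg_{\Lie}^{gr}$ of graded dg Lie algebras with its model structure transferred from graded complexes (as in \cite{Hin}), in which every object is fibrant. Then, choosing a cofibrant replacement $Q\xrightarrow{\sim} k(2)[-1]$ and using $\g\otimes\Omega_\bullet$ as a simplicial frame on the target, the $\infty$-categorical mapping space is modelled by the simplicial set $[m]\mapsto \Hom_{\alg_{\Lie}^{gr}}(Q, \g\otimes\Omega_m)$. The whole argument reduces to choosing $Q$ so that strict morphisms out of it are exactly Maurer--Cartan elements, and to matching the weight bookkeeping on the two sides; since everything is functorial in the target $\g$, a strict identification of these simplicial sets will upgrade to the desired equivalence of $\infty$-groupoids.

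First I would build $Q$ by hand as a \emph{semifree} graded dg Lie algebra. Because the Lie bracket has weight $-1$, a single generator of weight $2$ cannot absorb $[-,-]$, so I take the free graded Lie algebra on generators $\theta_n$ of weight $n$ and cohomological degree $1$, one for each $n\geq 2$, with the derivation determined by the Maurer--Cartan differential
\[\d\theta_n = -\frac{1}{2}\sum_{i+j=n+1}[\theta_i, \theta_j].\]
Here $i,j\geq 2$, so $[\theta_i,\theta_j]$ has weight $i+j-1=n$ and degree $2$, matching $\d\theta_n$, and $\d^2=0$ is precisely the graded Jacobi identity. As $Q$ is free on a graded complex with a differential raising word length, it is cofibrant. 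Its universal property is that, for any graded dg Lie algebra $\h$, a strict morphism $Q\to\h$ is the same as a family $x_n=f(\theta_n)$ of weight $n$ and degree $1$ with $\d x_n=-\frac{1}{2}\sum_{i+j=n+1}[x_i,x_j]$ for all $n$; these are exactly the weight components of the single equation $\d x+\frac{1}{2}[x,x]=0$ for $x=\sum_{n\geq 2}x_n$. Since $Q$ has generators only in weights $\geq 2$ and brackets stay in weights $\geq 3$, the image always lands in weights $\geq 2$, and the infinite sum $x$ lives in the completion, giving a natural identification $\Hom_{\alg_{\Lie}^{gr}}(Q,\h)\cong \MC(\h^{\geq 2})$.

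Assembling these, I would apply the universal property with $\h=\g\otimes\Omega_m$. As $\Omega_m$ is concentrated in weight $0$, the Maurer--Cartan equations remain weight-local and finite in each weight, so $\Hom_{\alg_{\Lie}^{gr}}(Q,\g\otimes\Omega_m)\cong \MC(\g^{\geq 2}\otimes\Omega_m)$. Letting $m$ vary, the right-hand side is by definition the simplicial set of $m$-simplices of $\uMC(\g^{\geq 2})$, and naturality in $\g$ yields the asserted equivalence $\Map_{\ialg_{\Lie}^{gr}}(k(2)[-1], \g)\cong \uMC(\g^{\geq 2})$.

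The step I expect to be the main obstacle is verifying that the map $Q\to k(2)[-1]$ sending $\theta_2\mapsto u$ and $\theta_n\mapsto 0$ for $n>2$ is a quasi-isomorphism, so that $Q$ is a genuine cofibrant replacement and the computation above computes the correct mapping space. Concretely one must show $\H^\bullet(Q)$ is one-dimensional, concentrated in weight $2$ and degree $1$. I would prove this by filtering $Q$ by weight: in each fixed weight $w$ only Lie words of length $\leq w-1$ occur, so the weight-$w$ summand is a finite complex, and a direct induction on $w$ (the cases $w=2$ and $w=3$ being immediate, the latter an isomorphism $k\theta_3\xrightarrow{\sim} k[\theta_2,\theta_2]$) shows acyclicity for $w\geq 3$; conceptually this is the Koszulity of the Lie operad, which guarantees that the cobar--bar construction of $k(2)[-1]$ is a cofibrant resolution. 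The secondary subtlety I would track throughout is the compatibility of the weight completion $(-)^{\geq 2}$ with the tensor factor $\Omega_m$, so that the strict Maurer--Cartan locus of $\g^{\geq 2}\otimes\Omega_m$ agrees with the honest space of morphisms out of $Q$.
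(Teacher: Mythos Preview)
The paper does not give its own proof of this proposition; it simply cites \cite[Proposition 1.19]{MS1}. Your approach---building a semifree cofibrant replacement $Q$ of $k(2)[-1]$ whose strict morphisms into $\g\otimes\Omega_m$ are exactly Maurer--Cartan elements of $\g^{\geq 2}\otimes\Omega_m$, and using $\g\otimes\Omega_\bullet$ as a simplicial frame---is correct and is essentially the argument in that reference. You have correctly isolated the one nontrivial step, namely that the map $Q\to k(2)[-1]$ is a quasi-isomorphism; the slick justification is indeed Koszulity of the Lie operad (equivalently, $Q$ is the bar--cobar resolution of the abelian graded Lie algebra $k(2)[-1]$), and the weight-by-weight induction you outline is the beginning of a direct check. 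Your remark on the compatibility of the weight completion with $\Omega_m$ is also to the point: since $\Omega_m$ sits in weight $0$, the Maurer--Cartan equation decouples into finitely many equations in each weight, so $\Hom_{\alg_{\Lie}^{gr}}(Q,\g\otimes\Omega_m)$ agrees on the nose with $\lim_N \MC\big((\g^{\geq 2}/\g^{\geq N})\otimes\Omega_m\big)$, which is the definition of $\uMC(\g^{\geq 2})_m$ used in the text.
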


\subsection{Globalization}
\label{sect:globalization}

So far we have defined the $\infty$-groupoid of $n$-shifted Poisson structures $\Pois(A, n)$ on a commutative dg algebra $A$. We are now going to sketch how this definition extends to the general setting of derived Artin stacks. Such an extension will be provided using the technology of \emph{formal localization} developed in \cite{CPTVV}. We refer the reader to \cite{PV} for a more detailed account of formal localization, while here we will just indicate some general ideas.

Let $\ich^{\leq 0}$ be the $\infty$-category of chain complexes of $k$-vector spaces concentrated in non-positive cohomological degrees (i.e. connective chain complexes). The $\infty$-category of connective commutative dg algebras is denoted by $\icalg(\ich^{\leq 0})$. There is a notion of \'{e}tale maps and covers in $\icalg(\ich^{\leq 0})$.

\begin{defn}$ $
\begin{itemize}
\item A \defterm{derived prestack} is a functor of $\infty$-categories $X\colon \icalg(\ich^{\leq 0})\rightarrow \cS$ where $\cS$ is the $\infty$-category of spaces.

\item A \defterm{derived stack} is a derived prestack $X$ which satisfies \'{e}tale descent: for any \'{e}tale cover $A\rightarrow B$ in $\icalg(\ich^{\leq 0})$ the natural morphism
\[X(A)\longrightarrow \Tot(\cosimp{X(B)}{X(B\otimes_A B)})\]
is an equivalence.
\end{itemize}
\end{defn}

The following definition is an informal rephrasing of \cite[Definition 1.3.1]{HAGII}:

\begin{defn}$ $
\begin{itemize}
\item A derived stack $X$ is \defterm{$(-1)$-geometric} if it is equivalent to an affine derived scheme, i.e. it is a representable functor $\icalg(\ich^{\leq 0})\rightarrow \cS$.

\item A derived stack $X$ is \defterm{$n$-geometric} if the diagonal $X\rightarrow X\times X$ is $(n-1)$-geometric and it admits a smooth cover $\coprod U_i\rightarrow X$ where $U_i$ are affine.

\item A derived stack $X$ is a \defterm{derived Artin stack} (sometimes just called a geometric stack) if it is $n$-geometric for some $n$.
\end{itemize}
\end{defn}

Alternatively, one can say that $X$ is $n$-geometric if there is a smooth groupoid
\[\simp{Y_0}{Y_1}\]
where each $Y_i$ is $(n-1)$-geometric and $X$ is equivalent to $\colim Y_\bullet$, the quotient of $Y_0$ by this groupoid.

Let us also recall the notion of a derived stack locally of finite presentation:

\begin{defn}
A derived stack $X$ is \defterm{locally of finite presentation} if for any filtered system of connective commutative dg algebras $A_i\in\icalg(\ich^{\leq 0})$ the natural morphism
\[\colim X(A_i)\longrightarrow X(\colim A_i)\]
is an equivalence.
\end{defn}

Given a commutative dg algebra $A\in\icalg(\ich^{\leq 0})$ and an $A$-module $M$, denote by $A\oplus M$ the corresponding square-zero extension. Let us recall that the cotangent complex $\bL_A\in\imod_A$ is defined by the universal property
\[\Map_{\imod_A}(\bL_A, M)\cong \Map_{\icalg(\ich^{\leq 0})_{/A}}(A, A\oplus M),\]
where $\icalg(\ich^{\leq 0})_{/A}$ is the $\infty$-category of connective commutative dg algebras with a map to $A$. Note that the right-hand side simply represents derivations from $A$ to $M$.

Similarly, for a derived stack $X$ one can define an analogous universal property for the cotangent complex $\bL_X\in\QCoh(X)$, but this object is not guaranteed to exist.

\begin{prop}
Suppose $X$ is a derived Artin stack. Then it admits a cotangent complex $\bL_X\in\QCoh(X)$. If we assume moreover that $X$ is locally of finite presentation, then $\bL_X$ is perfect.
\end{prop}
\begin{proof}
The first statement is given by \cite[Corollary 2.2.3.3]{HAGII}.

To prove the second statement, consider $f\colon S\rightarrow X$ where $S$ is an affine derived scheme. Consider a filtered diagram $\cM_\bullet\colon I\rightarrow \QCoh(S)$ and let us denote by \[S[\cM_i]=\Spec(\cO(S)\oplus \cM_i)\] the square-zero extension.

Denote by $\Map_{S/}(-, -)$ the mapping space in the $\infty$-category of derived stacks under $S$. By the universal property of the cotangent complex,
\[\Map_{S/}(S[\cM_i], X) \cong \Map_{\QCoh(S)}(f^*\bL_X, \cM_i).\]

Therefore, we get a diagram of spaces
\[
\xymatrix{
\colim \Map_{S/}(S[\cM_i], X) \ar[r] \ar^{\sim}[d] & \Map_{S/}(S[\colim \cM_i], X) \ar^{\sim}[d] \\
\colim \Map_{\QCoh(S)}(f^*\bL_X, \cM_i) \ar[r] & \Map_{\QCoh(S)}(f^*\bL_X, \colim \cM_i)
}
\]

So, if $X$ is locally of finite presentation, the top horizontal morphism is an equivalence and hence the bottom horizontal morphism is an equivalence, i.e. $f^*\bL_X\in\QCoh(S)$ is compact and hence dualizable.

By \cite[Corollary 4.6.1.11]{HA} this implies that $\bL_X\in\QCoh(X)$ itself is dualizable, i.e. it is perfect.
\end{proof}

\begin{example}
Suppose $G$ is an affine algebraic group. Its classifying stack $\B G$ is defined to be the colimit of the groupoid $G\rightrightarrows \pt$ in the $\infty$-category of derived stacks. Note that by construction it is a derived Artin stack. One has an equivalence of symmetric monoidal categories
\[\QCoh(\B G)\cong \Rep G,\]
where $\Rep G$ is the dg category of $G$-representations. Under this equivalence the cotangent complex $\bL_{\B G}\in\QCoh(\B G)$ corresponds to $\g^*[-1]\in\Rep G$, the coadjoint representation in cohomological degree $1$.
\label{ex:BG}
\end{example}

\begin{defn}
Suppose $X$ is a derived stack which admits a perfect cotangent complex. Its \defterm{tangent complex} is
\[\bT_X = \bL_X^\vee = \Hom(\bL_X, \cO_X)\in\QCoh(X).\]
\end{defn}

\begin{defn}
Let $X$ be a derived prestack. Its \defterm{de Rham prestack $X_{\dR}$} is given by
\[X_{\dR}(S) = X(\H^0(S)_{red}).\]
\end{defn}

The construction $X\mapsto X_{\dR}$ preserves limits, so if $X$ is a derived stack, then so is $X_{\dR}$. Note, however, that $X_{\dR}$ is almost never a derived Artin stack. Nevertheless, it admits a cotangent complex which is in fact trivial since by definition $X_{\dR}$ does not have infinitesimal deformations.

\begin{prop}
Let $X$ be a derived prestack. Then $\bL_{X_{\dR}} = 0$.
\label{prop:dRcotangentcomplex}
\end{prop}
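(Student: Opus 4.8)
The plan is to verify the vanishing directly from the universal property of the cotangent complex. Recall that for a point $f\colon S\rightarrow X_{\dR}$ with $S=\Spec A$ affine and $M$ a connective $A$-module, the square-zero extension $A\oplus M$ computes derivations via
\[\Map_{\QCoh(S)}(f^*\bL_{X_{\dR}}, M)\cong \fib_f\left(X_{\dR}(A\oplus M)\longrightarrow X_{\dR}(A)\right),\]
where the map is induced by the projection $A\oplus M\rightarrow A$. Since the text has already recorded that $X_{\dR}$ admits a cotangent complex, it therefore suffices to show that this projection induces an \emph{equivalence} $X_{\dR}(A\oplus M)\xrightarrow{\sim} X_{\dR}(A)$ for every such $A$ and $M$: the fibers are then contractible, so $f^*\bL_{X_{\dR}}\simeq 0$ for every point $f$, which is exactly the assertion $\bL_{X_{\dR}}=0$.

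First I would unwind the defining formula $X_{\dR}(-)=X(\H^0(-)_{red})$, which reduces the problem to a statement making no reference to $X$ at all: it is enough to prove that the projection $A\oplus M\rightarrow A$ induces an isomorphism of ordinary reduced rings
\[\H^0(A\oplus M)_{red}\xrightarrow{\sim} \H^0(A)_{red}.\]
Because $\H^0$ is additive and $M$ occupies the square-zero direction, we have $\H^0(A\oplus M)=\H^0(A)\oplus \H^0(M)$ as rings, with $\H^0(M)$ a square-zero ideal equal to the kernel of the projection.

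The one genuine point is the elementary observation that a square-zero ideal consists of nilpotent elements and hence lies in the nilradical. Thus $\H^0(M)\subset \mathrm{Nil}(\H^0(A\oplus M))$, so the projection descends to an isomorphism on reductions, yielding the displayed isomorphism and therefore the equivalence $X_{\dR}(A\oplus M)\simeq X_{\dR}(A)$. Everything else is a formal unwinding of the definitions of $X_{\dR}$ and of the cotangent complex; I expect no obstacle beyond bookkeeping, the content being concentrated entirely in the fact that passing to $\H^0(-)_{red}$ destroys the infinitesimal square-zero direction, so that $X_{\dR}$ carries no nontrivial first-order deformations.
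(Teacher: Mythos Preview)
Your argument is correct and is exactly the intended one: the paper does not supply a proof beyond the remark that ``by definition $X_{\dR}$ does not have infinitesimal deformations'', and your computation makes this precise by showing that $\H^0(A\oplus M)_{red}\cong \H^0(A)_{red}$, whence every fibre of $X_{\dR}(A\oplus M)\to X_{\dR}(A)$ is contractible. One small comment: you need not assume in advance that $X_{\dR}$ admits a cotangent complex, since your argument in fact shows directly that the derivation functor is corepresented by $0$.
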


\begin{example}
Suppose $X$ is a smooth scheme. In particular, it is formally smooth, i.e. $p\colon X\rightarrow X_{\dR}$ is an epimorphism. Therefore,
\[X_{\dR}\cong \colim \left(\simp{X}{X\times_{X_{\dR}} X}\right)\]

We can identify $X\times_{X_{\dR}} X\cong \widehat{X\times X}_\Delta$, the formal completion of $X\times X$ along the diagonal. Therefore, $X_{\dR}$ is a quotient of $X$ by its infinitesimal groupoid.
\end{example}

As the above example shows, $X_{\dR}$ is a kind of a formal derived stack. Such objects were studied in \cite{GR} and \cite[Section 2.1]{CPTVV}. Consider the projection $p\colon X\rightarrow X_{\dR}$. Even though $p$ is not affine, it is a family of formal affine stacks (see \cite[Definition 2.1.5]{CPTVV}). It was shown in \cite{CPTVV} that one can enhance $p_*\cO_X$ to a prestack of \emph{graded mixed commutative dg algebras} so that many geometric structures on $X$ can be recovered from it. We can summarize it in the following slogan:
\begin{itemize}
\item A formal affine stack is determined by its graded mixed commutative dg algebra of global functions.
\end{itemize}

\begin{defn}
A \defterm{graded mixed complex} is a graded complex $V=\bigoplus_n V(n)$ equipped with a square-zero endomorphism $\epsilon$ of weight $1$ and cohomological degree $1$.
\end{defn}

Denote by $\ich^{gr, \epsilon}$ the $\infty$-category of graded mixed complexes.

\begin{defn}
A \defterm{graded mixed commutative dg algebra} is a commutative algebra in $\ich^{gr, \epsilon}$.
\end{defn}

\begin{defn}
Suppose $A$ is a graded mixed commutative dg algebra.
\begin{itemize}
\item Its \defterm{realization} is
\[|A| = \prod_{n\geq 0} A(n)\]
equipped with the differential $\d_A+\epsilon$.

\item Its \defterm{Tate realization} is
\[|A|^t = \underset{m\rightarrow-\infty}\colim\prod_{n\geq m} A(n)\]
equipped with the differential $\d_A+\epsilon$.
\end{itemize}
\end{defn}

One can also encode Tate realization in terms of the usual realization as follows. There is a certain ind-object in graded mixed complexes $k(\infty)$ which is defined in \cite[Section 1.5]{CPTVV}. Given any other graded mixed complex $V$, we denote
\[V(\infty) = V\otimes k(\infty).\]
Then we can identify
\[|A|^t\cong |A(\infty)|.\]
Let us stress that the introduction of the twist by $k(\infty)$ will merely allow us to omit Tate realizations in what follows.

Given a formal affine stack $X$, \cite{CPTVV} introduce a graded mixed commutative dg algebra $\bD(X)$ such that
\[|\bD(X)|\cong \cO(X).\]

\begin{defn}
Let $X$ be a derived stack.

\begin{itemize}
\item The \defterm{crystalline structure sheaf} $\bD_{X_{\dR}}$ is a prestack of graded mixed commutative dg algebras on $X_{\dR}$ defined as
\[(\Spec A\rightarrow X_{\dR})\mapsto \bD(\Spec A).\]

\item The \defterm{sheaf of principal parts} $\cB_X$ is a prestack of graded mixed commutative dg algebras on $X_{\dR}$ defined as
\[(\Spec A\rightarrow X_{\dR})\mapsto \bD(X\times_{X_{\dR}} \Spec A).\]
\end{itemize}
\end{defn}

Note that by construction we have a morphism $\bD_{X_{\dR}}\rightarrow \cB_X$. Moreover, we have equivalences of prestacks of commutative dg algebras on $X_{\dR}$
\[|\cB_X|\cong p_*\cO_X,\qquad |\bD_{X_{\dR}}|\cong \cO_{X_{\dR}}.\]

If $X$ is a derived Artin stack locally of finite presentation, many geometric structures on $X$ can be reconstructed from $\bD_{X_{\dR}}\rightarrow \cB_X$: the dg category of perfect complexes $\Perf(X)$, cotangent complex $\bL_X$ etc. We refer to \cite{PV} and \cite[Section 2]{CPTVV} for details.

We are now ready to define shifted Poisson structures on a derived Artin stack.

\begin{defn}
Let $X$ be a derived Artin stack locally of finite presentation. The \defterm{$\infty$-groupoid $\Pois(X, n)$ of $n$-shifted Poisson structures on $X$} is
\[\Pois(X, n) = \Pois(\cB_X(\infty)/\bD_{X_{\dR}}(\infty), n).\]
\label{defn:nshiftedPoisson}
\end{defn}

In other words, an $n$-shifted Poisson structure on $X$ is defined to be a lift of $\cB_X(\infty)$ from a $\bD_{X_{\dR}}(\infty)$-linear commutative algebra to a $\bD_{X_{\dR}}(\infty)$-linear $\bP_{n+1}$-algebra.

\begin{remark}
In the definition of $\Pois(X, n)$ we consider $\cB_X(\infty)$ as a commutative algebra in the dg category of prestacks of graded mixed $\bD_{X_{\dR}}(\infty)$-modules on $X_{\dR}$. See also \cref{remark:generalmodelcategory}.
\end{remark}

\begin{remark}
One may also consider a ``non-Tate'', version of $n$-shifted Poisson structures on $X$ defined as $\Pois(\cB_X / \bD_{X_{\dR}}, n)$. Even though the two definitions agree for derived schemes, they are different in general and, for instance, \cref{prop:poissonpolyvectors} fails for $X=\B G$ as the non-Tate version does not ``see'' all polyvectors.
\end{remark}

In a way, \cref{defn:nshiftedPoisson} asserts that an $n$-shfited Poisson structure on $X$ is the same as a fiberwise $n$-shifted Poisson structure on $X\rightarrow X_{\dR}$. Such a definition is sensible since the natural inclusion of fiberwise $n$-shifted polyvector fields along $X\rightarrow X_{\dR}$ into all $n$-shifted polyvector fields on $X$ is an equivalence since $\bT_{X_{\dR}}=0$ by \cref{prop:dRcotangentcomplex}.

Similarly, under the same assumption one can define a graded $\bP_{n+2}$-algebra $\Pol(X, n)$ of $n$-shifted polyvectors on $X$ using $\cB_X$.

\begin{prop}
One has an equivalence of graded commutative dg algebras
\[\Pol(X, n)\cong \Gamma(X, \Sym(\bT_X[-n-1])).\]
\label{prop:polyvectorsgrcdga}
\end{prop}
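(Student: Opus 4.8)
The plan is to reduce the global statement to the affine case already understood, using the sheaf-of-principal-parts presentation of $X$ and the formal localization philosophy. By \cref{defn:nshiftedPoisson} and the discussion preceding it, the $n$-shifted polyvectors $\Pol(X, n)$ are defined fiberwise along $p\colon X\rightarrow X_{\dR}$ using $\cB_X$; concretely, $\Pol(X, n)$ is built as the center $\Z(\triv_{\bP_{n+1}}(\cB_X))$ in the appropriate $\infty$-category of graded mixed $\bD_{X_{\dR}}$-linear $\bP_{n+1}$-algebras on $X_{\dR}$. So the first step would be to invoke the affine-case proposition computing polyvectors as an $\Hom$-object, namely $\Pol(A, n)\cong \Hom_A(\Sym_A(\bL_A[n+1]), A)$, now interpreted relative to the base $\bD_{X_{\dR}}(\infty)$ rather than $k$. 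Dualizing $\Sym$ and using that $\bL_A$ is perfect (so that $\Sym_A(\bL_A[n+1])$ is dualizable in each weight), this $\Hom$-object becomes $\Sym_A(\bT_A[-n-1])$ as a graded commutative dg algebra, with the $\Sym$-grading matching the weight grading.

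Next I would globalize. The key input is that, under formal localization, $\bL_X$ and hence $\bT_X$ are recovered from the graded mixed algebra $\cB_X$ over $\bD_{X_{\dR}}$; more precisely, the \emph{relative} cotangent complex of $\cB_X$ over $\bD_{X_{\dR}}$, after realization, computes $\bL_X$. Thus the fiberwise polyvectors $\Sym_{\cB_X}(\T_{\cB_X/\bD_{X_{\dR}}}[-n-1])$ realize to $\Sym_{\cO_X}(\bT_X[-n-1])$. I would then take global sections over $X_{\dR}$, using the equivalence $|\cB_X|\cong p_*\cO_X$ together with the fact that $p_*$ followed by $\Gamma(X_{\dR}, -)$ recovers $\Gamma(X, -)$. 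The crucial geometric point, already flagged in the text, is that $\bT_{X_{\dR}} = 0$ by \cref{prop:dRcotangentcomplex}, so the relative tangent complex of $X$ over $X_{\dR}$ agrees with the absolute $\bT_X$; this is exactly what makes the fiberwise and absolute polyvectors coincide and justifies reading off $\Sym(\bT_X[-n-1])$ rather than a relative version.

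Assembling these, one obtains the chain of equivalences of graded commutative dg algebras
\[
\Pol(X, n)\cong \Gamma\bigl(X_{\dR}, \Sym_{\cB_X}(\T_{\cB_X/\bD_{X_{\dR}}}[-n-1])\bigr)\cong \Gamma(X, \Sym(\bT_X[-n-1])),
\]
where the first equivalence is the relative affine computation applied sheafwise and the second is realization combined with $\bT_{X_{\dR}}=0$. I expect the main obstacle to lie in the globalization step rather than the affine computation: one must check that taking the center (equivalently, the $\Hom$-object computing polyvectors) commutes with the descent/realization functors implicit in formal localization, i.e. that forming $\Pol$ fiberwise over $X_{\dR}$ and then realizing agrees with the genuine global object. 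This requires that the relevant $\Hom_A(\Sym_A(\bL_A[n+1]), A)$ be compatible with base change along the \'{e}tale covers presenting $X$ and with the graded mixed structure, which is where perfectness of $\bL_X$ (guaranteeing dualizability weight-by-weight) does the real work. Since the statement is only about the underlying graded commutative dg algebra and not the full $\bP_{n+2}$-structure, I would not need to track the bracket through these manipulations, which simplifies the bookkeeping considerably.
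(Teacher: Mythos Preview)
The paper does not supply a proof of this proposition: it is stated as part of the review of \cite{CPTVV} and left unproved in these lecture notes. So there is no in-paper argument to compare against.

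That said, your outline is the right one and matches the strategy in \cite{CPTVV}. The affine identification $\Pol(A,n)\cong \Hom_A(\Sym_A(\bL_A[n+1]),A)$ followed by weight-by-weight dualization using perfectness of $\bL_A$, then globalization through $\cB_X$ over $\bD_{X_{\dR}}$, and finally the observation $\bT_{X_{\dR}}=0$ to pass from relative to absolute tangent complex, are exactly the ingredients. You also correctly flag the genuine technical point: compatibility of the internal $\Hom$ (hence the center construction) with realization and with descent along the affine presentation of $X$. One small caution worth making explicit: the equivalence $\Hom_A(\Sym_A(\bL_A[n+1]),A)\cong \Sym_A(\bT_A[-n-1])$ holds componentwise in the weight grading, but the left-hand side is a priori a product over weights while the right-hand side is a sum; since $\Pol$ is by definition a graded object this is harmless, but it is the reason the argument must be carried out weight-by-weight and why perfectness of $\bL_X$ is essential rather than merely convenient.
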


\begin{prop}
One has an equivalence of spaces
\[\Pois(X, n)\cong \Map_{\ialg_{\Lie}^{gr}}(k(2)[-1], \Pol(X, n)[n+1]).\]
\label{prop:poissonpolyvectors}
\end{prop}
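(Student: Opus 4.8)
The plan is to deduce the statement from the affine equivalence of \cite{Mel} by applying it not over the base field $k$, but internally in the symmetric monoidal $\infty$-category attached to $X_{\dR}$ by formal localization. Let $\cM$ denote the symmetric monoidal $\infty$-category of prestacks of graded mixed $\bD_{X_{\dR}}(\infty)$-modules on $X_{\dR}$, so that $\cB_X(\infty)$ is a commutative algebra object of $\cM$. By \cref{remark:generalmodelcategory} the entire formalism of the preceding subsections goes through with $k$ replaced by the commutative algebra $\bD_{X_{\dR}}(\infty)$ and $\ich$ replaced by $\cM$: the $\infty$-groupoid of $n$-shifted Poisson structures, the construction of polyvectors as a center via Poisson additivity (\cref{thm:poissonadditivity}), and the equivalence of \cite{Mel} all have $\cM$-linear analogues. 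By \cref{defn:nshiftedPoisson} the left-hand side of the proposition is precisely $\Pois(\cB_X(\infty)/\bD_{X_{\dR}}(\infty), n)$, the relative $n$-shifted Poisson structures on $\cB_X(\infty)$ computed in $\cM$.

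First I would apply the $\cM$-linear version of the equivalence of \cite{Mel} to the commutative algebra $\cB_X(\infty)$. Writing $\Z_\cM(\triv_{\bP_{n+1}}(\cB_X(\infty)))$ for the relative center --- which by the $\cM$-linear form of \cref{thm:poissonadditivity} is a graded $\bP_{n+2}$-algebra in $\cM$, call it $\Pol_\cM$ --- this yields
\[\Pois(X, n)\cong \Map_{\ialg_{\Lie}^{gr}(\cM)}\big(k(2)[-1], \Pol_\cM[n+1]\big),\]
the mapping space now taken among graded dg Lie algebras internal to $\cM$ and the source being the image in $\cM$ of $k(2)[-1]$.

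The remaining step is to transport this equivalence through the Tate realization functor and to identify $\Pol_\cM$ with the global polyvectors. By construction $\Pol(X, n)$ is the realization of $\Pol_\cM$, and its underlying graded commutative dg algebra is $\Gamma(X, \Sym(\bT_X[-n-1]))$ by \cref{prop:polyvectorsgrcdga}; passing through the Tate realization --- equivalently, working with $\cB_X(\infty)$ rather than $\cB_X$ --- is essential, since only it records all of the polyvectors. By the Maurer--Cartan description of \cite{MS1}, both the internal mapping space above and the target $\Map_{\ialg_{\Lie}^{gr}}(k(2)[-1], \Pol(X, n)[n+1])$ are computed as $\uMC$ of the respective truncations in polyvector weights $\geq 2$, so it suffices to check that Tate realization intertwines these two Maurer--Cartan spaces.

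The hard part will be exactly this last compatibility. Tate realization is assembled from an infinite product over the internal mixed weights, whereas the Maurer--Cartan space is formed with respect to the polyvector weight grading by tensoring with the polynomial de Rham algebras $\Omega_\bullet$ and totalizing; a priori these two operations need not commute. This is the technical heart of formal localization in \cite{CPTVV}. Its resolution rests on two points: the restriction to polyvector weights $\geq 2$ together with the pronilpotent cofiltration makes all the products in sight convergent, and the twist by $k(\infty)$ is engineered precisely so that Tate realization is exact and lax symmetric monoidal on the relevant subcategory. Granting this, Tate realization carries the internal Maurer--Cartan space over $X_{\dR}$ to $\uMC$ of $(\Pol(X, n)[n+1])^{\geq 2}$, which is the asserted mapping space, completing the identification.
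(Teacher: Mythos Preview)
The paper does not actually prove this proposition: it is stated as a consequence of the formal localization package developed in \cite{CPTVV}, with no argument given beyond the surrounding remarks that $\Pois(X,n)$ and $\Pol(X,n)$ are defined via $\cB_X(\infty)/\bD_{X_{\dR}}(\infty)$. So there is no proof in the paper to compare yours against.

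That said, your outline is the right one and matches how the result is obtained in \cite{CPTVV}: one runs Melani's affine argument internally in the symmetric monoidal $\infty$-category $\cM$ attached to $X_{\dR}$, obtaining the equivalence with the mapping space computed in $\ialg_{\Lie}^{gr}(\cM)$, and then one must reconcile this with the mapping space in $\ialg_{\Lie}^{gr}$ over $k$ appearing in the proposition. You correctly flag the genuine content here, namely that Tate realization (a limit over internal mixed weights) has to commute with the Maurer--Cartan construction (a limit over polyvector weights, after tensoring with $\Omega_\bullet$). One small clarification: the issue is not really that realization might fail to be lax symmetric monoidal, but rather that global sections over $X_{\dR}$ and the infinite product defining $|{-}|^t$ must commute with the filtered colimit implicit in $\uMC$ and with the polynomial simplices; the pronilpotent truncation to weights $\geq 2$ is what makes this go through. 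With that understood, your sketch is sound.
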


\begin{example}
Suppose $X$ is a (possibly singular) scheme over $k$. Its cotangent complex $\bL_X$ is connective, so the weight $p$ part of $\Pol(X, n)$ is concentrated in cohomological degrees $[p(n+1), \infty)$. Therefore, by \cref{prop:poissonpolyvectors} the spaces $\Pois(X, n)$ are contractible for $n>0$, i.e. every $n$-shifted Poisson structure for $n>0$ is canonically zero. The same computation shows that $\Pois(X, 0)$ is identified with the \emph{set} of ordinary Poisson structures on $X$.
\label{ex:poissonscheme}
\end{example}

\begin{remark}
The book \cite{GR} presents a slightly different formalism for treating formal stacks in terms of inf-schemes. Given a derived Artin stack $X$, the morphism $p\colon X\rightarrow X_{\dR}$ is proper, so $p_*$ is left adjoint to the symmetric monoidal functor
\[p^!\colon \IndCoh(X_{\dR})\rightarrow \IndCoh(X),\]
where $\IndCoh(-)$ is the $\infty$-category of \emph{ind-coherent sheaves}. In particular, $p_*$ acquires an oplax symmetric monoidal structure. Therefore, we get a cocommutative coalgebra
\[p_*\omega_X\in\IndCoh(X_{\dR}).\]
One may define an $n$-shifted Poisson structure on $X$ as a lift of the cocommutative structure on $p_*\omega_X$ to a $\bP_{n+1}$-coalgebra structure. It would be interesting to compare such a definition to \cref{defn:nshiftedPoisson}. Such a definition might be useful to deal with derived stacks not locally of finite presentation.
\end{remark}

Recall that in \cref{sect:additivity} we have defined a forgetful functor
\[\forget_{n+1}^{n-m+1}\colon \ialg_{\bP_{n+1}}\rightarrow \ialg_{\bP_{n-m+1}}.\]
Since it is compatible with the forgetful functors to $\icalg$, it induces a forgetful map
\begin{equation}
\Pois(X, n)\longrightarrow \Pois(X, n-m)
\label{eq:forgetshiftstack}
\end{equation}
for any derived stack $X$. By \cref{prop:poissonforget2}, the forgetful map \eqref{eq:forgetshiftstack} is trivial for $m\geq 2$, i.e. it sends any $n$-shifted Poisson structure on $X$ to the zero $(n-m)$-shifted Poisson structure on $X$. We refer to \cref{conj:atiyahbracket} and \cref{prop:forgetshiftBG} for examples of the forgetful map $\Pois(X, n)\rightarrow \Pois(X, n-1)$.

\section{Coisotropic and Lagrangian structures}

In the second lecture we explain how to define $n$-shifted coisotropic structures in terms of relative Poisson algebras and, after a brief reminder on shifted sympletic geometry, explain the relationship between shifted Poisson and shifted symplectic structures.

\subsection{Relative Poisson algebras}
\label{sect:relpoisson}

Recall the equivalence of $\infty$-categories from \cref{thm:poissonadditivity}:
\[\ialg_{\bP_{n+1}}\cong \ialg(\ialg_{\bP_n}).\]

Given a symmetric monoidal $\infty$-category $\cC$ we denote by $\ilmod(\cC)$ the $\infty$-category of pairs $(A, M)$ of an associative algebra $A$ in $\cC$ and an $A$-module $M$. By definition (see \cite[Definition 5.3.1.6]{HA}), if $\cC$ admits centralizers, we can identify $\ilmod(\cC)^\sim$ with the space of pairs of an associative algebra $A\in\ialg(\cC)$, an object $M\in\cC$ and a morphism of associative algebras $A\rightarrow \Z(M)$. We will now give a similar characterization of the whole $\infty$-category $\ilmod(\ialg_{\bP_n})$.

\begin{defn}
Let $B\in\alg_{\bP_n}$ be a $\bP_n$-algebra. Its \defterm{strict center} is
\[\Z^{str}(B) = \Hom_{\mod_B}(\Sym_B(\Omega^1_B[n]), B)\]
with the differential twisted by $[\pi_B, -]$.
\end{defn}

The strict center is equipped with a $\bP_{n+1}$-algebra structure where the bracket is a generalization of the Schouten bracket of polyvector fields. The term ``strict center'' is explained by the following proposition which will be proved in a future paper.

\begin{prop}
Assume $B\in\alg_{\bP_n}$ is cofibrant as a commutative dg algebra. Then we have an equivalence of objects of $\ialg_{\bP_{n+1}}\cong \ialg(\ialg_{\bP_n})$:
\[\Z^{str}(B)\cong \Z(B),\]
where $\Z(B)$ is the center of $B$ in the sense of \cref{defn:center}.
\end{prop}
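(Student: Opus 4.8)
The plan is to show that the explicit complex $\Z^{str}(B)$ satisfies the universal property defining the center $\Z(B)=\Z(\id_B)$, and then to verify the resulting comparison map is an equivalence by checking it on underlying complexes. First I would produce the universal data on $\Z^{str}(B)$: the unit $1\to \Z^{str}(B)$ picks out the constant polyvector $1\in B\subset \Z^{str}(B)$; the action $\Z^{str}(B)\otimes B\to B$ is contraction of a polyvector field against a function (the projection onto the weight-zero component of the evaluation $\Hom_{\mod_B}(\Sym_B(\Omega^1_B[n]), B)\otimes B\to B$); and the required commutativity datum with $\id_B$ is supplied by the fact that this action is through $B$-linear, Schouten-bracket-compatible endomorphisms. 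Since $\Z^{str}(B)$ carries its Schouten $\bP_{n+1}$-structure, this assembles into the data of a candidate centralizer, and the universal property of $\Z(B)$ furnishes a canonical map $\Z^{str}(B)\to \Z(B)$ in $\ialg_{\bP_{n+1}}\cong \ialg(\ialg_{\bP_n})$.

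Next I would reduce the claim that this map is an equivalence to a statement about underlying complexes. Equivalences in $\ialg(\ialg_{\bP_n})$ are detected on underlying objects of $\ialg_{\bP_n}$, and equivalences in $\ialg_{\bP_n}$ are in turn detected on underlying chain complexes; hence it suffices to prove that $\Z^{str}(B)\to \Z(B)$ is a quasi-isomorphism after forgetting all algebraic structure. The underlying complex of the abstract center $\Z(B)$ is the $\bP_n$-algebra deformation (Poisson-cohomology) complex of $B$, i.e. the endomorphisms of $B$ in its enveloping $\bP_n$-bimodule category. Using Koszul duality for the operad $\bP_n$ (self-dual up to the shift $[n]$), this deformation complex is computed by the derived $\Hom_B(\Sym_B(\bL_B[n]), B)$ equipped with the Chevalley--Eilenberg differential built from $\pi_B$. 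This is exactly where cofibrancy of $B$ enters: for $B$ cofibrant as a commutative dg algebra, $\Omega^1_B$ strictly represents the cotangent complex $\bL_B$, so $\Sym_B(\Omega^1_B[n])$ is a genuine cofibrant resolution and $\Z^{str}(B)=\Hom_{\mod_B}(\Sym_B(\Omega^1_B[n]), B)$ computes the derived Hom with no further replacement. Matching the twist $[\pi_B,-]$ on the strict side with the Koszul-dual differential on the abstract side then identifies the two underlying complexes, and one checks the comparison map of the first paragraph realizes this identification.

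The main obstacle is this middle step: rigorously identifying the underlying complex of the abstractly-defined center $\Z(B)$ with the polyvector complex, and matching the differentials. It rests on having a usable model for the $\bP_n$-analogue of the enveloping bimodule category and on the operadic Koszul self-duality of $\bP_n$, together with the (separately established) existence of centralizers in $\ialg_{\bP_n}(\imod_R)$. Equally delicate is verifying that the contraction action and the Schouten bracket cohere into the full associative-algebra-object structure required by the universal property, rather than merely a strict-center differential; tracking these higher coherences, and the compatibility of the shifts and Koszul signs governing $\Sym_B(\Omega^1_B[n])$, is where the real work lies.
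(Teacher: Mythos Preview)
The paper does not prove this proposition. Immediately before its statement the paper writes ``The term `strict center' is explained by the following proposition which will be proved in a future paper,'' and no argument is supplied. There is therefore nothing to compare your proposal against.

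For what it is worth, your outline follows the natural strategy one would expect such a future proof to take: exhibit the universal data on $\Z^{str}(B)$, obtain a comparison map from the universal property of $\Z(B)$, and then check it is a quasi-isomorphism by identifying the underlying complex of the abstract center with the polyvector complex via operadic Koszul duality for $\bP_n$. You have also correctly located where cofibrancy is used (so that $\Omega^1_B$ models $\bL_B$) and where the genuine difficulties lie (producing the full coherent associative-algebra-object structure on $\Z^{str}(B)$ in $\ialg_{\bP_n}$, and rigorously computing the underlying complex of $\Z(B)$, which in turn presupposes the existence theorem for centralizers in $\ialg_{\bP_n}(\imod_R)$ that the paper also states without proof). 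Those steps are not filled in here, but the paper does not fill them in either.
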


We are now ready to define relative Poisson algebras introduced in \cite{Sa1} under the name ``coisotropic morphism''.

\begin{defn}
A \defterm{$\bP_{[n+1, n]}$-algebra} is a triple $(A, B, F)$, where
\begin{itemize}
\item $A$ is a $\bP_{n+1}$-algebra,

\item $B$ is a $\bP_n$-algebra,

\item $F\colon A\rightarrow \Z^{str}(B)$ is a morphism of $\bP_{n+1}$-algebras.
\end{itemize}
\end{defn}

The above definition gives rise to a two-colored dg operad $\bP_{[n+1, n]}$ of relative Poisson algebras. The following statement is proved in \cite[Theorem 3.8]{Sa2}.

\begin{prop}
One has an equivalence of $\infty$-categories
\[\ialg_{\bP_{[n+1, n]}}\cong \ilmod(\ialg_{\bP_n}).\]
\end{prop}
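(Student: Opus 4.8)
The plan is to realize $\ilmod(\ialg_{\bP_n})$ through the centralizer description recalled above and then to translate each piece of data into the language of $\bP$-algebras using Poisson additivity. Set $\cC = \ialg_{\bP_n}$, which admits centralizers by the theorem stated above (in the case $R = k$). This places us in the setting of \cite[Definition 5.3.1.6]{HA}: the underlying space $\ilmod(\cC)^\sim$ is identified with the space of triples consisting of an associative algebra $A \in \ialg(\cC)$, an object $B \in \cC$, and a morphism of associative algebras $A \to \Z(B)$, where $\Z(B) = \Z(\id_B)$ is the center of $B$ in the sense of \cref{defn:center}.

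I would then translate each of these data. By Poisson additivity (\cref{thm:poissonadditivity}) the symmetric monoidal equivalence $\ialg(\cC) = \ialg(\ialg_{\bP_n}) \cong \ialg_{\bP_{n+1}}$ shows that the associative algebra $A$ is exactly a $\bP_{n+1}$-algebra, and that the center $\Z(B)$, being itself an object of $\ialg(\ialg_{\bP_n})$, is a $\bP_{n+1}$-algebra. Using the identification $\Z^{str}(B) \cong \Z(B)$ valid when $B$ is cofibrant as a commutative dg algebra, I would replace the abstractly defined center by the explicit strict center $\Z^{str}(B) = \Hom_{\mod_B}(\Sym_B(\Omega^1_B[n]), B)$ with its Schouten-type $\bP_{n+1}$-structure. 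Under these identifications the algebra map $A \to \Z(B)$ becomes precisely a morphism of $\bP_{n+1}$-algebras $F \colon A \to \Z^{str}(B)$, which is exactly the data of a $\bP_{[n+1, n]}$-algebra. This matches the objects on the two sides.

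To upgrade this to an equivalence of full $\infty$-categories I would argue that the two-colored operad $\bP_{[n+1, n]}$ is constructed precisely so that a $\bP_{[n+1, n]}$-algebra encodes the $A$-module structure governing $\ilmod(\cC)$: a morphism $(A, B, F) \to (A', B', F')$ should consist of a $\bP_{n+1}$-map $A \to A'$, a $\bP_n$-map $B \to B'$, and a homotopy witnessing compatibility of the two actions on $B'$ obtained by restriction. Since the additivity equivalence is symmetric monoidal, it carries centers to centers, so the comparison functor may be built from the universal property of the centralizer, and its full faithfulness and essential surjectivity reduce to the corresponding statements for the $A$-part, for the $B$-part, and for the mapping spaces $\Map_{\ialg(\cC)}(A, \Z(B))$ of algebra maps into centralizers.

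The hard part is exactly this last upgrade. The centralizer characterization in \cite{HA} is stated only at the level of the underlying $\infty$-groupoid $\ilmod(\cC)^\sim$, so to obtain a functorial statement one must verify that the entire construction is natural in families, that the strict-center model $\Z^{str}(B)$ is functorial in $B$ and computes the genuine center compatibly with morphisms, and that cofibrant replacements can be chosen coherently so that the hypothesis of the strict-center identification does not obstruct functoriality. Organizing these compatibilities is where the real work lies; this is carried out in \cite[Theorem 3.8]{Sa2}.
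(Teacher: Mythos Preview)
The paper does not give its own proof of this proposition: it simply records the statement and cites \cite[Theorem 3.8]{Sa2}. Your proposal is essentially an expanded version of the motivational discussion the paper places \emph{before} the proposition (the centralizer description of $\ilmod(\cC)^\sim$, the strict center, and the identification $\Z^{str}(B)\simeq\Z(B)$), and you likewise end by deferring the genuine $\infty$-categorical content to the same reference. In that sense your write-up is aligned with what the paper does.

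One caution about the logical architecture. Your sketch takes as inputs two results that the paper explicitly defers to a \emph{future} paper: that $\ialg_{\bP_n}$ admits centralizers, and that $\Z^{str}(B)\simeq\Z(B)$ for cofibrant $B$. These are presented in the text as forward references, not as established facts available at this point, so invoking them does not yield an independent argument and risks inverting the dependency order: the equivalence $\ialg_{\bP_{[n+1,n]}}\cong\ilmod(\ialg_{\bP_n})$ is proved in \cite{Sa2} by direct operadic and model-categorical means (bar--cobar, Koszul duality for the colored operad), and the centralizer interpretation is rather a reformulation obtained \emph{a posteriori}. Your outline is conceptually correct as a description of what the equivalence says, but it is not the route by which it is actually established, and the ``hard part'' you flag (coherent functoriality, compatibility of strict and homotopical centers) is essentially the whole proof.
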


One can introduce a forgetful functor (see \cite[Section 3.5]{MS1})
\begin{equation}
\U\colon \ialg_{\bP_{[n+1, n]}}\longrightarrow \ialg_{\bP^{nu}_{n+1}}
\label{eq:relpoissonforget}
\end{equation}
from relative Poisson algebras to non-unital $\bP_{n+1}$-algebras. Given $(A, B)\in\ialg_{\bP_{[n+1, n]}}$, this construction has the following properties:
\begin{enumerate}
\item One can identify as non-unital commutative dg algebras
\[\U(A, B)\cong \fib(A\rightarrow B).\]

\item One has a fiber sequence of Lie algebras
\[B[n-1]\longrightarrow \U(A, B)[n]\longrightarrow A[n],\]
i.e. we have a pullback square
\[
\xymatrix{
B[n-1] \ar[r] \ar[d] & \U(A, B) \ar[d] \\
0 \ar[r] & A[n]
}
\]
\end{enumerate}

The natural projection $\Z(B)\rightarrow B$ is a morphism of commutative dg algebras, so we also get a forgetful functor
\[\alg_{\bP_{[n+1, n]}}\longrightarrow \Arr(\calg),\]
where $\Arr(\cC) = \Fun(\Delta^1, \cC)$. After localization it induces a functor of $\infty$-categories
\[\ialg_{\bP_{[n+1, n]}}\longrightarrow \Arr(\icalg).\]

\begin{defn}
Let $f\colon A\rightarrow B$ be a morphism of commutative dg algebras. The \defterm{space $\Cois(f, n)$ of $n$-shifted coisotropic structures on $f$} is the fiber of
\[\ialg_{\bP_{[n+1, n]}}^{\sim}\longrightarrow \Arr(\icalg)^{\sim}\]
at $f\in\Arr(\icalg)$.
\end{defn}

By construction we have forgetful maps
\[
\xymatrix{
& \Cois(f, n) \ar[dl] \ar[dr] & \\
\Pois(B, n-1) && \Pois(A, n)
}
\]

We have a natural involution
\[\opp\colon \Pois(B, n-1)\longrightarrow \Pois(B, n-1)\]
induced by the automorphism of the operad $\bP_n$ given by flipping the sign of the Poisson bracket. We denote by $\overline{B}$ the opposite $\bP_n$-algebra.

The next statement shows that a derived intersection of $n$-shifted coisotropic morphisms carries an $(n-1)$-shifted Poisson structure.

\begin{prop}
Suppose $A\rightarrow B_1$ and $A\rightarrow B_2$ are two morphisms of commutative dg algebras. We have a morphism of spaces
\[\Cois(A\rightarrow B_1, n)\times_{\Pois(A, n)} \Cois(A\rightarrow B_2, n) \longrightarrow \Pois(B_1\otimes_A B_2, n-1).\]

Moreover, given two $n$-shifted coisotropic morphisms $A\rightarrow B_1$ and $A\rightarrow B_2$, the projection $\overline{B_1}\otimes B_2\rightarrow B_1\otimes_A B_2$ is compatible with the $(n-1)$-shifted Poisson structures on both sides.
\label{prop:affinecoisotropicintersection}
\end{prop}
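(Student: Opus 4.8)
The plan is to package both coisotropic morphisms into a single relative Poisson structure on a map out of $\overline{B_1}\otimes B_2$ and then use the general relationship between relative $\bP_{[n+1,n]}$-algebras and lower Poisson structures on the ``source'' already recorded above. Concretely, by the equivalence $\ialg_{\bP_{[n+1,n]}}\cong \ilmod(\ialg_{\bP_n})$ and the description of the space of coisotropic structures as a fiber of $\ialg_{\bP_{[n+1,n]}}^\sim\to\Arr(\icalg)^\sim$, a point of the left-hand side of the asserted map consists of a $\bP_{n+1}$-structure on $A$ together with two $\bP_{[n+1,n]}$-lifts of $A\to B_1$ and $A\to B_2$ that agree on $A$. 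I first observe that the involution $\opp$ turns the coisotropic datum on $A\to B_1$ into a coisotropic datum on $\overline{A}\to\overline{B_1}$, so that, after applying $\opp$ to the first factor, both morphisms live over the \emph{same} $\bP_{n+1}$-algebra structure (now $\overline{A}$ versus $A$, but these are identified once we pass to the tensor product, as explained below).

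The key construction is the tensor product of the two relative Poisson algebras. Since $\ialg_{\bP_n}$ is symmetric monoidal, so is $\ilmod(\ialg_{\bP_n})$, and the tensor product of the module data $(A,B_1)$ and $(A,B_2)$ produces a relative Poisson algebra whose ``algebra'' part is $A\otimes A$ acting on $B_1\otimes B_2$. The diagonal $A\to A\otimes A$ is a map of $\bP_{n+1}$-algebras, and restricting the action along it (together with the multiplication $B_1\otimes_k B_2\to B_1\otimes_A B_2$) yields a $\bP_{[n+1,n]}$-algebra of the form $(A, B_1\otimes_A B_2)$: here I use that $B_1\otimes_A B_2$ is the relative tensor product, which is exactly the coequalizer encoding the coincidence of the two $A$-actions. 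Feeding this through the forgetful map $\ialg_{\bP_{[n+1,n]}}\to \Pois(B_1\otimes_A B_2, n-1)$ (the left leg of the forgetful triangle displayed before \cref{prop:affinecoisotropicintersection}) produces the desired $(n-1)$-shifted Poisson structure, and the whole assignment is functorial, hence defines the claimed morphism of spaces. The second assertion about $\overline{B_1}\otimes B_2\to B_1\otimes_A B_2$ then follows because this projection is precisely the structure map of the relative Poisson algebra just constructed: the $\bP_n$-structure on $\overline{B_1}\otimes B_2$ is the tensor product of the opposite structure on $B_1$ with that on $B_2$, and by \cref{prop:oppositepoisson} the additivity equivalence intertwines the sign-flip involution with passage to the opposite algebra, which is exactly why the \emph{opposite} must appear on the first factor for the actions to be compatible.

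I expect the main obstacle to be verifying that the relative tensor product $B_1\otimes_A B_2$ carries the correct $\bP_n$-structure as a genuine object of $\ialg_{\bP_n}$, rather than merely a strict bracket on the nose. The subtlety is homotopical: one must check that restricting the $(A\otimes A)$-action to the diagonal and then passing to the coequalizer is compatible with the centralizer/strict-center descriptions underlying $\ialg_{\bP_{[n+1,n]}}\cong\ilmod(\ialg_{\bP_n})$, and that the resulting module structure is the one whose center receives the bracket. Making this precise requires tracking the $\bP_{n+1}$-algebra maps $A\to \Z^{str}(B_i)$ through the tensor product and identifying the induced map $A\to \Z^{str}(B_1\otimes_A B_2)$; the sign bookkeeping controlled by \cref{prop:oppositepoisson} is what guarantees the two contributions do not obstruct one another. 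Once this compatibility is in place, the remaining steps are formal consequences of the symmetric monoidal structure on $\ilmod(\ialg_{\bP_n})$ and the universal properties of centralizers.
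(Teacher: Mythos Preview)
Your overall architecture matches the paper's: use Poisson additivity to regard the two coisotropic structures as module data in $\ialg_{\bP_n}$, combine them by a tensor construction, and then project to the underlying $\bP_n$-algebra to read off the $(n-1)$-shifted Poisson structure on $B_1\otimes_A B_2$. The difference is in how the combining step is carried out, and this is where your argument has a gap.

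The paper does not take the external tensor product and then ``restrict along the diagonal $A\to A\otimes A$''. Instead it uses \cref{prop:oppositepoisson} to identify $\ilmod(\ialg_{\bP_n})\simeq\irmod(\ialg_{\bP_n})$, converting the first coisotropic datum $(A,B_1)$ from a left $A$-module to a right $A$-module in $\ialg_{\bP_n}$ (this is precisely where the opposite $\overline{B_1}$ appears). One then has a right $A$-module and a left $A$-module over the \emph{same} associative algebra object $A\in\ialg(\ialg_{\bP_n})$, and the relative tensor product functor $\ibimod(\ialg_{\bP_n})\to\ialg_{\bP_n}$ of \cite[Section 4.4]{HA} produces $B_1\otimes_A B_2$ directly as an object of $\ialg_{\bP_n}$. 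The compatibility with $\overline{B_1}\otimes B_2\to B_1\otimes_A B_2$ then drops out of the bar resolution computing the relative tensor product.

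Your step ``restrict the $A\otimes A$-action along the diagonal, together with the multiplication $B_1\otimes_k B_2\to B_1\otimes_A B_2$'' does not accomplish this. Restriction along $A\to A\otimes A$ leaves the underlying object $B_1\otimes B_2$ unchanged and equips it with the wrong $A$-action (schematically $a\cdot(b_1\otimes b_2)=ab_1\otimes ab_2$, not the action whose coequalizer is the relative tensor product). There is no a priori reason that the commutative coequalizer $B_1\otimes_A B_2$ inherits a $\bP_n$-structure from this restricted module, and your closing paragraph correctly flags this as the unresolved point. The paper's route avoids the problem entirely: once one side is a right module, the relative tensor product is a colimit \emph{in} $\ialg_{\bP_n}$, so no separate verification is needed. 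In short, replace ``external tensor plus diagonal restriction'' by ``flip one side to a right module via \cref{prop:oppositepoisson}, then take the relative tensor product'', and your sketch becomes the paper's proof.
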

\begin{proof}
By \cref{prop:oppositepoisson}, we have a diagram of $\infty$-categories
\[
\xymatrix{
\ilmod(\ialg_{\bP_n}) \ar^{\sim}[r] \ar[d] & \irmod(\ialg_{\bP_n}) \ar[d] \\
\ialg_{\bP_n} \ar^{\opp}[r] & \ialg_{\bP_n}
}
\]

The relative tensor product \cite[Section 4.4]{HA} gives rise to horizontal functors
\[
\xymatrix{
\irmod(\ialg_{\bP_n})\times_{\ialg(\ialg_{\bP_n})} \ilmod(\ialg_{\bP_n}) \ar^-{\sim}[r] \ar[d] & \ibimod(\ialg_{\bP_n}) \ar[r] \ar[d] & \ialg_{\bP_n} \ar[d] \\
\irmod(\icalg)\times_{\ialg(\icalg)} \ilmod(\icalg) \ar^-{\sim}[r] & \ibimod(\icalg) \ar[r] & \icalg
}
\]

Using \cite[Proposition 2.4.3.9]{HA} we can identify
\[\ilmod(\icalg)\cong \Arr(\icalg),\qquad \irmod(\icalg)\cong \Arr(\icalg)\]
and the claim follows by passing to fibers of the vertical functors.
\end{proof}

Given a morphism $f\colon L\rightarrow X$ of derived Artin stacks locally of finite presentation we can define the space $\Cois(f, n)$ using the formalism of formal localization as in \cref{sect:globalization}. Moreover, the previous statement admits a generalization to derived Artin stacks, see \cite[Theorem 3.6]{MS2}.

\begin{prop}
Suppose $L_1, L_2\rightarrow X$ are two morphisms of derived Artin stacks locally of finite presentation. Then we have a diagram of spaces
\[
\xymatrix{
\Cois(L_1\rightarrow X, n)\times_{\Pois(X, n)} \Cois(L_2\rightarrow X, n) \ar[r] \ar[d] & \Pois(L_1\times_X L_2, n-1) \ar[d] \\
\Pois(L_1, n-1)\times \Pois(L_2, n-1) \ar^{\opp\times\id}[r] & \Pois(L_1, n-1)\times \Pois(L_2, n-1)
}
\]
\label{prop:coisotropicintersection}
\end{prop}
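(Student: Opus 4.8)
The plan is to globalize the affine statement \cref{prop:affinecoisotropicintersection} by means of the formal localization formalism recalled in \cref{sect:globalization}. Recall that for a derived Artin stack $X$ locally of finite presentation the space $\Pois(X, n)$ is computed from the sheaf of principal parts $\cB_X(\infty)$ regarded as a commutative algebra relative to $\bD_{X_{\dR}}(\infty)$, that $\Cois(f, n)$ is defined by the same formalism for a morphism $f$, and that by \cref{remark:generalmodelcategory} the entire affine theory of Poisson and coisotropic structures makes sense for commutative algebras in any sufficiently nice symmetric monoidal dg category $\cM$. The strategy is therefore to pin down a single category $\cM$ into which all four stacks $X, L_1, L_2, L_1\times_X L_2$ contribute their principal-parts algebras, and then to run the proof of \cref{prop:affinecoisotropicintersection} verbatim inside $\cM$.

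First I would fix the common base. Since the de Rham functor preserves limits, $(L_1\times_X L_2)_{\dR}\cong (L_1)_{\dR}\times_{X_{\dR}}(L_2)_{\dR}$, and I would work in the symmetric monoidal $\infty$-category $\cM$ of prestacks of graded mixed $\bD_{Y_{\dR}}(\infty)$-modules on $Y_{\dR}$, where $Y=L_1\times_X L_2$. Pulling the $(\infty)$-twisted sheaves of principal parts back along the natural maps $Y_{\dR}\to (L_i)_{\dR}$ and $Y_{\dR}\to X_{\dR}$ produces commutative algebra objects of $\cM$; write them $\mathbb{B}_X$, $\mathbb{B}_{L_1}$, $\mathbb{B}_{L_2}$. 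The morphisms $L_i\to X$ equip each $\mathbb{B}_X\to\mathbb{B}_{L_i}$ with a relative (i.e. $\bP_{[n+1, n]}$-algebra) structure, which is exactly an $n$-shifted coisotropic datum in $\cM$.

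The key geometric input is the compatibility of the sheaf of principal parts with fiber products, namely an equivalence of commutative algebras in $\cM$
\[\cB_{L_1\times_X L_2}(\infty)\cong \mathbb{B}_{L_1}\otimes_{\mathbb{B}_X}\mathbb{B}_{L_2}.\]
This reduces to the fact that the functor $\bD$ producing the graded mixed commutative dg algebra of a formal affine stack carries fiber products of formal completions to relative tensor products, using that the formal completion of a fiber product along the diagonal is the fiber product of the formal completions, together with base change for $\bD_{X_{\dR}}(\infty)$-linear structures along $Y_{\dR}\to X_{\dR}$. Granting this identification, the proof of \cref{prop:affinecoisotropicintersection}, which invokes only \cref{prop:oppositepoisson}, relative tensor products in $\ialg(\ialg_{\bP_n}(\cM))$, and the identification $\ilmod(\icalg(\cM))\cong\Arr(\icalg(\cM))$, applies word for word in $\cM$ and yields the upper horizontal arrow together with the opposite-algebra compatibility recorded in the bottom row (the $\opp$ on the $L_1$-factor being the image of the passage to $\overline{B_1}$).

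The main obstacle is establishing the displayed identification of principal parts and, more generally, making the three distinct de Rham bases cooperate. The subtlety is that $\cB_X$, $\cB_{L_1}$, and $\cB_{L_2}$ naturally live over $X_{\dR}$, $(L_1)_{\dR}$, and $(L_2)_{\dR}$ respectively, so before any relative tensor product can be formed they must be pulled back to the common base $Y_{\dR}$; checking that these pullbacks are compatible with the graded mixed structure, the Tate twist by $k(\infty)$, and the relative Poisson brackets is where the real work lies. Once this formal-localization bookkeeping is in place, the remainder is a transcription of the affine argument, as carried out in \cite[Theorem 3.6]{MS2}.
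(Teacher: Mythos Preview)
The paper does not supply its own proof of this proposition: immediately before the statement it simply writes ``the previous statement admits a generalization to derived Artin stacks, see \cite[Theorem 3.6]{MS2}'' and then records the result without argument. Your proposal is therefore not competing with a proof in the paper but rather sketching the content of the cited reference, which you yourself acknowledge at the end.

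As a sketch of that argument, your outline is on the right track: the globalization does proceed via formal localization, by pulling everything back to a common de Rham base so that the affine proof of \cref{prop:affinecoisotropicintersection} can be rerun internal to a single symmetric monoidal dg category. You have also correctly isolated the genuine technical point, namely the identification $\cB_{L_1\times_X L_2}(\infty)\simeq \mathbb{B}_{L_1}\otimes_{\mathbb{B}_X}\mathbb{B}_{L_2}$ after base change, and the compatibility of the graded mixed and Tate structures under pullback. These are exactly the ingredients one needs to verify, and they are not automatic; in \cite{MS2} this bookkeeping occupies a nontrivial amount of space. So while nothing in your proposal is wrong, it is honest of you to flag that the ``real work'' lies precisely in the step you have only asserted.
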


This is known as a coisotropic intersection theorem.

\subsection{Poisson morphisms}

\begin{defn}
Let $f\colon A_1\rightarrow A_2$ be a morphism of commutative dg algebras. The \defterm{space $\Pois(f, n)$ of $n$-shifted Poisson structures on $f$} is the fiber of
\[\Arr(\ialg_{\bP_{n+1}})^{\sim} \longrightarrow \Arr(\icalg)^{\sim}\]
at $f\in\Arr(\icalg)$.
\end{defn}

One can similarly define the space $\Pois(f, n)$ for a morphism of derived Artin stacks locally of finite presentation.

Recall that a morphism of smooth Poisson schemes $f\colon X_1\rightarrow X_2$ is Poisson iff its graph $g\colon X_1\rightarrow \overline{X_1}\times X_2$ is coisotropic, where $\overline{X_1}$ is $X_1$ equipped with the opposite Poisson structure. We have a similar statement in the derived context as well, see \cite[Theorem 2.8]{MS2}.

\begin{prop}
Let $f\colon X_1\rightarrow X_2$ be a morphism of derived Artin stacks locally of finite presentation and let $g\colon X_1\rightarrow X_1\times X_2$ be its graph. One has a fiber square of spaces
\[
\xymatrix{
\Pois(f, n) \ar[r] \ar[d] & \Pois(X_1, n)\times \Pois(X_2, n) \ar^{\id\oplus\opp}[d] \\
\Cois(g, n) \ar[r] & \Pois(X_1\times X_2, n)
}
\]
\end{prop}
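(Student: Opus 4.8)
The plan is to realise both $\Pois(f,n)$ and $\Cois(g,n)$ inside the same ambient $\infty$-category, to construct the comparison from the classical graph trick, and to deduce the cartesian square by the relative tensor product argument already used for \cref{prop:affinecoisotropicintersection}. As in the definitions of \cref{sect:globalization}, all the spaces in the square are fibers of forgetful maps to commutative arrows, and by formal localization the whole statement reduces to the corresponding assertion for algebras in the symmetric monoidal category $\cC$ of prestacks of graded mixed $\bD_{X_{\dR}}(\infty)$-modules (cf. \cref{remark:generalmodelcategory}); I will therefore argue with a general symmetric monoidal $\cC$ admitting centralizers, writing $A_i$ for the object refining $\cO(X_i)$. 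By \cref{thm:poissonadditivity} a Poisson structure on $f$ is a morphism of associative algebras $\phi\colon A_2\to A_1$ in $\ialg_{\bP_n}(\cC)$ refining $f^*$, and by the equivalence $\ialg_{\bP_{[n+1,n]}}\simeq\ilmod(\ialg_{\bP_n})$ a coisotropic structure on $g$ is a triple $(R,M,R\to\Z(M))$ in $\ilmod(\ialg_{\bP_n})$ refining $g^*$.

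The comparison map $\Pois(f,n)\to\Cois(g,n)$ is the graph construction. From $\phi\colon A_2\to A_1$ I form the $(A_1,A_2)$-bimodule whose underlying object is $A_1$, with left action by multiplication and right action through $\phi$; equivalently a left $R$-module for $R=A_1\otimes A_2^{\opp}$, which is precisely a point of $\ilmod(\ialg_{\bP_n})$. By \cref{prop:oppositepoisson} the opposite algebra $A_2^{\opp}$ corresponds to the $\bP_{n+1}$-algebra with flipped bracket, so the ambient $n$-shifted Poisson structure on $X_1\times X_2$ carried by $R$ is exactly the image of a pair of structures under $\id\oplus\opp$; forgetting down to commutative algebras recovers the graph $g^*$. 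This makes the left and bottom composite agree with the top and right composite, so the square commutes.

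It then remains to show that the square is cartesian, i.e. that
\[\Pois(f,n)\simeq\Cois(g,n)\times_{\Pois(X_1\times X_2,n)}\big(\Pois(X_1,n)\times\Pois(X_2,n)\big).\]
Unwinding the fibers, and using that fixing the underlying commutative arrow $g^*$ rigidifies the module $M$ to have underlying commutative algebra $\cO(X_1)$ and underlying action $g^*$, this is the assertion that the graph functor identifies morphisms $A_2\to A_1$ in $\ialg(\ialg_{\bP_n})$ with those $R$-modules whose ambient algebra is the product $A_1\otimes A_2^{\opp}$. I would prove this exactly as in \cref{prop:affinecoisotropicintersection}: pass through $\irmod(\ialg_{\bP_n})$ by \cref{prop:oppositepoisson}, realise the graph module as a point of $\ibimod(\ialg_{\bP_n})$ lying over $(A_1,A_2)$, and use the identifications $\ilmod(\icalg)\simeq\Arr(\icalg)\simeq\irmod(\icalg)$ of \cite[Proposition 2.4.3.9]{HA} to detect, after forgetting to $\icalg$, that such a bimodule is the graph of an algebra map.

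The main obstacle is this last point made coherent: one must check that a coisotropic structure on $g$ whose ambient structure is the product $A_1\otimes A_2^{\opp}$ is necessarily of graph type, meaning its module is corepresentable (free of rank one on the $A_1$-side) and its residual $A_2$-action is an algebra morphism $\phi$, rather than arbitrary module data over $A_1\otimes A_2^{\opp}$; the full $\infty$-categorical naturality of this matching, compatibly with all forgetful functors, is the heart of the argument. The passage from this affine statement to arbitrary derived Artin stacks locally of finite presentation then amounts to running the construction in the graded mixed $\bD_{X_{\dR}}(\infty)$-linear setting and matching $\cB_{X_1\times X_2}$ with the appropriate product over $(X_1\times X_2)_{\dR}$; this bookkeeping is carried out in \cite[Theorem 2.8]{MS2} and parallels the globalization of \cref{prop:affinecoisotropicintersection} to \cref{prop:coisotropicintersection}.
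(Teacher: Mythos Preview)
The paper does not actually supply a proof of this proposition; it is stated with a reference to \cite[Theorem 2.8]{MS2} and no argument is given in the text. Your sketch is along the same lines as the proof in \cite{MS2}: reduce via formal localization to an algebraic statement, use Poisson additivity to rewrite $\Pois(f,n)$ as a space of associative algebra maps in $\ialg_{\bP_n}$, identify $\Cois(g,n)$ with left modules via $\ialg_{\bP_{[n+1,n]}}\simeq\ilmod(\ialg_{\bP_n})$, and invoke \cref{prop:oppositepoisson} and \cite[Proposition 2.4.3.9]{HA} to match the two.

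The one place where you hesitate --- whether a coisotropic structure on $g$ with ambient algebra $A_1\otimes A_2^{\opp}$ is automatically ``of graph type'' --- is not a genuine obstacle. In any monoidal $\infty$-category, a morphism $A_2\to A_1$ of associative algebras is the same thing as an $(A_1,A_2)$-bimodule $M$ together with an identification of left $A_1$-modules $A_1\xrightarrow{\sim} M$; equivalently, the unit map $A_1\to M$ is an equivalence. This last condition is a property of the underlying object, and it is pinned down once you fix the underlying commutative arrow to be $g^*\colon A_1\otimes A_2\to A_1$ (there the unit map is the identity on $A_1$). Since the forgetful functor $\ialg_{\bP_n}\to\icalg$ is conservative, the bimodule in $\ialg_{\bP_n}$ is forced to be the graph of an algebra map, and the cartesian square follows. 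With that clarification your argument is complete and coincides with the approach taken in the reference the paper cites.
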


\subsection{Relative polyvectors}

In this section we generalize results of \cref{sect:polyvectors} to the relative setting.

Let $f\colon A\rightarrow B$ be a morphism of commutative dg algebras. The center $\Z(\triv_{\bP_n}B)$ is a graded $\bP_{n+1}$-algebra and we get a natural morphism of graded $\bP_{n+1}$-algebras
\[\triv_{\bP_{n+1}}(A)\longrightarrow \Z(\triv_{\bP_n}(B)).\]

Also recall that we denote by $\Pol(B/A, n-1)$ the graded $\bP_{n+1}$-algebra of polyvectors of $B\in\icalg(\imod_A)$.

\begin{prop}
One has an equivalence of graded $\bP_{n+1}$-algebras
\[\Pol(B/A, n-1)\cong \Z(\triv_{\bP_{n+1}}(A)\rightarrow \Z(\triv_{\bP_n}(B))).\]
\end{prop}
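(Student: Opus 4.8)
The plan is to recognise both sides as centers and to reduce the proposition to a general compatibility between a center computed in a module category and the centralizer of the associated structure map. By definition the relative polyvectors are the center
\[\Pol(B/A, n-1)\cong \Z_{\ialg_{\bP_n}^{gr}(\imod_A)}(\triv_{\bP_n}(B))\]
of $B$ viewed as a graded $\bP_n$-algebra internal to $A$-modules, while the absolute polyvectors are the center $\Z(\triv_{\bP_n}(B))=\Pol(B,n-1)$ computed in $\cM:=\ialg_{\bP_n}^{gr}$. Through the graded form of Poisson additivity (\cref{thm:poissonadditivity}), $\ialg_{\bP_{n+1}}^{gr}\cong\ialg(\cM)$, so both $\triv_{\bP_{n+1}}(A)$ and $\Pol(B,n-1)$ are associative algebra objects of $\cM$ and the map $\triv_{\bP_{n+1}}(A)\to\Z(\triv_{\bP_n}(B))$ is a morphism of such. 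I would first check that this map is precisely the one classifying, via the universal property of the center, the action of $A$ that makes $B$ an $A$-algebra.

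The key step is the following general lemma: for a symmetric monoidal $\infty$-category $\cC$ admitting centralizers, a commutative algebra $A\in\cC$, and an $\cO$-algebra $M$ in $\mod_A(\cC)$, one has
\[\Z_{\mod_A(\cC)}(M)\cong \Z_\cC\big(A\to\Z_\cC(M)\big),\]
the right-hand side being the centralizer of the structure map. I would prove this by Yoneda. For a test object $T\in\ialg(\mod_A(\cC))$, that is, an associative algebra in $\cC$ with a central map from $A$, a $T$-module structure on $M$ over $A$ is the same datum as a $T$-module structure on $M$ in $\cC$ whose restriction along $A\to T$ agrees with the given $A$-module structure on $M$. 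By the universal property of the absolute center this is an algebra map $T\to\Z_\cC(M)$ in $\ialg(\cC)_{A/}$, and by the universal property of the centralizer such a map under $A$ is the same as a map $T\to\Z_\cC(A\to\Z_\cC(M))$. Chaining these natural equivalences identifies the functors corepresented by the two sides, whence the lemma.

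Applying the lemma with $\cC=\cM$, commutative algebra $\triv_{\bP_{n+1}}(A)$ and $M=\triv_{\bP_n}(B)$, and using the identification $\mod_{\triv_{\bP_{n+1}}(A)}(\cM)\cong\ialg_{\bP_n}^{gr}(\imod_A)$ of the module category, turns the right-hand side of the proposition into the relative center, i.e. into $\Pol(B/A, n-1)$; the graded $\bP_{n+1}$-algebra structures match because on both sides it is the canonical algebra structure carried by a center or a centralizer, transported through additivity.

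The main obstacle is not the formal Yoneda argument but the operadic bookkeeping underlying it. Concretely, I must justify that $\triv_{\bP_{n+1}}(A)$ is genuinely a commutative algebra object of $\cM$ and that its modules form, as a symmetric monoidal $\infty$-category, the category $\ialg_{\bP_n}^{gr}(\imod_A)$ of $A$-linear graded $\bP_n$-algebras; this base-change compatibility through additivity is where the real work lies. I would also invoke, rather than reprove, the existence of centralizers in $\cM$ and in $\ialg_{\bP_n}^{gr}(\imod_A)$ provided by the existence theorem stated above. Finally, to be safe I would confirm compatibility with the forgetful functors to graded commutative dg algebras by comparing both sides with the formulas $\Pol(B,n-1)\cong\Hom_B(\Sym_B(\bL_B[n]),B)$ and $\Pol(B/A,n-1)\cong\Hom_B(\Sym_B(\bL_{B/A}[n]),B)$: the cofiber sequence $f^*\bL_A\to\bL_B\to\bL_{B/A}$ exhibits the centralizer, on underlying graded commutative algebras, as the polyvectors along the relative cotangent complex $\bL_{B/A}$.
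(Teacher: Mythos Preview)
The paper states this proposition without proof, so there is nothing to compare against directly. Your overall strategy---identify $\Pol(B/A,n-1)$ as the center of $\triv_{\bP_n}(B)$ computed in $A$-linear graded $\bP_n$-algebras, rewrite the right-hand side as a centralizer in $\ialg(\cM)$ with $\cM=\ialg_{\bP_n}^{gr}$ via additivity, and bridge the two with a general ``center in a module category equals centralizer of the structure map'' lemma---is the natural one and is how one would expect such a statement to be proved.

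Two places deserve tightening. First, in your Yoneda step you are implicitly passing between two different universal properties. The centralizer $\Z(f)$ of $f\colon A\to B$ in a symmetric monoidal $\infty$-category $\cD$ corepresents morphisms $T\otimes A\to B$ restricting to $f$ along the unit, whereas you want morphisms $T\to B$ in $\cD_{A/}$. These are not the same for arbitrary $A$; the passage uses that $\triv_{\bP_{n+1}}(A)$ is a \emph{commutative} algebra object of $\ialg_{\bP_{n+1}}^{gr}$ (equivalently, after additivity, an $\bE_\infty$-object of $\cM$), so that $T\otimes A$ is the coproduct of $T$ and $A$ in the category of algebras under $A$, and the two functors agree. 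You gesture at this when you say ``a commutative algebra $A\in\cC$'', but you should make explicit why $\triv_{\bP_{n+1}}(A)$ is commutative in the relevant sense: it follows because $\triv_{\bP_{n+1}}$ is symmetric monoidal and $A\in\icalg$ is already $\bE_\infty$. Second, your notation $\Z_\cC(A\to\Z_\cC(M))$ is ambiguous: the outer centralizer is taken in $\ialg_{\bP_{n+1}}^{gr}\cong\ialg(\cM)$, not in $\cM$ itself, and it is worth keeping the ambient categories straight since the statement is precisely about matching $\bP_{n+1}$-structures.

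You correctly flag that the substantive input is the equivalence $\mod_{\triv_{\bP_{n+1}}(A)}(\cM)\simeq\ialg_{\bP_n}^{gr}(\imod_A)$, i.e.\ the compatibility of Poisson additivity with change of base ring; this is precisely the relative form of \cref{thm:poissonadditivity} (cf.\ the remark following it that the proof extends to a general base), and you should cite it as such rather than leave it as an unjustified identification. With these clarifications your argument goes through.
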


From the previous statement and the universal property of centralizers we obtain that
\[(\Pol(A, n), \Pol(B/A, n-1))\in \ilmod(\ialg_{\bP_{n+1}}^{gr}).\]

\begin{defn}
Let $f\colon A\rightarrow B$ be a morphism of commutative dg algebras. The \defterm{algebra of relative $n$-shifted polyvector fields} is
\[\Pol(f, n) = \U(\Pol(A, n), \Pol(B/A, n-1))\in\ialg_{\bP^{nu}_{n+1}}^{gr}.\]
\end{defn}

Given this definition, we have the following explicit way to compute the space of $n$-shifted coisotropic structures.

\begin{prop}
Let $f\colon A\rightarrow B$ be a morphism of commutative dg algebras. One has an equivalence of spaces
\[\Cois(f, n)\cong \Map_{\ialg_{\Lie}^{gr}}(k(2)[-1], \Pol(f, n)[n+1]).\]
\end{prop}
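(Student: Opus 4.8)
The plan is to establish this as the coisotropic refinement of the absolute polyvector description $\Pois(A, n)\cong\Map_{\ialg_{\Lie}^{gr}}(k(2)[-1], \Pol(A, n)[n+1])$ recalled above, by combining the relative center computation with a fiber-sequence argument over the absolute case. Throughout I would use the identification of $\Map_{\ialg_{\Lie}^{gr}}(k(2)[-1], \g)$ with $\uMC(\g^{\geq 2})$ from the Maurer--Cartan proposition above, so that the target reduces to matching a Maurer--Cartan space with $\Cois(f, n)$.

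First I would unwind the definitions. By construction $\Cois(f, n)$ is the fiber of $\ialg_{\bP_{[n+1,n]}}^{\sim}\to\Arr(\icalg)^{\sim}$ over $f$, and under the equivalence $\ialg_{\bP_{[n+1,n]}}\cong\ilmod(\ialg_{\bP_n})$ a point of $\Cois(f, n)$ is a lift of the pair $(A, B)$ — regarded as the trivial relative Poisson algebra with module map $f$ — to an honest relative Poisson algebra. Via the centralizer characterization of $\ilmod(\cdot)^{\sim}$ recalled above, such lifts are governed by the relative center, which by the relative center computation above is exactly the pair $(\Pol(A, n), \Pol(B/A, n-1))\in\ilmod(\ialg_{\bP_{n+1}}^{gr})$. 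The functor $\U$ packages this pair into the single graded non-unital Poisson algebra $\Pol(f, n)$, whose underlying shifted graded dg Lie algebra $\Pol(f, n)[n+1]$ plays the role of tangent Lie algebra of the deformation problem.

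Next I would run the matching through the absolute case. Properties (1)--(2) of $\U$ give a fiber sequence of shifted graded dg Lie algebras
\[\Pol(B/A, n-1)[n]\longrightarrow \Pol(f, n)[n+1]\longrightarrow \Pol(A, n)[n+1].\]
Applying $\Map_{\ialg_{\Lie}^{gr}}(k(2)[-1], -)$, which sends fiber sequences of pronilpotent (weight $\geq 2$) graded Lie algebras to fiber sequences of spaces, yields a fibration whose base is $\Pois(A, n)$ by Melani's theorem. On the other side, the forgetful functor $\ilmod(\ialg_{\bP_n})\to\ialg(\ialg_{\bP_n})\cong\ialg_{\bP_{n+1}}$ induces the map $\Cois(f, n)\to\Pois(A, n)$, and I would identify its fiber over a fixed $\bP_{n+1}$-structure on $A$ with the space of compatible $\bP_n$-module deformations of $B$. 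The shift $[n]$ on the fiber term is the correct one, since the absolute formula specialized to $n-1$ reads $\Pois(B, n-1)\cong\Map_{\ialg_{\Lie}^{gr}}(k(2)[-1], \Pol(B, n-1)[n])$; here $\Pol(B/A, n-1)$ replaces $\Pol(B, n-1)$ precisely because $B$ is being deformed as a module over the $\bP_{n+1}$-algebra $A$ rather than absolutely. Matching the two fibrations over the common base $\Pois(A, n)$, and checking compatibility with the second forgetful map $\Cois(f, n)\to\Pois(B, n-1)$, gives the equivalence.

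The hard part will be this last identification: showing that the fiber of $\Cois(f, n)\to\Pois(A, n)$ is computed by $\Pol(B/A, n-1)$, i.e. establishing the relative analogue of Melani's theorem for the module factor, and verifying that the resulting equivalence of fibrations respects the weight grading and the degree shift by $n+1$. This amounts to unwinding the universal property of the strict center $\Z^{str}(B)$ together with the $\U$-construction at the level of deformation complexes, which is where the genuine operadic content of the statement resides; the surrounding bookkeeping — pronilpotent completions, Maurer--Cartan spaces, and the rotation relating properties (1) and (2) of $\U$ — is then formal given the absolute case.
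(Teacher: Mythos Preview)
The paper does not actually prove this proposition; it is stated without proof as part of a survey of results from \cite{MS1} (see the disclaimer in the introduction that technical details are omitted). There is therefore nothing in the paper to compare your argument against.

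Your outline is nonetheless a reasonable route to the result, and it is worth flagging where the genuine content lies versus what is formal. The fiber-sequence strategy works, but the step ``$\Map_{\ialg_{\Lie}^{gr}}(k(2)[-1],-)$ sends fiber sequences of pronilpotent graded Lie algebras to fiber sequences of spaces'' needs a model where $\Pol(f,n)[n+1]^{\geq 2}\to\Pol(A,n)[n+1]^{\geq 2}$ is a strict surjection, and the fiber over a fixed $\pi_A\in\Pois(A,n)$ is then the Maurer--Cartan space of $\Pol(B/A,n-1)[n]^{\geq 2}$ \emph{twisted} by $\pi_A$, not with its original differential. You gesture at this in your last paragraph, but it is the whole point rather than residual bookkeeping: the module deformations of $B$ over the $\bP_{n+1}$-algebra $A$ are governed by the $\pi_A$-twisted relative polyvectors. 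The proof in \cite{MS1} takes a more direct line, building an explicit strict graded dg Lie algebra model for $\Pol(f,n)[n+1]$ and verifying by hand that its weight $\geq 2$ Maurer--Cartan elements are exactly the $\bP_{[n+1,n]}$-lifts of $(A,B,f)$; this avoids the fibration comparison but trades it for an operadic computation of the deformation complex of the coloured operad $\bP_{[n+1,n]}$.
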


As in \cref{sect:globalization}, we can generalize relative polyvectors to morphisms of derived Artin stacks.

\begin{prop}
Let $f\colon L\rightarrow X$ be a morphism of derived Artin stacks locally of finite presentation. The image of the pair $(\Pol(X, n), \Pol(L/X, n-1))$ under the functor $\ialg^{gr}_{\bP_{[n+2, n+1]}}\rightarrow \Arr(\icalg^{gr})$ is equivalent to the composite
\[\Gamma(X, \Sym(\bT_X[-n-1]))\longrightarrow \Gamma(L, \Sym(f^*\bT_X[-n-1]))\longrightarrow \Gamma(L, \Sym(\bT_{L/X}[-n])).\]
\end{prop}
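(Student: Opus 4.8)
The plan is to reduce to the affine case and then identify the underlying morphism of graded commutative dg algebras on generators. First I would observe that both endpoints are already pinned down: by \cref{prop:polyvectorsgrcdga} we have $\Pol(X, n)\cong \Gamma(X, \Sym(\bT_X[-n-1]))$, and the same computation applied to the morphism $f$ (using the relative cotangent complex $\bL_{L/X}$ in place of $\bL_X$, so that the relative polyvectors are $\Hom_B(\Sym_B(\bL_{B/A}[n]), B)$ in the affine model) gives $\Pol(L/X, n-1)\cong \Gamma(L, \Sym(\bT_{L/X}[-n]))$. Thus the only thing to determine is the morphism between these two algebras produced by the forgetful functor $\ialg^{gr}_{\bP_{[n+2, n+1]}}\rightarrow \Arr(\icalg^{gr})$. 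Since the relative polyvectors on stacks are assembled from their affine values via the formal localization formalism of \cref{sect:globalization}, and the forgetful functor is applied prestack-wise, it suffices to identify this morphism in the affine case $f\colon A\rightarrow B$; the statement for stacks then follows by taking global sections over $X_{\dR}$.

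In the affine case, recall that $\Pol(B/A, n-1)$ is the centralizer $\Z(\phi)$ of the morphism $\phi\colon \triv_{\bP_{n+1}}(A)\rightarrow \Z(\triv_{\bP_n}(B))$, while $\Pol(A, n) = \Z(\triv_{\bP_{n+1}}(A))$ is the center of the source of $\phi$. The left $\ilmod$-structure used to form the pair comes from the composition morphism $\Z(\triv_{\bP_{n+1}}(A))\otimes \Z(\phi)\rightarrow \Z(\phi)$ of centralizers, so under the forgetful functor the underlying arrow in $\Arr(\icalg^{gr})$ is the action of $\Pol(A, n)$ on the unit $1\rightarrow \Pol(B/A, n-1)$. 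Hence the morphism to be identified is the graded commutative dg algebra map $\Pol(A, n)\rightarrow \Pol(B/A, n-1)$ obtained by acting on $1$.

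Next I would compute this map on generators. As a graded commutative dg algebra, $\Pol(A, n)\cong \Sym_A(\bT_A[-n-1])$ is freely generated over its weight-$0$ part $A$ by $\bT_A[-n-1]$ in weight $1$, so the map is determined by its weight-$0$ and weight-$1$ components. The weight-$0$ component is forced to be the structure map $f\colon A\rightarrow B$. The weight-$1$ component is a $B$-linear map $f^*\bT_A[-n-1]\rightarrow \bT_{B/A}[-n]$, equivalently a map $f^*\bT_A[-1]\rightarrow \bT_{B/A}$, and I claim it is the shift of the connecting map of the dual of the cotangent sequence
\[f^*\bL_A\longrightarrow \bL_B\longrightarrow \bL_{B/A},\]
namely $\partial\colon f^*\bT_A\rightarrow \bT_{B/A}[1]$. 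Assembling the two components, the map factors as pullback along $f$ followed by the map induced on $\Sym$ by $\partial[-1]$, which is exactly the claimed composite
\[\Gamma(X, \Sym(\bT_X[-n-1]))\longrightarrow \Gamma(L, \Sym(f^*\bT_X[-n-1]))\longrightarrow \Gamma(L, \Sym(\bT_{L/X}[-n])).\]

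The main obstacle is the weight-$1$ identification: one must unwind the strict models of the centralizer $\Z(\phi)$ and of the relative polyvector complex, and check that acting by a vector field $v\in\bT_A$ on the unit, followed by the projection to the relative polyvectors, reproduces the connecting map $\partial$ up to the expected sign and shift. Concretely, this amounts to matching the relative Lie-derivative (Schouten) action encoded in the centralizer with the boundary map of the tangent cofiber sequence $\bT_{B/A}\rightarrow \bT_B\rightarrow f^*\bT_A$. Once the weight-$1$ component is verified, freeness of $\Sym$ propagates the identification to all weights and completes the proof.
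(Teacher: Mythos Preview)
The paper does not prove this proposition; these are lecture notes and the statement is recorded without argument, with the detailed construction deferred to the references \cite{MS1}, \cite{MS2}. So there is no proof in the paper to compare against directly.

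Your plan is sound and follows the natural strategy. The endpoints are indeed pinned down by \cref{prop:polyvectorsgrcdga} and its relative analog, and the reduction to the affine case via formal localization is the correct move since both $\Pol$ and the forgetful functor are assembled pointwise over $X_{\dR}$. The further reduction to weights $0$ and $1$ via the freeness of $\Sym$ over the weight-$0$ part is also correct, and the weight-$0$ component is forced to be $f$.

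The one place where your sketch is thin is precisely where you flag it: the weight-$1$ identification. Your description of the underlying arrow as ``action on the unit'' is right---this is exactly the composite $\Pol(A,n)\to \Z^{str}(\Pol(B/A,n-1))\to \Pol(B/A,n-1)$, the second map being the projection to weight $0$ in the internal $\Sym$---but to actually see that a vector field $v\in\bT_A$ lands on the boundary class in $\bT_{B/A}[1]$ you should work with an explicit strict model rather than the abstract centralizer. In the cofibrant situation one can take the two-term resolution of $\bL_{B/A}$ by $f^*\bL_A\to\bL_B$, so that $\bT_{B/A}[-n]$ is modeled by the cone of $\bT_B\to\Hom_A(\bL_A,B)$ shifted appropriately; then the map $\Pol(A,n)\to\Pol(B/A,n-1)$ visibly sends $v$ to the class of $f\circ v\in\Hom_A(\bL_A,B)$, which is the boundary map by construction. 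This is the computation carried out (in slightly different language) in \cite[Section 2]{MS1}. Once you make that model explicit, the rest of your argument goes through.
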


\begin{prop}
Let $f\colon L\rightarrow X$ be as before. Then one has an equivalence of spaces
\[\Cois(f, n)\cong \Map_{\ialg_{\Lie}^{gr}}(k(2)[-1], \Pol(f, n)[n+1]).\]
\label{prop:coisotropicpolyvectors}
\end{prop}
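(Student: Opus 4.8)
The plan is to reduce the statement for derived Artin stacks to the affine case, where the analogous computation of $\Cois(f,n)$ in terms of a Maurer--Cartan space of relative polyvectors is already available. Recall from \cref{sect:relpoisson} that in the affine case we have an equivalence
\[\Cois(f, n)\cong \Map_{\ialg_{\Lie}^{gr}}(k(2)[-1], \Pol(f, n)[n+1]),\]
and that $\Pol(f,n)=\U(\Pol(A,n),\Pol(B/A,n-1))$ fits into a fiber sequence of graded dg Lie algebras
\[\Pol(B/A, n-1)[n-1]\longrightarrow \Pol(f, n)[n]\longrightarrow \Pol(A, n)[n].\]
The overall strategy is to globalize this description via the formal localization machinery of \cref{sect:globalization}: both sides of the claimed equivalence will be assembled from their affine values on a smooth atlas, and one checks that the comparison map is compatible with descent.

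First I would set up the relative polyvectors globally. Following the recipe of \cref{defn:nshiftedPoisson}, the space $\Cois(f, n)$ for $f\colon L\rightarrow X$ is defined using the relative sheaf of principal parts along $f$, i.e. by applying the affine construction fiberwise over $X_{\dR}$ (and $L_{\dR}$). Concretely, one replaces $A$ by $\bD_{X_{\dR}}(\infty)\rightarrow \cB_X(\infty)$ and $B$ by the corresponding principal-parts algebra for $L$, and forms $\Pol(f,n)$ as a prestack of graded dg Lie algebras on $X_{\dR}$. The key input is the previous proposition, which identifies the image of the pair $(\Pol(X,n),\Pol(L/X,n-1))$ under the forgetful functor to $\Arr(\icalg^{gr})$ with the composite
\[\Gamma(X, \Sym(\bT_X[-n-1]))\longrightarrow \Gamma(L, \Sym(f^*\bT_X[-n-1]))\longrightarrow \Gamma(L, \Sym(\bT_{L/X}[-n])).\]
This tells us what $\Pol(f,n)$ is as a graded complex, so the Maurer--Cartan space on the right-hand side makes sense globally.

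Next I would prove the equivalence by descent. Choose a smooth atlas $\coprod U_i\rightarrow X$ and a compatible atlas for $L$; on each affine chart the desired equivalence holds by \cref{prop:coisotropicpolyvectors}'s affine predecessor. Both functors $Y\mapsto \Cois(\cdot, n)$ and $Y\mapsto \Map_{\ialg_{\Lie}^{gr}}(k(2)[-1], \Pol(\cdot, n)[n+1])$ send the relevant diagrams to limits: the coisotropic space is defined as a fiber of a map of $\infty$-groupoids built from $\bP_{[n+2,n+1]}$-algebras, and the Maurer--Cartan functor $\uMC(-)$ commutes with limits of (pro)nilpotent dg Lie algebras. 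Since the comparison map is defined affine-locally and is natural, it suffices to observe that both sides satisfy the same descent, and the map is an equivalence on each chart; hence it is an equivalence globally. The graded dg Lie algebra $\Pol(f,n)[n+1]$ is pronilpotent after completing in weights $\geq 2$, exactly as in the absolute case, so $\uMC$ is well-behaved.

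The main obstacle is the bookkeeping in the formal-localization step: one must verify that the relative polyvectors $\Pol(f,n)$ glue correctly as a prestack on $X_{\dR}$ and that the equivalence is compatible with the transition maps of the smooth groupoid presenting $X$ (and $L$). In particular, the subtlety is that $\Pol(f,n)$ involves the non-unital envelope $\U$ of a relative pair, and one needs the fiber sequence $\Pol(B/A,n-1)[n-1]\to\Pol(f,n)[n]\to\Pol(A,n)[n]$ to be preserved under restriction to charts and under the Tate twist by $k(\infty)$. Once one knows that the formal-localization construction of $\Cois(f,n)$ agrees chart-by-chart with the Maurer--Cartan description, the remaining verification is that passing to the limit over the atlas is harmless, which follows from the fact that $\uMC$ of a limit of pronilpotent dg Lie algebras is the limit of the Maurer--Cartan spaces.
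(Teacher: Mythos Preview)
The paper does not supply a proof of this proposition; it is stated as a direct consequence of the affine version together with the remark that ``as in \cref{sect:globalization}, we can generalize relative polyvectors to morphisms of derived Artin stacks.'' The intended argument, carried out in the cited references, is that the affine equivalence holds more generally for morphisms of commutative algebras in any sufficiently nice symmetric monoidal dg category (cf.\ \cref{remark:generalmodelcategory}), and one then applies it to the morphism of principal-parts algebras built from $\cB_X(\infty)$ and $\cB_L(\infty)$ over the appropriate de Rham base. Both $\Cois(f,n)$ and $\Pol(f,n)$ are \emph{defined} as instances of the algebraic constructions in that category, so the equivalence is literally a special case of the general algebraic statement; no separate gluing is required.

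Your proposal correctly identifies formal localization as the mechanism, but then layers on a second step---checking the equivalence by smooth descent along an atlas of $X$---which is both unnecessary and problematic. It is unnecessary because once both sides are constructed via formal localization, the comparison is already a statement about a single morphism of algebra objects in a fixed symmetric monoidal category, and the affine proposition applies verbatim. It is problematic because your assertion that $Y\mapsto\Cois(\cdot,n)$ sends smooth hypercovers to limits is precisely what is \emph{not} obvious a priori: the whole point of the formal-localization machinery in \cref{sect:globalization} is to sidestep the failure (or at least the non-obviousness) of naive smooth descent for Poisson and coisotropic structures. Invoking such descent as an input is circular. The correct shape of the argument is: prove the algebraic equivalence in enough generality that it covers the relevant algebras in graded mixed modules over $\bD_{X_{\dR}}(\infty)$; then there is nothing left to glue.
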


\begin{example}
Let $f\colon L\rightarrow X$ be an embedding of a closed subscheme. As in \cref{ex:poissonscheme}, from  we see that the weight $p$ part of $\Pol(f, n)$ is concentrated in cohomological degrees $[p(n+1), \infty)$. Therefore, by \cref{prop:coisotropicpolyvectors} the spaces $\Cois(f, n)$ are contractible for $n>0$. Moreover, for $n=0$ we recover the \emph{set} of Poisson structures on $X$ for which $L$ is coisotropic in the usual sense, i.e. for which the bivector $\pi_X\in\Gamma(X, \wedge^2 \T_X)$ vanishes under the restriction
\[\Gamma(X, \wedge^2 \T_X)\longrightarrow \Gamma(L, \wedge^2 \N_{L/X}).\]
\label{ex:coisotropicscheme}
\end{example}

\begin{example}
If $X$ is a smooth Poisson scheme, the identity inclusion $X\rightarrow X$ is coisotropic. In the derived setting one can show that there is a \emph{unique} coisotropic structure.

Let $X$ be an $n$-shifted Poisson stack and consider the identity map $\id\colon X\rightarrow X$. By the property (2) of the forgetful functor $\U\colon \ialg_{\bP_{[n+2, n+1]}}\rightarrow \ialg_{\bP_{n+2}^{nu}}$ given in \cref{sect:relpoisson} we have a fiber sequence of Lie algebras
\[\Pol(X/X, n-1)[n]\longrightarrow \Pol(\id, n)[n+1]\longrightarrow \Pol(X, n)[n+1].\]

By \cref{prop:polyvectorsgrcdga} we can identify $\Pol(X/X, n-1)\cong \Gamma(X, \Sym(\bT_{X/X}[-n]))$ as graded commutative dg algebras and since $\bT_{X/X}=0$, the graded complex $\Pol(X/X, n-1)$ is concentrated in weight $0$. In particular, the morphism $\Pol(\id, n)^{\geq 2}\rightarrow \Pol(X, n)^{\geq 2}$ is an equivalence and hence the forgetful map
\[\Cois(\id, n)\longrightarrow \Pois(X, n)\]
is an equivalence of spaces. In other words, there is a unique $n$-shifted coisotropic structure on the identity map compatible with the given $n$-shifted Poisson structure on $X$.
\end{example}

Using the previous example we get a diagram $\Pois(X, n)\xleftarrow{\sim} \Cois(\id, n)\rightarrow \Pois(X, n-1)$ and hence a forgetful map $\Pois(X, n)\rightarrow \Pois(X, n-1)$.

\begin{prop}
The forgetful map $\Pois(X, n)\rightarrow \Pois(X, n-1)$ defined above is equivalent to the one given by \eqref{eq:forgetshiftstack}.
\end{prop}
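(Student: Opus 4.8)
The plan is to unwind both maps using Poisson additivity (\cref{thm:poissonadditivity}) together with the module description of relative Poisson algebras, and to check that both are computed by the same underlying-object functor $\ialg(\ialg_{\bP_n})\to\ialg_{\bP_n}$.

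First I would set up the dictionary. Under $\ialg_{\bP_{n+1}}\cong\ialg(\ialg_{\bP_n})$ the functor $\forget_{n+1}^n$ of \cref{sect:additivity} is, by definition, the composite with the functor $\ialg(\ialg_{\bP_n})\to\ialg_{\bP_n}$ sending an associative algebra in $\bP_n$-algebras to its underlying $\bP_n$-algebra; this is the functor inducing \eqref{eq:forgetshiftstack}. On the other hand, under $\ialg_{\bP_{[n+1,n]}}\cong\ilmod(\ialg_{\bP_n})$ the two forgetful maps out of $\Cois(f,n)$ correspond to the two projections out of $\ilmod(\ialg_{\bP_n})$: the map $\Cois(f,n)\to\Pois(A,n)$ is the projection to the underlying associative algebra, while $\Cois(f,n)\to\Pois(B,n-1)$ sends a pair $(\tilde{A},M)$ to the underlying $\bP_n$-algebra of the module $M$.

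Next I would identify the equivalence $\Cois(\id,n)\cong\Pois(X,n)$ of the preceding example with the regular-module section. Consider the functor $\ialg(\ialg_{\bP_n})\to\ilmod(\ialg_{\bP_n})$ sending an associative algebra $\tilde{A}$ to the regular left module $(\tilde{A},\tilde{A})$. Under $\ilmod(\icalg)\cong\Arr(\icalg)$ (via \cite[Proposition 2.4.3.9]{HA}) this section lies over the identity arrow $\id\colon A\to A$, hence it is a functorial section $s$ of the projection $p\colon\Cois(\id,n)\to\Pois(X,n)$. Since the example shows $p$ is already an equivalence, the section $s$ is forced to be $p^{-1}$. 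The forgetful map of the statement is by construction $q\circ p^{-1}$, where $q\colon\Cois(\id,n)\to\Pois(X,n-1)$ is the other leg; hence it equals $q\circ s$, which computes $\tilde{A}\mapsto(\tilde{A},\tilde{A})\mapsto\tilde{A}$, i.e. exactly the underlying-object functor $\ialg(\ialg_{\bP_n})\to\ialg_{\bP_n}$. This is $\forget_{n+1}^n$, proving the claim at the affine level.

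Finally I would globalize. All of the above takes place in the symmetric monoidal $\infty$-category of $R$-linear $\bP_m$-algebras for $R=\bD_{X_{\dR}}(\infty)$, where both the additivity equivalence and the module description of relative Poisson algebras remain valid; applying the formal localization formalism of \cref{sect:globalization} to $\cB_X(\infty)$ over $\bD_{X_{\dR}}(\infty)$ lets the identification descend to $\Pois(X,n)\to\Pois(X,n-1)$. The main obstacle is the second step: one must verify that the regular module $(\tilde{A},\tilde{A})$ genuinely defines a coisotropic structure on $\id_X$ lying over the given $\bP_{n+1}$-structure, equivalently that the abstract section $s$ agrees with the one implicitly built from the polyvector fiber sequence $\Pol(X/X,n-1)[n]\to\Pol(\id,n)[n+1]\to\Pol(X,n)[n+1]$ used in the example. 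Once this compatibility is in place, the remainder is formal.
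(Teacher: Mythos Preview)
The paper states this proposition without proof, so there is nothing to compare against directly. Your approach via the regular-module section is the natural one and is essentially correct: under $\ialg_{\bP_{[n+1,n]}}\cong\ilmod(\ialg_{\bP_n})$ the two legs out of $\Cois(f,n)$ are the two projections $(A,M)\mapsto A$ and $(A,M)\mapsto M$, the regular-module functor $A\mapsto(A,A)$ lies over $\id_A\in\Arr(\icalg)$, and the composite $A\mapsto(A,A)\mapsto A$ recovers $\forget_{n+1}^n$.

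The ``obstacle'' you flag at the end is not a genuine one, and you should drop it. The polyvector fiber sequence in the preceding example is used only to establish that $p\colon\Cois(\id,n)\to\Pois(X,n)$ is an equivalence; it does not construct a preferred inverse that your section must be compared to. Once $p$ is known to be an equivalence, \emph{any} section is homotopic to $p^{-1}$, so $q\circ s\simeq q\circ p^{-1}$ automatically. What does need to be checked carefully is the commutativity of
\[
\xymatrix{
\ialg(\ialg_{\bP_n}) \ar[r]^-{A\mapsto(A,A)} \ar[d] & \ilmod(\ialg_{\bP_n}) \ar[d] \\
\icalg \ar[r]^-{A\mapsto\id_A} & \Arr(\icalg),
}
\]
i.e.\ that the regular module is natural in the forgetful functor to commutative algebras and that under $\ilmod(\icalg)\cong\Arr(\icalg)$ the regular module corresponds to the identity arrow. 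Both are standard (the latter is immediate from \cite[Proposition 2.4.3.9]{HA}), so once you make this explicit the argument is complete. The globalization step is then formal, as you say.
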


\subsection{Shifted symplectic structures}

In this section we briefly remind some facts about shifted symplectic structures. The reader is referred to \cite{PTVV} and \cite{Cal2} for a more complete treatment.

\begin{defn}
Let $A$ be a commutative dg algebra. Its \defterm{de Rham complex} is the graded mixed commutative dg algebra
\[\DR(A) = \Sym_A(\bL_A[-1])\]
with weights given by the $\Sym$ grading and the mixed structure given by the de Rham differential.
\end{defn}

\begin{defn}
Let $X$ be a derived stack. Its \defterm{de Rham complex} is the graded mixed commutative dg algebra
\[\DR(X) = \lim_{\Spec A\rightarrow X} \DR(A).\]
\end{defn}

\begin{defn}
Let $X$ be a derived stack.
\begin{itemize}
\item The \defterm{space of two-forms of degree $n$ on $X$} is
\[\cA^2(X, n) = \Map_{\ich^{gr}}(k(2)[-1], \DR(X)[n+1]).\]

\item The \defterm{space of closed two-forms of degree $n$ on $X$} is
\[\cA^{2, cl}(X, n) = \Map_{\ich^{gr,\epsilon}}(k(2)[-1], \DR(X)[n+1]).\]
\end{itemize}
\end{defn}

From the definition we get a morphism
\[\cA^{2, cl}(X, n)\longrightarrow \cA^2(X, n)\]
which extracts the underlying two-form.

Note that by pulling back differential forms to affines we obtain a natural morphism of \emph{graded commutative dg algebras}
\begin{equation}
\Gamma(X, \Sym(\bL_X[-1]))\longrightarrow \DR(X).
\label{eq:DRaffinepullback}
\end{equation}
Unfortunately, for a general derived stack $X$ we do not know if this morphism is an equivalence. However, we have the following statement, see \cite[Proposition 1.14]{PTVV}.

\begin{prop}
Suppose $X$ is a derived Artin stack. Then the morphism of graded commutative dg algebras \eqref{eq:DRaffinepullback} is an equivalence.
\end{prop}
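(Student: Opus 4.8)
The plan is to prove the statement by induction on the geometric level of $X$, using smooth descent to reduce to the affine case where the map is tautologically an equivalence. Since \eqref{eq:DRaffinepullback} is a morphism of graded objects, it suffices to check that it is an equivalence in each weight $p$. Unwinding the definitions, both sides are computed as limits over the $\infty$-category of affines $\Spec A\to X$: by definition the weight-$p$ part of $\DR(X)$ is $\lim_{\Spec A\to X}\Sym^p_A(\bL_A[-1])$, while by quasi-coherent descent the global sections $\Gamma(X,\Sym^p(\bL_X[-1]))$ are computed as $\lim_{\Spec A\to X}\Sym^p_A(f^*\bL_X[-1])$, where $f\colon \Spec A\to X$. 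Under these identifications the map \eqref{eq:DRaffinepullback} is induced levelwise by the canonical morphism $f^*\bL_X\to\bL_A$. When $X=\Spec A$ is affine the indexing category has a terminal object given by the identity, so both limits reduce to $\Sym_A(\bL_A[-1])$ and the map is an equivalence; together with the fact that both sides turn coproducts of affines into products, this is the base case.

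For the inductive step I would choose a smooth atlas $u\colon U\to X$ with $U$ a coproduct of affines and form its \v{C}ech nerve $U_\bullet$ with $U_m = U\times_X\dots\times_X U$. Since the diagonal of an $n$-geometric stack is $(n-1)$-geometric, each $U_m$ is $(n-1)$-geometric, so the inductive hypothesis applies to it. Because $\DR$ is defined as a right Kan extension from affines it carries the colimit $X\simeq\colim_{\Delta^{op}} U_\bullet$ to a limit, giving $\DR(X)\simeq \Tot(\DR(U_\bullet))$, and the inductive hypothesis identifies each $\DR(U_m)$ with $\Gamma(U_m,\Sym(\bL_{U_m}[-1]))$. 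On the other hand, quasi-coherent descent along the smooth cover gives $\Gamma(X,\Sym(\bL_X[-1]))\simeq \Tot\,\Gamma(U_\bullet,\Sym(g_\bullet^*\bL_X[-1]))$, where $g_m\colon U_m\to X$. Thus the problem reduces to showing that the map of cosimplicial graded commutative dg algebras induced levelwise by $g_m^*\bL_X\to\bL_{U_m}$ becomes an equivalence after applying $\Tot$.

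The main obstacle is precisely this last step: levelwise the map is far from an equivalence, since the cofiber of $g_m^*\bL_X\to\bL_{U_m}$ is the relative cotangent complex $\bL_{U_m/X}$, which for the smooth morphism $g_m$ is a vector bundle concentrated in degree $0$ of positive rank. Equivalently, one must show that $X\mapsto\Gamma(X,\Sym(\bL_X[-1]))$ itself satisfies smooth descent, i.e. that these relative contributions are invisible to the totalization. I would establish this by filtering $\Sym^p(\bL_{U_m}[-1])$ using the fiber sequence $g_m^*\bL_X\to\bL_{U_m}\to\bL_{U_m/X}$, whose associated graded pieces are $\Sym^i(g_m^*\bL_X[-1])\otimes\Sym^{p-i}(\bL_{U_m/X}[-1])$, and checking that the totalization of every piece with $i<p$ vanishes. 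This acyclicity reflects the fact that the \v{C}ech nerve of a smooth surjection is a hypercover whose fibres carry an extra degeneracy, so that the vertical de Rham contributions satisfy a relative Poincaré lemma; in the example $X=\B G$, where $U_m=G^{\times m}$ and $\bL_{U_m/X}$ is built from copies of $\mathfrak{g}^*$, it is exactly the statement that the bar complex detects no higher invariants of $\Sym^{\geq 1}\mathfrak{g}^*$. Once this cancellation is in place, the levelwise maps assemble to an equivalence on totalizations, completing the induction.
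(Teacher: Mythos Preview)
The paper does not supply its own proof of this proposition; it simply records the statement and refers the reader to \cite[Proposition 1.14]{PTVV}. There is therefore nothing in the present text against which to compare your argument directly.

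That said, your strategy is the standard one and is essentially that of \cite{PTVV}: induction on the geometric level, smooth descent for $\DR$ via the \v{C}ech nerve of an atlas, and a filtration of $\Sym^p(\bL_{U_m}[-1])$ with associated graded pieces $\Sym^i(g_m^*\bL_X[-1])\otimes\Sym^{p-i}(\bL_{U_m/X}[-1])$. Your identification of the crux---that the pieces with $i<p$ must have vanishing totalization---is correct. One refinement is needed, however: the ``extra degeneracy'' you invoke is not available for the augmented simplicial object $U_\bullet\to X$ itself (that would force $X$ to be a retract of $U$). The standard fix is to check the vanishing of $\Tot\bigl((g_\bullet)_*\Sym^j(\bL_{U_\bullet/X})\bigr)$ in $\QCoh(X)$ after pulling back along the faithfully flat map $u\colon U\to X$; the base-changed nerve $U_\bullet\times_X U\to U$ \emph{does} acquire an extra degeneracy, and the augmentation term $\Sym^j(\bL_{U/U})$ vanishes for $j\geq 1$, so the contracting homotopy gives the required acyclicity. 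With this adjustment your sketch is sound.
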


In particular, under the previous assumptions we see that a two-form $\omega\in\cA^2(X, n)$ is the same as a morphism of complexes
\[k\rightarrow \Gamma(X, \Sym^2(\bL_X[-1]))[2+n].\]
If we assume $\bL_X$ is perfect, by adjunction this morphism gives rise to a morphism
\[\omega^\sharp\colon \bT_X\rightarrow \bL_X[n].\]

\begin{defn}
Let $X$ be a derived Artin stack locally of finite presentation. A two-form $\omega\in\cA^2(X, n)$ is \defterm{nondegenerate} if the morphism $\omega^\sharp\colon \bT_X\rightarrow \bL_X[n]$ is an equivalence.
\end{defn}

\begin{defn}
Let $X$ be a derived Artin stack locally of finite presentation. The \defterm{space of $n$-shfited symplectic structures} on $X$ is the subspace $\Symp(X, n)\subset \cA^{2, cl}(X, n)$ of closed two-forms whose underlying two-form is nondegenerate.
\end{defn}

\begin{example}
Suppose $Y$ is a derived Artin stack locally of finite presentation. Then for any $n$ the $n$-shifted cotangent stack
\[\T^*[n]Y\cong \Spec_Y\Sym(\bT_Y[-n])\]
has an exact $n$-shifted symplectic structure by \cite{Cal3}.
\end{example}

If $f\colon L\rightarrow X$ is a morphism of derived stacks, we get an induced morphism of graded mixed commutative dg algebras
\[f^*\colon \DR(X)\longrightarrow \DR(L)\]
given by pulling back differential forms. An $n$-shifted isotropic structure on $f$ is then given by a closed two-form $\omega$ on $X$ of degree $n$ and the nullhomotopy of $f^*\omega$.

\begin{defn}
Let $f\colon L\rightarrow X$ be a morphism of derived stacks. The \defterm{space of $n$-shifted isotropic structures on $f$} is
\[\Isot(X, n) = \Map_{\ich^{gr,\epsilon}}(k(2)[-1], \fib(f^*\colon \DR(X)\rightarrow \DR(L))[n+1]).\]
\end{defn}

Suppose $f\colon L\rightarrow X$ is a morphism of derived Artin stacks locally of finite presentation. It is easy to see that given an $n$-shifted isotropic structure on $f$ we obtain a null-homotopy of the composite
\[\bT_L\longrightarrow f^*\bT_X\xrightarrow{\omega^\sharp} f^*\bL_X[n]\longrightarrow \bL_L[n].\]

\begin{defn}
Let $f\colon L\rightarrow X$ be a morphism of derived Artin stacks locally of finite presentation. The \defterm{space of $n$-shifted Lagrangian structures on $f$} is the subspace $\Lagr(f, n)\subset \Isot(X, n)$ of those $n$-shifted isotropic structures whose underlying closed two-form on $X$ is nondegenerate and such that the sequence in $\QCoh(L)$
\[\bT_L\longrightarrow f^*\bT_X\longrightarrow \bL_L[n]\]
is a fiber sequence.
\end{defn}

Equivalently, an $n$-shifted isotropic structure $\lambda$ induces a morphism $\lambda^\sharp\colon \bT_{L/X}\rightarrow\bL_L[n-1]$ and it is Lagrangian if $\lambda^\sharp$ is an equivalence.

\begin{example}
Suppose $Z\rightarrow Y$ is a morphism of derived Artin stacks locally of finite presentation. Then we have an $n$-shifted conormal stack
\[\N^*[n](Z/Y) = \Spec_Z\Sym (\bT_{Z/Y}[1-n]).\]

We have a natural morphism
\[\Spec_Z \Sym(\bT_{Z/Y}[1-n])\rightarrow \Spec_Z\Sym (f^*\bT_Y[-n])\rightarrow \Spec_Y \Sym (\bT_Y[-n]) = \T^*[n]Y\]
and it is shown in \cite{Cal3} that $\N^*[n](Z/Y)\rightarrow \T^*[n](Y)$ carries an exact $n$-shifted Lagrangian structure.
\label{ex:conormal}
\end{example}

Note that by construction we have a forgetful map
\[\Lagr(f, n)\longrightarrow \Symp(X, n).\]

Fix two maps $f_i\colon L_i\rightarrow X$ for $i=1, 2$. Suppose $X$ carries a closed two-form $\omega$ of degree $n$ and we have two null-homotopies $f_i^*\omega\sim 0$. Then the intersection $L_1\times_X L_2$ carries two null-homotopies of the pullback $g^*\omega$ where $g\colon L_1\times_X L_2\rightarrow X$. The difference between the two null-homotopies is a closed two-form of degree $(n-1)$ on $L_1\times_X L_2$. This procedure in fact preserves nondegeneracy as shown in \cite[Section 2.2]{PTVV}, i.e. given two $n$-shifted Lagrangians $L_1, L_2\rightarrow X$, their intersection $L_1\times_X L_2$ has an $(n-1)$-shifted symplectic structure.

\begin{prop}
Suppose $L_1, L_2\rightarrow X$ are two morphisms of derived Artin stacks locally of finite presentation. Then we have a map of spaces
\[\Isot(L_1\rightarrow X, n)\times_{\cA^{2, cl}(X, n)} \Isot(L_2\rightarrow X, n)\longrightarrow \cA^{2, cl}(L_1\times_X L_2, n-1).\]

This map preserves nondegeneracy, i.e. it induces a map of spaces
\[\Lagr(L_1\rightarrow X, n)\times_{\Symp(X, n)} \Lagr(L_2\rightarrow X, n)\longrightarrow \Symp(L_1\times_X L_2, n-1).\]
\label{prop:lagrangianintersection}
\end{prop}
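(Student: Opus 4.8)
The plan is to establish the two assertions separately: first construct the comparison map on closed two-forms, and then verify that the Lagrangian hypotheses force the resulting two-form to be nondegenerate. Throughout write $F = L_1\times_X L_2$, with projections $p_i\colon F\to L_i$ and $g = f_i\circ p_i\colon F\to X$.

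For the construction I would use that, for any derived stack $Y$, the space $\cA^{2, cl}(Y, n) = \Map_{\ich^{gr, \epsilon}}(k(2)[-1], \DR(Y)[n+1])$ is a mapping space into the stable $\infty$-category $\ich^{gr, \epsilon}$ of graded mixed complexes, and that $\DR$ sends $p_i$ and $g$ to pullback maps. An object of the fibre product consists of a single closed two-form $\omega\in\cA^{2, cl}(X, n)$ together with, for $i = 1, 2$, an isotropic structure on $f_i$, i.e. a nullhomotopy $h_i$ of $f_i^*\omega$ in $\cA^{2, cl}(L_i, n)$. Pulling back along $p_i$ turns each $h_i$ into a nullhomotopy of $g^*\omega\in\cA^{2, cl}(F, n)$, so the datum of the fibre product produces two paths from $g^*\omega$ to $0$; concatenating one with the reverse of the other gives a based loop at $0$. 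Since we are looping a mapping space into a stable category,
\[\Omega_0\, \cA^{2, cl}(F, n)\simeq \Map_{\ich^{gr, \epsilon}}(k(2)[-1], \DR(F)[n])\simeq \cA^{2, cl}(F, n-1),\]
and this loop is the desired $(n-1)$-shifted closed two-form $\omega_F$. The assignment is natural in the input, so it defines the map of spaces.

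For nondegeneracy it suffices to work with the underlying two-form: applying the forgetful map $\cA^{2, cl}(F, n-1)\to\cA^2(F, n-1)$ and the identification of two-forms on an Artin stack, the output corresponds to a morphism $\omega_F^\sharp\colon \bT_F\to\bL_F[n-1]$, and nondegeneracy is exactly the assertion that $\omega_F^\sharp$ is an equivalence. The inputs at my disposal are the equivalence $\omega^\sharp\colon \bT_X\xrightarrow{\sim}\bL_X[n]$ from nondegeneracy on $X$, and, for each $i$, the Lagrangian equivalence $\lambda_i^\sharp\colon \bT_{L_i/X}\xrightarrow{\sim}\bL_{L_i}[n-1]$. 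Writing $Q = p_1^*\bL_{L_1}\oplus p_2^*\bL_{L_2}$, I would put the relative tangent fibre sequence of $g\colon F\to X$ on top, and on the bottom the cotangent cofibre sequence $g^*\bL_X\to Q\to\bL_F$ of the fibre-product square, rotated and shifted by $[n-1]$, and assemble the ladder
\[
\xymatrix{
\bT_{F/X} \ar[r] \ar[d]_{\bigoplus_i p_i^*\lambda_i^\sharp} & \bT_F \ar[r] \ar[d]^{\omega_F^\sharp} & g^*\bT_X \ar[d]^{g^*\omega^\sharp} \\
Q[n-1] \ar[r] & \bL_F[n-1] \ar[r] & g^*\bL_X[n]
}
\]
Here I use the base-change identification $\bT_{F/X}\cong p_1^*\bT_{L_1/X}\oplus p_2^*\bT_{L_2/X}$, under which the left vertical is $\bigoplus_i p_i^*\lambda_i^\sharp$, an equivalence; the right vertical $g^*\omega^\sharp$ is an equivalence as well. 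Both rows are fibre sequences, so once the two squares commute coherently, the two-out-of-three property for fibre sequences in a stable $\infty$-category forces the middle map, which one identifies with $\omega_F^\sharp$, to be an equivalence.

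The main obstacle is the coherent construction of this ladder: I must produce a genuine morphism of fibre sequences in which the two squares commute up to specified homotopies and, crucially, the middle vertical is canonically identified with the $\omega_F^\sharp$ coming from the difference of the two isotropic structures. The commutativity of the right square is essentially the pullback of nondegeneracy on $X$, while the left square and the identification of the middle map encode how the isotropy nullhomotopies $p_i^*h_i$ trivialize the self-pairings on the $L_i$ and glue into the reduced pairing on $F$; this homotopy-coherent bookkeeping cannot be checked pointwise. I would handle it by descending to the affine case through the formal-localization description of \cref{sect:globalization}, where $\DR(F)$, $\bT_F$ and $\bL_F$ are computed by explicit graded mixed perfect modules and the comparison reduces to a finite diagram chase, and then globalizing by naturality, following \cite[Section 2.2]{PTVV}.
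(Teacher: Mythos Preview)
Your proposal is correct and follows essentially the same approach as the paper. The paper does not give a self-contained proof of this proposition: it sketches the construction of the closed two-form on $L_1\times_X L_2$ exactly as you do (two nullhomotopies of $g^*\omega$ whose difference is a loop, hence an $(n-1)$-shifted closed form via stability), and for the preservation of nondegeneracy it simply cites \cite[Section 2.2]{PTVV}, which is precisely the ladder-of-fibre-sequences argument you have outlined and to which you also defer at the end.
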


This is known as the Lagrangian intersection theorem 

\begin{example}
Suppose $X$ is a smooth symplectic scheme and $L\subset X$ is a smooth Lagrangian subscheme. Using the Koszul complex we can identify
\[L\times_X L\cong \N[-1](L/X).\]

Since $L$ is Lagrangian, $\N(L/X)\cong \T^*L$ and hence
\[L\times_X L\cong \T^*[-1] L.\]

The derived scheme $L\times_X L$ is a Lagrangian intersection and so by \cref{prop:lagrangianintersection} it inherits a $(-1)$-shifted symplectic structure. One can show that the equivalence $L\times_X L\cong \T^*[-1] L$ is compatible with the $(-1)$-shifted symplectic structures on both sides.
\end{example}

\subsection{Shifted Poisson and shifted symplectic structures}

If $X$ is a smooth scheme, a symplectic structure on $X$ is the same as a nondegenerate Poisson structure, i.e. a Poisson structure which induces an isomorphism $\T^*_X\rightarrow \T_X$. This is clear on the level of linear algebra and the only nontrivial computation is that closedness of the two-form $\omega$ is equivalent to the Jacobi identity for the bivector $\pi=\omega^{-1}$. In the derived context the same result is far from obvious since we need to show that closedness of the two-form \emph{up to coherent homotopy} is equivalent to the Jacobi identity for the bivector which again holds \emph{up to coherent homotopies}. Nevertheless, we still have an equivalence between nondegenerate $n$-shifted Poisson structures and $n$-shifted symplectic structures.

\begin{defn}
Let $X$ be a derived Artin stack locally of finite presentation.

\begin{itemize}
\item Suppose $\pi$ is an $n$-shifted Poisson structures on $X$. It is \defterm{nondegenerate} if the morphism
\[\pi^\sharp\colon \bL_X\longrightarrow \bT_X[-n]\]
induced by the underlying bivector is an equivalence.

\item Denote by $\Pois^{nd}(X, n)\subset \Pois(X, n)$ the subspace of nondegenerate $n$-shifted Poisson structures.
\end{itemize}
\end{defn}

The following is proved in \cite[Theorem 3.2.4]{CPTVV} and \cite[Theorem 3.13]{Pri1}.

\begin{thm}
One has an equivalence of spaces
\[\Pois^{nd}(X, n)\cong \Symp(X, n).\]
\label{thm:nondegeneratepoisson}
\end{thm}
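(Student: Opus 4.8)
The plan is to observe that each side is a subspace of purely linear data (an underlying $2$-form, resp. bivector) cut out by the \emph{same} nondegeneracy condition, and then to match the remaining structure --- closedness against the Jacobi equation --- through an intermediate space of compatible pairs, in the spirit of \cite{Pri1}.

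First I would fix the two presentations to be compared. By \cref{prop:poissonpolyvectors} together with the Maurer--Cartan description of maps out of $k(2)[-1]$, the space $\Pois(X,n)$ is the Maurer--Cartan space of the completion in weights $\geq 2$ of the graded dg Lie algebra $\Pol(X,n)[n+1]$, whose underlying graded complex is $\Gamma(X,\Sym(\bT_X[-n-1]))[n+1]$ by \cref{prop:polyvectorsgrcdga}. On the symplectic side, $\Symp(X,n)\subset \cA^{2,cl}(X,n)=\Map_{\ich^{gr,\epsilon}}(k(2)[-1],\DR(X)[n+1])$, and since $X$ is a derived Artin stack $\DR(X)\cong \Gamma(X,\Sym(\bL_X[-1]))$ as graded commutative dg algebras. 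Taking weight $2$, the underlying bivector and the underlying $2$-form are adjoint to maps $\pi^\sharp\colon\bL_X\to\bT_X[-n]$ and $\omega^\sharp\colon\bT_X\to\bL_X[n]$ respectively, and the two nondegeneracy conditions coincide since these are mutually inverse equivalences. This elementary input identifies the underlying nondegenerate data on the two sides.

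The real content is to upgrade this to an identification of the full coherent structures. I would introduce a space $\mathrm{Comp}(X,n)$ of compatible pairs $(\pi,\omega)$ with $\pi\in\Pois(X,n)$ and $\omega\in\cA^{2,cl}(X,n)$, together with a homotopy-coherence datum whose underlying shadow exhibits $\pi^\sharp$ as inverse to $\omega^\sharp$. This space maps to $\Pois^{nd}(X,n)$ and to $\Symp(X,n)$ by forgetting one structure, and the theorem reduces to showing both forgetful maps are equivalences. The conceptual mechanism is the homotopical form of the classical identity that, through the musical isomorphism of a nondegenerate $\pi$, the Schouten differential $[\pi,-]$ on polyvectors is intertwined with the de Rham (mixed) differential $\epsilon$ on forms; nondegeneracy thereby turns the graded-Lie Maurer--Cartan data defining $\Pois$ into the graded-mixed closedness data defining $\Symp$. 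To execute this coherently I would work in the $\bD_{X_{\dR}}(\infty)$-linear setting of \cref{defn:nshiftedPoisson} (see \cref{remark:generalmodelcategory}), where both $\Pol$ and $\DR$ are defined and where $\bT_{X_{\dR}}=0$ guarantees that fiberwise polyvectors along $p\colon X\to X_{\dR}$ agree with all polyvectors. There one checks, by an explicit analysis in the formal affine model, that for a fixed nondegenerate $\pi$ the space of compatible closed forms $\omega$ is contractible, and symmetrically for a fixed nondegenerate $\omega$; this makes both legs of $\mathrm{Comp}(X,n)$ trivial fibrations and yields $\Pois^{nd}(X,n)\cong\Symp(X,n)$.

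The main obstacle is exactly this last coherence: promoting the strict statement ``closed $\Leftrightarrow$ Jacobi for a nondegenerate bivector'' to an equivalence of $\infty$-groupoids, compatibly with all higher homotopies and with the Tate realization implicit in the $k(\infty)$-twist. Controlling the contractibility of the relevant spaces of compatibility data over the nondegenerate locus is where the homotopical and obstruction-theoretic work lies; the two mapping-space presentations above reduce everything else to formal manipulation.
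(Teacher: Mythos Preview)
Your approach is essentially the paper's (and Pridham's): introduce a space of compatible pairs with projections to both $\Pois^{nd}$ and $\Symp$, and show both projections are equivalences. Two points, however, separate your sketch from an actual proof and from the paper's argument.

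First, your notion of compatibility is too weak. A ``homotopy-coherence datum whose underlying shadow exhibits $\pi^\sharp$ as inverse to $\omega^\sharp$'' only constrains the weight-$2$ components. The paper's definition is sharper: for a Poisson structure $\pi$ one twists $\Pol(A,n)$ by $[\pi,-]$ to obtain a graded \emph{mixed} cdga $\Pol_\pi(A,n)$, and the universal property of $\DR$ produces a canonical map of graded mixed cdgas $\DR(A)\to\Pol_\pi(A,n)$. Compatibility is then a homotopy between $\omega$ and the canonical element $\sigma=\sum_{m\geq 2}(m-1)\pi_m$ through this map. The relation $\pi^\sharp\circ\omega^\sharp\circ\pi^\sharp\simeq\pi^\sharp$ is a \emph{consequence} (\cref{lm:piomegapi}), not the definition; without the full triangle you cannot see the higher weight constraints that make the argument go.

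Second, the two legs are not symmetric. With the correct definition, the projection $\Comp^{nd}\to\Pois^{nd}$ is essentially formal: if $\pi$ is nondegenerate then $\DR(A)\to\Pol_\pi(A,n)$ is an equivalence of graded mixed cdgas, so $\omega$ is uniquely determined. The projection $\Comp^{nd}\to\Symp$, by contrast, is where all the work lies and is \emph{not} proved ``symmetrically''. The paper (following \cite{Pri1}) filters by weight, sets up explicit obstruction diagrams $\Obs(-,m+1)\rightrightarrows(-)^{\leq m}$ for each of $\Symp$, $\Pois$, $\Comp$, and shows inductively that the square comparing the $\Comp$ and $\Symp$ obstruction diagrams is Cartesian. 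Your final paragraph correctly flags that obstruction theory is needed, but you should not expect the argument for the $\Symp$-leg to mirror the easy $\Pois$-leg.
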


The proof given in \cite{CPTVV} first proceeds by a reduction to a local statement and then using a strictification provided by a Darboux lemma one shows the equivalence. We will sketch an alternative proof given in \cite{Pri1} in the affine case.

The theorem in both papers is proved by going through an intermediate space of compatible pairs. Let $\pi$ be an $n$-shifted Poisson structure on $A$, a commutative dg algebra. Then $[\pi, -]$ defines a mixed structure on the graded $\bP_{n+2}$-algebra $\Pol(A, n)$. Denote by $\Pol_\pi(A, n)$ the underlying graded mixed commutative dg algebra. By the universal property of the de Rham complex, one has a unique morphism of graded mixed commutative dg algebras
\[\DR(A)\longrightarrow \Pol_\pi(A, n)\]
given by the identity $A\rightarrow A$ in weight $0$ and which sends $f\ddr g\mapsto f[\pi, g]$ in weight $1$.

Splitting the $n$-shifted Poisson structure $\pi=\pi_2+\pi_3+\dots$ according to the weight, we have a natural morphism of graded mixed complexes
\[\sigma\colon k(2)[-1]\longrightarrow \Pol_\pi(A, n)[n+1]\]
given by
\[1\mapsto \sum_{m\geq 2} (m-1) \pi_m.\]

\begin{defn}
Let $\omega\in\cA^{2, cl}(A, n)$ be a closed two-form of degree $n$ on $A$ defining a morphism
\[\omega\colon k(2)[-1]\longrightarrow \DR(X)[n+1]\]
of graded mixed complexes. Let $\pi$ be an $n$-shifted Poisson structure on $A$. The \defterm{compatibility between $\omega$ and $\pi$} is a commutativity data in the diagram of graded mixed complexes
\[
\xymatrix{
k(2)[-1] \ar^{\sigma}[rr] \ar_{\omega}[dr] && \Pol_\pi(X, n) \\
& \DR(X) \ar[ur] &
}
\]
\end{defn}

We denote by $\Comp(A, n)$ the space of such compatible pairs. We have natural forgetful maps
\[
\xymatrix{
& \Comp(A, n) \ar[dl] \ar[dr] & \\
\cA^{2, cl}(A, n) && \Pois(A, n)
}
\]

\begin{lm}
Let $(\omega, \pi)\in\Comp(A, n)$ be a compatible pair on $A$. Then the composite
\[\bL_X\xrightarrow{\pi^\sharp} \bT_X[-n]\xrightarrow{\omega^\sharp} \bL_X\xrightarrow{\pi^\sharp} \bT_X[-n]\]
is homotopic to $\pi^\sharp$.
\label{lm:piomegapi}
\end{lm}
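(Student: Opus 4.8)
The plan is to read off the claimed homotopy from the \emph{weight $2$} component of the compatibility datum, after writing all the maps explicitly in terms of the generators of $\DR(A)$ and $\Pol_\pi(A,n)$. First I would use the identifications $\DR(A)=\Sym_A(\bL_A[-1])$ and (via \cref{prop:polyvectorsgrcdga}) $\Pol(A,n)\cong \Sym_A(\bT_A[-n-1])$, so that both are free graded-commutative $A$-algebras on their weight-$1$ parts $\bL_A[-1]$ and $\bT_A[-n-1]$. The structural map $\DR(A)\to \Pol_\pi(A,n)$ is a morphism of graded-commutative algebras and is therefore determined by its restriction to weight $1$, namely $f\,\ddr g\mapsto f[\pi,g]$. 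Since $[\pi,g]=[\pi_2,g]+[\pi_3,g]+\cdots$ has components in weights $1,2,3,\dots$, this map only \emph{raises} weight, and its weight-preserving part $f\,\ddr g\mapsto f[\pi_2,g]$ is exactly the map induced by the underlying bivector, i.e. a shift of $\pi^\sharp\colon \bL_A\to \bT_A[-n]$.

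Next I would compute the weight-$2$ data. Because $\DR(A)$ is freely generated in weight $1$, its weight-$2$ summand is $\Sym^2(\bL_A[-1])$, and multiplicativity forces the leading (weight $2\to 2$) part of the structural map to be $\Sym^2$ of its weight-$1$ part, i.e. $\Sym^2(\pi^\sharp)\colon \Sym^2(\bL_A[-1])\to \Sym^2(\bT_A[-n-1])$; no further contribution to weight $2$ is possible, since weight-$1$ generators land in weights $\geq 1$ and their products in weights $\geq 2$. The closed two-form is a sequence $\omega=\omega_2+\omega_3+\cdots$ whose underlying two-form $\omega_2\in\Sym^2(\bL_A[-1])$ corresponds to $\omega^\sharp$, while the weight-$2$ part of $\sigma$ is $(2-1)\pi_2=\pi_2\in\Sym^2(\bT_A[-n-1])$, corresponding to $\pi^\sharp$. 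As the structural map raises weight, the weight-$2$ part of the composite $(\DR(A)\to\Pol_\pi(A,n))\circ\omega$ receives contributions only from $\omega_2$ through the leading part, giving $\Sym^2(\pi^\sharp)(\omega_2)$.

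Finally, the compatibility datum is a homotopy between $\sigma$ and this composite inside $\Pol_\pi(A,n)[n+1]$. Because the mixed structure $\epsilon=[\pi,-]$ and the structural map both raise weight while $\sigma$ and $\omega$ are supported in weights $\geq 2$, the weight-$2$ summand is the lowest one and carries only the internal differential; hence restricting the homotopy to weight $2$ is well defined and yields a homotopy $\Sym^2(\pi^\sharp)(\omega_2)\simeq\pi_2$ in $(\Sym^2(\bT_A[-n-1]),d)$. Translating symmetric two-tensors into their associated sharp maps — $\pi_2\leftrightarrow\pi^\sharp$, and $\Sym^2(\pi^\sharp)(\omega_2)\leftrightarrow \pi^\sharp\circ\omega^\sharp\circ\pi^\sharp$ by pushing $\omega^\sharp$ through both slots with $\pi^\sharp$ — turns this into the asserted homotopy $\pi^\sharp\circ\omega^\sharp\circ\pi^\sharp\simeq\pi^\sharp$. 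The main obstacle is the bookkeeping: one must check that the higher-weight pieces $\pi_{\geq 3}$ and $\omega_{\geq 3}$ genuinely never feed into this weight-$2$ identity (a pure weight count, using that nothing lowers weight), and that the passage from a chain homotopy of two-tensors to a homotopy of morphisms $\bL_A\to\bT_A[-n]$ is legitimate, for which perfectness of $\bL_A$ is used so that $\Sym^2(\bT_A[-n-1])$ computes symmetric pairings and the sharp correspondence is an equivalence.
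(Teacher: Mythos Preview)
The paper states this lemma without proof (it appears in the sketch of Pridham's argument from \cite{Pri1} and is simply quoted). Your argument is correct and is exactly the expected one: project the compatibility homotopy $\sigma\simeq\mu(\omega)$ to its lowest weight, namely weight~$2$, where the mixed structure $[\pi,-]$ and the higher $\omega_m$, $\pi_m$ cannot contribute, obtaining a chain homotopy $\pi_2\simeq \Sym^2(\pi^\sharp)(\omega_2)$ in $\Pol(A,n)(2)$; then pass from symmetric two-tensors to their associated sharp maps, using that pushing $\omega_2$ through $\Sym^2(\pi^\sharp)$ corresponds to $\pi^\sharp\circ\omega^\sharp\circ\pi^\sharp$. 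The weight bookkeeping you flag (nothing lowers weight, so the weight-$2$ component is a genuine quotient complex with only the internal differential) is the whole point, and your use of perfectness of $\bL_A$ to identify $\Pol(A,n)(2)$ with $\Sym^2_A(\bT_A[-n-1])$ and hence with self-dual maps $\bL_A\to\bT_A[-n]$ is the right justification for the final translation.
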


Let $\Comp^{nd}(A, n)\subset \Comp(A, n)$ be the subspace of compatible pairs $(\omega, \pi)$ where $\pi$ is nondegenerate. In particular, by the previous Lemma we get that $\omega$ is nondegenerate as well, so we obtain projections
\[
\xymatrix{
& \Comp^{nd}(A, n) \ar[dl] \ar[dr] & \\
\Symp(A, n) && \Pois^{nd}(A, n).
}
\]

\begin{lm}
The projection $\Comp^{nd}(A, n)\rightarrow \Pois^{nd}(A, n)$ is an equivalence.
\end{lm}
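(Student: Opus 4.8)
The plan is to prove that the projection is an equivalence by showing that its fibers over each point $\pi\in\Pois^{nd}(A,n)$ are contractible. First I would unwind the definition of $\Comp(A,n)$: a point is a triple $(\omega,\pi,h)$ consisting of a closed two-form $\omega\in\cA^{2,cl}(A,n)$, an $n$-shifted Poisson structure $\pi$, and a homotopy $h$ witnessing the commutativity of the compatibility triangle. Writing $\phi\colon\DR(A)\to\Pol_\pi(A,n)$ for the canonical morphism of graded mixed commutative dg algebras, the fiber of the projection over a fixed $\pi$ is the homotopy fiber over $\sigma$ of the induced map of spaces
\[\phi_*\colon\Map_{\ich^{gr,\epsilon}}(k(2)[-1],\DR(A)[n+1])\longrightarrow\Map_{\ich^{gr,\epsilon}}(k(2)[-1],\Pol_\pi(A,n)[n+1]).\]
Thus it suffices to show that $\phi_*$ is an equivalence whenever $\pi$ is nondegenerate, for then its homotopy fibers are all contractible.

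The heart of the argument is the claim that, for nondegenerate $\pi$, the morphism $\phi$ is itself an equivalence of graded mixed complexes. By construction $\phi$ intertwines the de Rham differential on $\DR(A)$ with the mixed structure $[\pi,-]$ on $\Pol_\pi(A,n)$, so it is a genuine map in $\ich^{gr,\epsilon}$ and it is enough to check that the underlying map of graded complexes is an equivalence weight by weight. Using the identifications $\DR(A)\cong\Sym_A(\bL_A[-1])$ and $\Pol(A,n)\cong\Sym_A(\bT_A[-n-1])$ (the affine analogue of \cref{prop:polyvectorsgrcdga}), the weight $1$ component of $\phi$ is $\pi^\sharp\colon\bL_A\to\bT_A[-n]$ up to a shift, since $\phi$ sends $f\,\ddr g$ to $f[\pi,g]=f\,\pi^\sharp(\ddr g)$; more generally the weight $p$ component is $\Sym^p$ of $\pi^\sharp$. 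As $\pi$ is nondegenerate, $\pi^\sharp$ is an equivalence, and because $\bL_A$ is perfect and we work in characteristic zero its symmetric powers are equivalences as well. Hence $\phi$ is an equivalence in every weight, and therefore an equivalence of graded mixed complexes.

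Granting this, $\phi_*$ is an equivalence of mapping spaces, so the fiber of $\Comp^{nd}(A,n)\to\Pois^{nd}(A,n)$ over every $\pi$ is contractible and the projection is an equivalence. I expect the main obstacle to be the weight-by-weight analysis of $\phi$: one must make precise both that $\phi$ respects the two mixed structures and that its associated graded is computed by the symmetric powers of $\pi^\sharp$, so that nondegeneracy of $\pi$ translates exactly into the equivalence of the comparison map. This is the only place where the restriction to the nondegenerate locus is used; over all of $\Pois(A,n)$ the map $\phi$ fails to be an equivalence and the fibers are genuinely nontrivial.
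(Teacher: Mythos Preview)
Your proof is correct and follows exactly the paper's approach: the paper's proof is the one-line observation that when $\pi$ is nondegenerate, the comparison morphism $\DR(A)\to\Pol_\pi(A,n)$ of graded mixed cdgas is an equivalence, so there is a unique extension of $\pi$ to a compatible pair. You have simply unpacked the details the paper leaves implicit---identifying the fiber over $\pi$ with the homotopy fiber of $\phi_*$ at $\sigma$, and verifying that $\phi$ is an equivalence weight by weight via the symmetric powers of $\pi^\sharp$.
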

\begin{proof}
If $\pi$ is nondegenerate, the induced morphism of graded mixed commutative dg algebras
\[\DR(A)\longrightarrow \Pol_\pi(A, n)\]
is an equivalence and hence there is a unique extension of $\pi$ to a compatible pair.
\end{proof}

Therefore, we just need to show that $\Comp^{nd}(A, n)\rightarrow \Symp(A, n)$ is an equivalence. The strategy used in \cite{Pri1} is to employ the natural filtrations on both spaces and prove the statement by obstruction theory.

That is, let $\DR^{\geq (m+1)}(A)$ be the truncation of $\DR(A)$ in weights at least $m+1$. Let
\[\Symp^{\leq m}(A, n)\subset \Map_{\ich^{gr, \epsilon}}(k(2)[-1], \DR(A) / \DR^{\geq (m+1)}(A))\]
be the subspace of nondegenerate forms. Since
\[\DR(A) = \lim_m \left(\DR(A) / \DR^{\geq (m+1)}(A)\right),\]
we have
\[\Symp(A, n) = \lim_m \Symp^{\leq m}(A, n).\]

One can similarly define $\Pois^{nd, \leq m}(A, n)$ and $\Comp^{nd, \leq m}(A, n)$. Then we can prove a stronger statement:
\begin{lm}
The projection $\Comp^{nd, \leq m}(A, n)\rightarrow \Symp^{\leq m}(A, n)$ is an equivalence.
\end{lm}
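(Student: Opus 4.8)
The plan is to prove the statement by induction on $m$, realizing for each $m$ a homotopy cartesian square whose horizontal maps are the weight truncations $\Symp^{\leq m}(A,n)\to\Symp^{\leq m-1}(A,n)$ and $\Comp^{nd,\leq m}(A,n)\to\Comp^{nd,\leq m-1}(A,n)$ and whose vertical maps are the projections in question. If this square is cartesian, then $\Comp^{nd,\leq m}(A,n)$ is the pullback $\Symp^{\leq m}(A,n)\times_{\Symp^{\leq m-1}(A,n)}\Comp^{nd,\leq m-1}(A,n)$, and since the pullback of an equivalence is an equivalence the inductive hypothesis at level $m-1$ forces the projection at level $m$ to be an equivalence as well. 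For $m\leq 1$ both spaces are empty, as the truncation $\DR(A)/\DR^{\geq m+1}$ kills the weight-$2$ piece carrying the underlying two-form, so there is nothing to prove; the base case $m=2$ is pure linear algebra, since at weight $2$ there is no room for higher closedness or compatibility data and a nondegenerate weight-$2$ two-form $\omega_2$ corresponds under $\omega\mapsto\omega^{-1}$ to a nondegenerate bivector $\pi_2$, the compatibility $\sigma_2=\pi_2=(\pi^\sharp)^{\otimes 2}(\omega_2)$ being exactly the assertion that the two pairings are mutually inverse.

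To see that the square is cartesian I would compare the fibers of the two truncation maps. Fix a point of $\Comp^{nd,\leq m-1}(A,n)$, given by compatible data $(\omega^{(m-1)},\pi^{(m-1)})$ together with the witnessing homotopy, and let $\omega^{(m-1)}$ denote its image in $\Symp^{\leq m-1}(A,n)$. Writing everything according to the weight grading, the fiber of $\Symp^{\leq m}(A,n)\to\Symp^{\leq m-1}(A,n)$ consists of the weight-$m$ components $\omega_m$ extending $\omega^{(m-1)}$ to a closed two-form, i.e. those solving $\d\omega_m=-\epsilon\omega_{m-1}$; this is a torsor governed by the weight-$m$ graded piece of $\DR(A)$. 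The fiber of $\Comp^{nd,\leq m}(A,n)\to\Comp^{nd,\leq m-1}(A,n)$ consists of triples $(\omega_m,\pi_m,h_m)$ extending the chosen datum, with $h_m$ the weight-$m$ part of the compatibility homotopy, and the forgetful map between fibers is simply $(\omega_m,\pi_m,h_m)\mapsto\omega_m$.

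The heart of the argument is to show this forgetful map on fibers is an equivalence, and here nondegeneracy does the work. Since $\pi_2$ is nondegenerate and is already fixed at level $m-1$ (for $m\geq 3$), the induced morphism of graded mixed commutative dg algebras $\DR(A)\to\Pol_\pi(A,n)$, which I denote $\phi$, is on underlying graded algebras the symmetric power $\Sym(\pi^\sharp)$ of the equivalence $\pi^\sharp\colon\bL_A\to\bT_A[-n]$, hence itself an equivalence, exactly as in the proof of the previous Lemma; as a map of graded \emph{mixed} complexes it intertwines the de Rham differential with $[\pi,-]$, weaving the higher $\pi_p$ into the mixed structure. The compatibility datum is then a homotopy between $\phi\circ\omega$ and $\sigma=\sum_{p\geq 2}(p-1)\pi_p$, and in weight $m$ this reads
\[
(m-1)\,\pi_m = (\phi\circ\omega)_m \quad\text{modulo already-determined lower-weight contributions.}
\]
As $m-1$ is invertible in the characteristic-zero field $k$, this determines $\pi_m$ uniquely from $\omega_m$, and the residual choice of $h_m$ then lies in a contractible space because $\phi$ is an equivalence. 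Thus for each admissible $\omega_m$ the space of $(\pi_m,h_m)$ is contractible, the map of fibers is an equivalence, the square is homotopy cartesian, and the induction closes.

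The hard part will be the fiber identification together with the verification that $h_m$ contributes only a contractible space of choices: one must check that the derived constraints coupling $\omega_m$, $\pi_m$ and $h_m$, rather than their naive linearizations, are transported correctly by the graded mixed map $\phi$, and in particular that the weight-$m$ equation is genuinely triangular, with leading term $(m-1)\pi_m$ solvable against known lower-weight data instead of implicit in $\pi_m$ through the mixed structure of $\Pol_\pi(A,n)$. Once this triangularity and the equivalence $\phi\simeq\Sym(\pi^\sharp)$ are in hand, passing to the limit over $m$ recovers \cref{thm:nondegeneratepoisson} in the affine case, since $\Symp(A,n)=\lim_m\Symp^{\leq m}(A,n)$ and likewise for $\Comp^{nd}(A,n)$.
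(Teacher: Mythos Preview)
Your approach is essentially the same inductive strategy the paper sketches: base case $m=2$ via linear algebra, then climb the weight filtration. The difference is in how the inductive step is packaged. The paper sets up \emph{obstruction diagrams}: for each of $\Symp$, $\Pois$, $\Comp$ it builds a trivial local system $\Obs(-,m+1)$ over the level-$m$ space with two sections (zero and the genuine obstruction), identifies the level-$(m+1)$ space as the vanishing locus, and then reduces the inductive step to the single statement that the square
\[
\xymatrix{
\Obs(\Comp, m+1) \ar[r] \ar[d] & \Comp^{nd,\leq m}(A,n) \ar[d] \\
\Obs(\Symp, m+1) \ar[r] & \Symp^{\leq m}(A,n)
}
\]
is Cartesian. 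Your direct fiber comparison unpacks the same content without the obstruction-theoretic formalism: the map $\phi=\Sym(\pi_2^\sharp)$ and the triangular equation $(m-1)\pi_m = \phi(\omega_m)+\text{(lower)}$ are exactly what makes this square Cartesian.

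There is one point your sketch does not address and which the paper's formalism absorbs into the Cartesian-square lemma. Having solved for $\pi_m$ from the compatibility equation, you still owe the verification that this $\pi_m$ satisfies the Maurer--Cartan equation $\d\pi_m+\tfrac{1}{2}\sum_{i+j=m+1}[\pi_i,\pi_j]=0$; being in the fiber of $\Comp^{nd,\leq m}\to\Comp^{nd,\leq m-1}$ requires this, not just compatibility. It is not automatic from ``$\phi$ is an equivalence of graded cdgas'': you need that $\phi$ intertwines the mixed structures $\ddr$ and $[\pi,-]$, and the latter depends on the very $\pi_m$ you are constructing. In the paper's language this is precisely why the obstruction space for $\Comp$ bundles the $\Pois$ obstruction together with the $\Symp$ and compatibility obstructions, and the Cartesian-square lemma is the assertion that the $\Pois$ obstruction is forced. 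Your ``hard part'' paragraph gestures at this, but you should make explicit that closedness of $\omega$ at weight $m$, transported through $\phi$, is what kills the Poisson obstruction for $\pi$ at weight $m$.
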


This statement is proved by induction as follows. First, for $m=2$ the statement reduces to a problem in linear algebra: given a nondegenerate two-form $\omega$, by \cref{lm:piomegapi} we have to show there is a unique non-degenerate bivector $\pi$ such that
\[\pi^\sharp\cong \pi^\sharp\circ \omega^\sharp\circ \pi^\sharp.\]
By nondegeneracy of $\pi^\sharp$ this is equivalent to $\pi^\sharp \cong (\omega^\sharp)^{-1}$. The inductive statement requires one to construct obstruction spaces.

\begin{defn}
Suppose
\begin{equation}
\xymatrix{
O \ar@/^1.5pc/^{p}[r] & X  \ar@<.5ex>^{s_1}[l] \ar@<-.5ex>_{s_2}[l]
}
\label{eq:vanishinglocus}
\end{equation}
is a diagram of simplicial sets together with homotopies $p\circ s_1\cong \id_X$ and $p\circ s_2\cong \id_X$. Its \defterm{vanishing locus} is the limit of
\[
\xymatrix{
X & O \ar_{p}[l] & X  \ar@<.5ex>^{s_1}[l] \ar@<-.5ex>_{s_2}[l]
}
\]
\end{defn}

In other words, the vanishing locus of the diagram \eqref{eq:vanishinglocus} consists of a point $x\in X$ and a homotopy $s_1(x)\sim s_2(x)$ in $p^{-1}(x)$. It is called a vanishing locus since the section $s_2$ in examples will be a ``zero section'' in some sense.

Studying the problem of lifting a Poisson structure $\pi\in\Pois^{\leq m}(A, n)$ to $\Pois^{\leq(m+1)}(A, n)$ one naturally encounters obstructions. Indeed, suppose
\[\pi = \pi_2 + \dots + \pi_m\in\Pois^{\leq m}(A, n).\]
Its lift
\[\pi' = \pi_2 + \dots + \pi_m + \pi_{m+1}\in\Pois^{\leq(m+1)}(A, n)\]
satisfies the Maurer--Cartan equation
\[\d \pi' + \frac{1}{2}[\pi', \pi'] = 0\]
in the truncation $\Pol^{\leq (m+1)}(A, n)$. Since $\pi$ satisfies the Maurer--Cartan equation up to weight $m$, we need to check the Maurer--Cartan equation only in weight $(m+1)$ which gives
\[\d \pi_{m+1} + \frac{1}{2}\sum_{i+j=m+2}[\pi_i, \pi_j] = 0.\]

Thus, the fiber of $\Pois^{\leq(m+1)}(A, n)\rightarrow \Pois^{\leq m}(A, n)$ at $\pi$ consists of null-homotopies of
\[\obs(\pi) = \frac{1}{2}\sum_{i+j=m+2}[\pi_i, \pi_j] \in\Pol^{m+1}(A, n).\]

Denote by $\Obs(\Pois, m+1)\rightarrow \Pois^{\leq m}(A, n)$ the trivial local system over $\Pois^{\leq m}(A, n)$ with fiber $\Pol^{m+1}(A, n)$. Then we have two sections: the zero section and $\obs(-)$. Thus, we obtain an obstruction diagram as in \eqref{eq:vanishinglocus}. One similarly constructs obstruction diagrams
\[
\xymatrix{
\Obs(\Symp, m+1) \ar@/^1.5pc/[r] & \Symp^{\leq m}(A, n) \ar@<.5ex>^-{\obs}[l] \ar@<-.5ex>_-{0}[l]
}
\]
and
\[
\xymatrix{
\Obs(\Comp, m+1) \ar@/^1.5pc/[r] & \Comp^{\leq m}(A, n) \ar@<.5ex>^-{\obs}[l] \ar@<-.5ex>_-{0}[l]
}
\]

These obstruction diagrams restrict to obstruction diagrams for nondegenerate pairs. Then one proves the following key lemmas.

\begin{lm}
The vanishing locus of
\[
\xymatrix{
\Obs(\Comp, m+1) \ar@/^1.5pc/[r] & \Comp^{\leq m}(A, n) \ar@<.5ex>[l] \ar@<-.5ex>[l]
}
\]
is equivalent to
\[\Comp^{\leq(m+1)}(A, n)\longrightarrow \Comp^{\leq m}(A, n)\]
and similarly for $\Symp$ and $\Pois$.
\end{lm}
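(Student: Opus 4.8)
The plan is to reduce all three cases to a single obstruction-theoretic mechanism and then, in each case, to identify the vanishing locus with the homotopy fibre of the truncation map. The common principle is this: each of the towers $\{\Pois^{\leq m}(A,n)\}$, $\{\Symp^{\leq m}(A,n)\}$, $\{\Comp^{\leq m}(A,n)\}$ arises as the tower of Maurer--Cartan (respectively mapping) spaces of a weight-filtered object, and the fibre of each stage over the previous one is naturally the space of null-homotopies of an obstruction cocycle living in weight $m+1$. By construction this null-homotopy space is exactly what the vanishing locus of the diagram computes, since a point of that vanishing locus is a point $x$ of the base together with a path from $\obs(x)$ to $0$ in the fibre $p^{-1}(x)$.

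First I would treat the Poisson case, where everything is explicit. Recall $\Pois(A,n)\simeq \uMC(\g^{\geq 2})$ with $\g = \Pol(A,n)[n+1]$; since the bracket has weight $-1$, the weights $\geq k$ form a dg Lie subalgebra for $k\geq 1$ and $\Pois^{\leq m}(A,n)\simeq \uMC(\g^{[2,m]})$ for the truncation to weights $[2,m]$. The weight-$(m+1)$ piece $\g_{m+1}$ is a central abelian ideal of $\g^{[2,m+1]}$, because any bracket involving it lands in weight $\geq m+2$. Hence a lift of a Maurer--Cartan element $\pi = \pi_2 + \dots + \pi_m$ is $\pi + \pi_{m+1}$, where the only surviving component of the Maurer--Cartan equation is the one in weight $m+1$, namely $\d\pi_{m+1} + \obs(\pi) = 0$ with $\obs(\pi) = \tfrac{1}{2}\sum_{i+j=m+2}[\pi_i,\pi_j]$. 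Reading the Bianchi identity $\d F + [\pi, F] = 0$ for $F = \d\pi + \tfrac12[\pi,\pi]$ in weight $m+1$, and using that $F$ vanishes in weights $\leq m$, shows $\d\,\obs(\pi) = 0$; so $\obs(\pi)$ is a cocycle in the complex $\g_{m+1}$, and by the Getzler--Hinich description of Maurer--Cartan fibres of a central extension the fibre of $\uMC(\g^{[2,m+1]})\to \uMC(\g^{[2,m]})$ over $\pi$ is the space of null-homotopies of $\obs(\pi)$. This matches the vanishing locus of the obstruction diagram fibrewise over $\Pois^{\leq m}(A,n)$, and naturality in $\pi$ upgrades the fibrewise match to an equivalence over the base.

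For the symplectic case the same argument runs in its linearized form. Here $\Symp^{\leq m}(A,n)$ is a subspace of the mapping space $\Map_{\ich^{gr,\epsilon}}(k(2)[-1], \DR(A)/\DR^{\geq(m+1)}(A)[n+1])$, which is stable, so the analogue of the quadratic Maurer--Cartan obstruction is the \emph{linear} obstruction given by the mixed differential $\epsilon$ applied to the top component $\omega_m$; it is a cocycle in weight $m+1$ since $\omega_m$ is closed and $\epsilon$ is square-zero, and its null-homotopy space is again the vanishing locus. Nondegeneracy is an open condition preserved by the truncation maps, so it may be imposed throughout without affecting the fibrewise analysis. The compatible-pair case combines the two: $\Comp^{\leq m}(A,n)$ is the weight-$\leq m$ truncation of the object controlling compatible pairs, assembled from $\DR(A)$, $\Pol(A,n)$ and the compatibility triangle through $\sigma$, and its successive fibre is once more the null-homotopy space of a combined obstruction in weight $m+1$. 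Because the forgetful maps $\Comp\to\Symp$ and $\Comp\to\Pois$ respect the weight filtrations, the three obstruction diagrams fit into a compatible system, and the identification of the vanishing locus with $\Comp^{\leq(m+1)}(A,n)\to\Comp^{\leq m}(A,n)$ follows from the Poisson and symplectic identifications together with the commutation of homotopy limits.

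The main obstacle is the compatible-pair case, specifically the $\pi$-dependence of the graded mixed algebra $\Pol_\pi(A,n)$, whose mixed structure is twisted by $[\pi,-]$. This makes the relevant obstruction system over $\Comp^{\leq m}(A,n)$ a genuinely nonconstant family rather than a trivial one, so I would need to check both that the obstruction class remains a cocycle for the twisted differential and that centrality of the weight-$(m+1)$ piece trivializes the $\ad_\pi$-twist on exactly the graded summand where the obstruction lives; only then does the fibre reduce to a null-homotopy space modelled by the same vanishing-locus diagram. The remaining technical point, common to all three cases, is the rigid comparison between the simplicial vanishing-locus construction and the $\infty$-categorical homotopy fibre of the truncation map --- that is, verifying that the trivial local system equipped with the two sections $\obs$ and $0$ genuinely models the fibre including all higher simplices --- which I would settle by invoking the Maurer--Cartan fibration results of \cite{Get} and \cite{Hin} cited earlier.
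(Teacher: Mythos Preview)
The paper does not actually prove this lemma: it is stated without proof as one of two ``key lemmas'' in the sketch of \cref{thm:nondegeneratepoisson}, with the details deferred to \cite{Pri1}. So there is no proof in the paper to compare against; I can only assess your argument on its own terms.

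Your argument is the standard obstruction-theoretic one and is correct in outline. The $\Pois$ case is fully convincing: the weight-$(m+1)$ part of $\Pol(A,n)[n+1]$ is a central abelian ideal in $\g^{[2,m+1]}$ because the bracket drops weight by one, and the Getzler--Hinich description of the Maurer--Cartan fibration for a central extension gives exactly the null-homotopy space of the obstruction cocycle as the fibre. The $\Symp$ case is genuinely easier, as you note, since it is a mapping space in a stable setting and the tower is a tower of principal fibrations for the abelian group objects $|\DR^p(A)[n+2]|$; your identification of the obstruction with $\epsilon(\omega_m)$ is right.

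The one place where your write-up is loose is the $\Comp$ case. You say the identification ``follows from the Poisson and symplectic identifications together with the commutation of homotopy limits,'' but $\Comp^{\leq m}(A,n)$ is not simply a limit of $\Pois^{\leq m}$ and $\Symp^{\leq m}$: the compatibility triangle involves the map $\DR(A)\to\Pol_\pi(A,n)$ whose target depends on the Poisson structure through the twist $[\pi,-]$. You correctly flag this in your final paragraph, and the resolution you gesture at is the right one: in weight $m+1$ the twist $\ad_\pi$ acts by brackets of weight $\geq 1$, hence lands in weight $\geq m+2$ and vanishes in the truncation, so the weight-$(m+1)$ obstruction complex for $\Comp$ really is the fibre product of those for $\Pois$ and $\Symp$ over the compatibility map, with untwisted differential. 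Making this precise---writing $\Comp^{\leq m}$ as the Maurer--Cartan space of an $L_\infty$ algebra (or as a mapping space in a suitable filtered category) and checking that the associated graded in weight $m+1$ is abelian---is exactly what \cite{Pri1} does; your sketch would benefit from stating this reduction explicitly rather than leaving it as a ``remaining technical point.''
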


\begin{lm}
The square
\[
\xymatrix{
\Obs(\Comp, m+1) \ar[r] \ar[d] & \Comp^{nd, \leq m}(A, n) \ar[d] \\
\Obs(\Symp, m+1) \ar[r] & \Symp^{\leq m}(A, n)
}
\]
is Cartesian.
\end{lm}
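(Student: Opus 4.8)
The plan is to prove that the square is Cartesian by reducing to a comparison of fibers of the two horizontal maps, and then using the nondegeneracy hypothesis to identify those fibers. Recall that each obstruction object is, by construction, a \emph{trivial} local system over its base: the fiber of $\Obs(\Symp, m+1)$ is the relevant cohomological degree of the weight-$(m+1)$ part of $\DR(A)$, recording the failure of closedness in the next weight, while the fiber of $\Obs(\Pois, m+1)$ is $\Pol^{m+1}(A, n)$, recording the obstruction $\obs(\pi)$ to the Maurer--Cartan equation. Since the horizontal maps are simply the bundle projections of these trivial local systems, the square is Cartesian precisely when the induced comparison $F_\Comp \to F_\Symp$ of fibers is an equivalence, compatibly with the right vertical projection $\Comp^{nd, \leq m}(A, n)\to \Symp^{\leq m}(A, n)$.

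First I would make the fiber $F_\Comp$ explicit. Using the description of $\Comp(A, n)$ as the space of commutativity data for the triangle relating $\omega$, the map $\sigma$, and the canonical morphism $\DR(A)\to \Pol_\pi(A, n)$, a point of $\Comp^{nd, \leq m}(A, n)$ is a triple $(\omega, \pi, h)$ of a nondegenerate closed two-form, a nondegenerate Poisson structure, and a homotopy $h$ witnessing compatibility, each truncated at weight $m$. Extending such a triple by one weight is governed by three obstructions in weight $m+1$: the closedness obstruction for $\omega$ valued in $\DR(A)$, the Maurer--Cartan obstruction $\obs(\pi)$ valued in $\Pol(A, n)$, and the obstruction to extending the homotopy $h$ valued in $\Pol_\pi(A, n)$. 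Concretely this exhibits $F_\Comp$ as the total fiber of the weight-$(m+1)$ part of the morphism $\DR(A)\to \Pol_\pi(A, n)$, fitting into a fiber sequence whose outer terms are $F_\Symp$ and the $\Pois$-obstruction fiber.

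The crucial input is the observation already invoked in the proof of the preceding Lemma: for a nondegenerate $\pi$ the canonical morphism of graded mixed commutative dg algebras
\[\DR(A)\longrightarrow \Pol_\pi(A, n)\]
is an equivalence, since its degree-one component is built from $\pi^\sharp$ and the higher weights are symmetric powers of it. Restricting the three obstruction local systems to their nondegenerate loci, this equivalence identifies the weight-$(m+1)$ $\Pois$-piece of $F_\Comp$ with the $\Symp$-piece and renders the remaining compatibility-homotopy piece contractible, because the homotopy extending $h$ by one weight is then uniquely determined. Hence $F_\Comp \to F_\Symp$ is an equivalence, and since both local systems are trivial this identification is compatible with the projection to $\Symp^{\leq m}(A, n)$; the square is therefore Cartesian.

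The hard part will be the second step: pinning down the deformation complex that controls extensions of a compatibility datum and verifying that its weight-$(m+1)$ part really does sit in the claimed fiber sequence built from the morphism $\DR(A)\to \Pol_\pi(A, n)$. Once this bookkeeping is in place, the single equivalence $\DR(A)\simeq \Pol_\pi(A, n)$ does all the work, collapsing the a priori three-fold $\Comp$-obstruction onto the $\Symp$-obstruction; everything else reduces to tracking weights and cohomological degrees.
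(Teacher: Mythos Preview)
The paper does not actually prove this lemma: it is stated without proof as part of a sketch of Pridham's argument, with the details deferred to \cite{Pri1}. So there is no in-paper argument to compare your proposal against, and the question is whether your outline is a viable route to a proof.

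Your core idea is the right one and is the same idea that drives the surrounding lemmas in the paper: for a nondegenerate $\pi$ the canonical morphism $\DR(A)\to\Pol_\pi(A,n)$ is an equivalence of graded mixed cdgas, and this is what forces the obstruction fibers to match. Reducing the Cartesian property to an identification of fibers of trivial local systems is also the natural move, and consistent with how the paper sets up $\Obs(\Pois,m+1)$.

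Where your write-up is shaky is the description of $F_{\Comp}$. Calling it ``the total fiber of the weight-$(m+1)$ part of $\DR(A)\to\Pol_\pi(A,n)$'' is at odds with your conclusion: the homotopy fiber of an equivalence is contractible, not equivalent to $F_{\Symp}$. Likewise, the claimed fiber sequence with ``outer terms $F_{\Symp}$ and the $\Pois$-obstruction fiber'' is not specified, and as written it would suggest $F_{\Comp}$ is built from \emph{both} $F_{\Symp}$ and $F_{\Pois}$ as separate summands, which is too large. What you presumably want is that the three pieces of data $(\omega,\pi,h)$ make $F_{\Comp}$ a homotopy \emph{pullback} of the form $\DR^{m+1}\times_{\Pol_\pi^{m+1}}\Pol^{m+1}$ (with the appropriate shifts), one leg coming from the canonical map and the other from the construction of $\sigma$; then nondegeneracy makes the first leg an equivalence and the projection to $F_{\Symp}$ becomes an equivalence. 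You also need to check that the two \emph{sections} (the obstruction section and the zero section) are carried to one another under this identification, not just the total spaces, since it is the compatibility of the full obstruction diagrams that yields the inductive step. You flag the bookkeeping as ``the hard part,'' which is honest; just be aware that your current description of $F_{\Comp}$ does not yet pin down a well-defined object, and sorting that out is essentially the whole content of the lemma.
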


This proves the inductive step and hence \cref{thm:nondegeneratepoisson}.

\ 

One also has a relative analog of \cref{thm:nondegeneratepoisson}. Given an $n$-shifted coisotropic structure $(\pi, \gamma)$ on $f\colon L\rightarrow X$, where $\pi$ is an $n$-shifted Poisson structure on $X$, one has an induced morphism
\[\gamma^\sharp\colon \bL_{L/X}\longrightarrow \bT_L[-n].\]

\begin{defn}
Let $f\colon L\rightarrow X$ be a morphism of derived Artin stacks locally of finite presentation.
\begin{itemize}
\item Suppose $(\pi, \gamma)$ is an $n$-shifted coisotropic structure on $f$. It is \defterm{nondegenerate} if the $n$-shifted Poisson structure $\pi$ on $X$ is nondegenerate and $\gamma^\sharp\colon \bL_{L/X}\rightarrow \bT_L[-n]$ is an equivalence.

\item $\Cois^{nd}(f, n)\subset \Cois(f, n)$ is the subspace of nondegenerate $n$-shifted coisotropic structures.
\end{itemize}
\end{defn}

The following result was proved in \cite{Pri4} for $n=0$ and \cite[Theorem 4.22]{MS2} for any $n$.

\begin{thm}
Suppose $f\colon L\rightarrow X$ is a morphism of derived Artin stacks locally of finite presentation. Then one has an equivalence of spaces
\[\Cois^{nd}(f, n)\cong \Lagr(f, n)\]
fitting into a diagram
\[
\xymatrix{
\Cois^{nd}(f, n) \ar^{\sim}[r] \ar[d] & \Lagr(f, n) \ar[d] \\
\Pois^{nd}(X, n) \ar^{\sim}[r] & \Symp(X, n).
}
\]

Under this equivalence the diagram
\[
\xymatrix{
\bL_{L/X}[-1] \ar[r] \ar^{\gamma^\sharp}_{\sim}[d] & f^*\bL_X \ar[r] \ar^{\pi^\sharp}_{\sim}[d] & \bL_L \ar^{\gamma^\sharp}_{\sim}[d] \\
\bT_L[-n] \ar[r] & f^*\bT_X[-n] \ar[r] & \bT_{L/X}[1-n]
}
\]
induced by a nondegenerate $n$-shifted coisotropic structure $(\pi, \gamma)$ is inverse to the diagram
\[
\xymatrix{
\bT_L[-n] \ar[r] \ar^{\lambda^\sharp}_{\sim}[d] & f^*\bT_X[-n] \ar[r] \ar^{\omega^\sharp}_{\sim}[d] & \bT_{L/X}[1-n] \ar^{\lambda^\sharp}_{\sim}[d] \\
\bL_{L/X}[-1] \ar[r] & f^*\bL_X \ar[r] & \bL_L
}
\]
induced by the corresponding $n$-shifted Lagrangian structure $(\omega, \lambda)$.
\end{thm}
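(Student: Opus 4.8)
The plan is to adapt the obstruction-theoretic proof of the absolute statement (\cref{thm:nondegeneratepoisson}) to the relative setting by introducing an intermediate space of relative compatible pairs. Concretely, I would define a space $\Comp(f, n)$ whose points consist of an $n$-shifted coisotropic structure $(\pi, \gamma)$ on $f$, an $n$-shifted isotropic structure $(\omega, \lambda)$ on $f$, and compatibility data extending the absolute compatibility of $(\omega, \pi)$ on $X$ to the relative data along $L$. This space maps to both $\Cois(f, n)$ and $\Isot(f, n)$, and by construction these projections lie over the absolute maps $\Comp(X, n) \to \Pois(X, n)$ and $\Comp(X, n) \to \cA^{2, cl}(X, n)$. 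The goal is then to show that after restricting to nondegenerate objects both legs become equivalences and that the resulting identification $\Cois^{nd}(f, n) \cong \Lagr(f, n)$ sits over the equivalence $\Pois^{nd}(X, n) \cong \Symp(X, n)$ of \cref{thm:nondegeneratepoisson}, which also supplies the bottom row of the square in the statement.

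The construction of $\Comp(f, n)$ rests on two parallel fiber sequences. On the Poisson side, property (2) of the forgetful functor $\U$ from \cref{sect:relpoisson} gives a fiber sequence of graded Lie algebras
\[\Pol(L/X, n-1)[n] \longrightarrow \Pol(f, n)[n+1] \longrightarrow \Pol(X, n)[n+1],\]
so that by \cref{prop:coisotropicpolyvectors} the space $\Cois(f, n)$ is a Maurer--Cartan space lying over that of $\Pol(X, n)[n+1]$ with fiber the relative polyvectors $\Pol(L/X, n-1)$. On the symplectic side, $\Isot(f, n)$ is governed by $\fib(f^*\colon \DR(X) \to \DR(L))$, which sits in the analogous fiber sequence with $\DR(X)$ and the relative de Rham complex. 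I would encode the compatibility data by twisting $\Pol(f, n)$ by the mixed structure $[\pi, -]$ and producing, as in the affine absolute case, a canonical morphism from the relative de Rham complex to the $\pi$-twisted relative polyvectors.

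Next, I would check that $\Comp^{nd}(f, n) \to \Cois^{nd}(f, n)$ is an equivalence. As in the affine absolute argument, nondegeneracy makes the comparison morphism $\DR \to \Pol_\pi$ an equivalence both absolutely on $X$ and, crucially, on the relative part: nondegeneracy of $\gamma^\sharp\colon \bL_{L/X} \to \bT_L[-n]$ forces the relative de Rham complex to map isomorphically to the relative twisted polyvectors, so every nondegenerate coisotropic structure extends uniquely to a compatible pair. At this stage a relative analog of \cref{lm:piomegapi} yields the identity $\gamma^\sharp \circ \lambda^\sharp \circ \gamma^\sharp \cong \gamma^\sharp$, together with its absolute counterpart $\pi^\sharp \circ \omega^\sharp \circ \pi^\sharp \cong \pi^\sharp$ on $X$; nondegeneracy then gives $\lambda^\sharp = (\gamma^\sharp)^{-1}$ and $\omega^\sharp = (\pi^\sharp)^{-1}$, which is exactly the assertion that the two displayed $\sharp$-diagrams in the statement are mutually inverse.

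The remaining and hardest step is the equivalence $\Comp^{nd}(f, n) \to \Lagr(f, n)$, for which I would run the obstruction-theoretic induction of the absolute proof relatively. Using the weight filtrations on $\Pol(f, n)$, on $\fib(f^*\DR)$ and on $\Comp(f, n)$, each stage of the induction splits, via the two fiber sequences above, into an absolute problem on $X$ already solved by \cref{thm:nondegeneratepoisson} and a genuinely relative problem governed by $\Pol(L/X, n-1)$ and the relative de Rham complex. The main obstacle is to show that the weight-$(m+1)$ obstruction squares remain Cartesian in the relative setting and that nondegeneracy is preserved along the filtration; the base case $m = 2$ reduces to linear algebra, namely verifying that the bivector and the two-form interchange the two horizontal fiber sequences $\bT_L[-n] \to f^*\bT_X[-n] \to \bT_{L/X}[1-n]$ and $\bL_{L/X}[-1] \to f^*\bL_X \to \bL_L$. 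This compatibility of the relative and absolute nondegeneracy conditions is precisely what makes the obstruction squares Cartesian, completing the induction and hence the theorem.
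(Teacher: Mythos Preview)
The paper does not actually prove this theorem: it only states the result and cites \cite{Pri4} (for $n=0$) and \cite[Theorem 4.22]{MS2} (for general $n$). So there is no in-paper proof to compare against; the paper's sketch of the obstruction-theoretic argument applies only to the absolute statement \cref{thm:nondegeneratepoisson}.

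That said, your proposal is essentially the strategy carried out in the cited references: one introduces a relative space of compatible pairs sitting over the absolute one, proves that the projection to $\Cois^{nd}$ is an equivalence using that nondegeneracy makes the comparison map $\DR\to\Pol_\pi$ an equivalence, and then runs Pridham's filtered obstruction argument for the projection to $\Lagr$. So your outline is on the right track and matches the literature. One point that deserves more care than you give it is the construction of the comparison map in the relative case: the relative polyvectors $\Pol(L/X,n-1)$ are built from $\bT_{L/X}$, while the isotropic side involves $\fib(\DR(X)\to\DR(L))$, and the map between them is not simply induced by $\gamma^\sharp$ in weight one but uses the full $\bP_{[n+1,n]}$-structure on $(\Pol(X,n),\Pol(L/X,n-1))$; in \cite{MS2} this is handled by working with the pair $(\DR(X),\DR(L))$ as a graded mixed algebra mapping to the pair $(\Pol_\pi(X,n),\Pol_\gamma(L/X,n-1))$ rather than passing to fibers first. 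Once that comparison is set up correctly, the rest of your inductive plan goes through as you describe.
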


\begin{cor}
Under the previous assumptions we have a forgetful map
\[\Lagr(f, n)\cong \Cois^{nd}(f, n)\longrightarrow \Pois(L, n-1).\]

Given a Lagrangian structure $(\omega, \lambda)$ on $f$, the induced $(n-1)$-shifted Poisson structure on $L$ has bivector given by the composite
\[\bL_L\longrightarrow \bL_{L/X}\xrightarrow{(\lambda^\sharp)^{-1}} \bT_L[1-n].\]
\label{cor:lagrangianpoisson}
\end{cor}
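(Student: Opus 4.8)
The plan is to extract both assertions from the preceding relative nondegeneracy theorem together with the general forgetful structure carried by coisotropic morphisms. For the map itself there is nothing to do beyond composing the equivalence $\Cois^{nd}(f, n) \cong \Lagr(f, n)$ with the inclusion $\Cois^{nd}(f, n) \subset \Cois(f, n)$ and with the forgetful map $\Cois(f, n) \to \Pois(L, n-1)$ to the source Poisson structure (one of the two legs of the span recorded in \cref{sect:relpoisson}, transported to the stacky setting by formal localization). This yields the desired $\Lagr(f, n) \to \Pois(L, n-1)$ immediately, so all the content lies in identifying the underlying bivector.

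Since a bivector is weight-$2$ data, the second assertion is a statement about linearizations, namely about a single map $\bL_L \to \bT_L[1-n]$ in $\QCoh(L)$. I would first replace $(\omega, \lambda)$ by the corresponding nondegenerate coisotropic structure $(\pi, \gamma)$ and invoke the theorem, which asserts that its two displayed diagrams of fiber sequences are mutually inverse. In particular the left vertical equivalence of the coisotropic diagram, $\gamma^\sharp \colon \bL_{L/X} \to \bT_L[1-n]$, is inverse to the left vertical equivalence $\lambda^\sharp \colon \bT_L[1-n] \to \bL_{L/X}$ of the Lagrangian diagram, so $(\lambda^\sharp)^{-1} = \gamma^\sharp$. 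It therefore suffices to prove that the forgetful map sends $(\pi, \gamma)$ to the $(n-1)$-shifted Poisson structure on $L$ whose bivector is the composite
\[\bL_L \longrightarrow \bL_{L/X} \xrightarrow{\ \gamma^\sharp\ } \bT_L[1-n],\]
where $\bL_L \to \bL_{L/X}$ is the projection in the relative cotangent sequence; substituting $\gamma^\sharp = (\lambda^\sharp)^{-1}$ then gives the stated formula.

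To establish this I would unwind the forgetful map through relative polyvectors. By \cref{prop:coisotropicpolyvectors} the structure $(\pi, \gamma)$ is a Maurer--Cartan element of $\Pol(f, n)^{\geq 2}[n+1]$, and by property (2) of $\U$ (see \cref{sect:relpoisson}) this sits in the fiber sequence of graded Lie algebras
\[\Pol(L/X, n-1)[n] \longrightarrow \Pol(f, n)[n+1] \longrightarrow \Pol(X, n)[n+1],\]
whose rightmost term records $\pi$ and whose fiber records $\gamma$. Using \cref{prop:polyvectorsgrcdga} to identify the weight-$2$ parts with maps of (co)tangent complexes, I would read off the $\bP_n$-structure carried by $\cO(L)$ and compute its bivector $\bL_L \to \bT_L[1-n]$, verifying that it vanishes on $f^*\bL_X$ and hence factors as the projection $\bL_L \to \bL_{L/X}$ followed by $\gamma^\sharp$.

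The hard part will be precisely this last step. The relative polyvectors $\Pol(f, n)$ are assembled from the \emph{relative} tangent complex $\bT_{L/X}$ through $\Pol(L/X, n-1)$, whereas the source Poisson structure is detected in $\Pol(L, n-1)$, built from the \emph{full} tangent complex $\bT_L$; bridging this requires tracing how the $\bP_n$-algebra structure on $\cO(L)$ inside the ambient $\bP_{[n+1, n]}$-algebra is recovered from the relative datum $\gamma$ together with the Poisson structure $\pi$ on $X$, and a clean bookkeeping of the shifts and connecting maps in the two mutually inverse fiber sequences of the theorem. Once this weight-$2$ matching is in hand, the factorization above, and hence the corollary, follows formally.
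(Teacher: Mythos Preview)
The paper gives no proof of this corollary; it is stated immediately after the theorem as a direct consequence, with the forgetful map read off from the coisotropic-to-Poisson leg already recorded in \cref{sect:relpoisson} and the bivector formula read off from the theorem's assertion that the two displayed ladders of fiber sequences are mutually inverse (so that the left column gives $\gamma^\sharp=(\lambda^\sharp)^{-1}$). Your proposal recovers exactly this, and in fact supplies more argument than the paper does.

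Where you go further is in isolating, as the ``hard part'', the claim that the bivector of the $\bP_n$-structure on $\cO(L)$ is the composite $\bL_L\to\bL_{L/X}\xrightarrow{\gamma^\sharp}\bT_L[1-n]$. The paper does not address this step at all: it only says ``one has an induced morphism $\gamma^\sharp\colon\bL_{L/X}\to\bT_L[-n]$'' without explaining how, and then uses it freely. In the cited sources this identification is essentially packaged into the construction of $\gamma^\sharp$ from the weight-$2$ component of the Maurer--Cartan element in $\Pol(L/X,n-1)\subset\Pol(f,n)$, so the factorisation through $\bL_{L/X}$ is definitional once one unwinds where $\gamma$ lives. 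Your plan to trace this through the fiber sequence $\Pol(L/X,n-1)[n]\to\Pol(f,n)[n+1]\to\Pol(X,n)[n+1]$ and the weight-$2$ identification via \cref{prop:polyvectorsgrcdga} is the right way to make this honest; it is more work than the paper does, not a different route.

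One caution: your sentence ``verifying that it vanishes on $f^*\bL_X$'' is the correct target, but be careful that this factorisation is not a property of arbitrary $\bP_n$-brackets on $B$ in a $\bP_{[n+1,n]}$-pair---it reflects that the weight-$2$ part of the \emph{relative} coisotropic datum $\gamma$ lies in $\Gamma(L,\Sym^2(\bT_{L/X}[-n]))$ rather than in full polyvectors on $L$. Making this precise is exactly the bookkeeping you flag, and once done the corollary follows as you say.
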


\begin{example}
Suppose $f\colon L\rightarrow X$ is a smooth Lagrangian subscheme of a $0$-shifted symplectic scheme $X$. We have an exact sequence
\[0\longrightarrow \T_L\longrightarrow f^* \T_X\longrightarrow \N_{L/X}\longrightarrow 0\]
of vector bundles on $L$ which gives rise to an extension class $\H^1(L, \N^*_{L/X}\otimes \T_L)$. Since $L$ is Lagrangian, we have an isomorphism $\N^*_{L/X}\cong \T_L$. Therefore, we obtain an element
\[\pi_L\in \H^1(L, \T_L\otimes \T_L)\]
which can be shown to be symmetric. We see that $\pi_L$, the bivector underlying the $(-1)$-shifted Poisson structure on $L$, measures the obstruction for the normal bundle to $L$ being split.
\label{ex:lagrangiannormalbundle}
\end{example}

Let $\g$ be a complex equipped with a nondegenerate pairing of degree $n$. Then $\widehat{\Sym}(\g^*[-1])$ becomes a $\bP_{n+3}$-algebra. A cyclic $L_\infty$ structure on $\g$ can be encoded in a potential $h\in\widehat{\Sym}(\g^*[-1])[n+3]$ which is a Hamiltonian for the vector field defining the $L_\infty$ structure, see e.g. \cite[Section 3]{BL}.

Suppose $(X, \omega)$ is a smooth symplectic scheme. Kapranov \cite{Kap} has defined a cyclic $L_\infty$ structure on $\T_X[-1]$ with a pairing of degree $-2$ given by $\omega$ whose potential is an element of $\Gamma(X, \widehat{\Sym}^{\geq 3}(\T_X))$ of cohomological degree $1$.

\begin{conjecture}
Suppose $(X, \omega)$ is a smooth symplectic scheme. The $(-1)$-shifted Poisson structure on $X$ obtained as the image of $\omega$ under the forgetful map $\Pois(X, 0)\rightarrow \Pois(X, -1)$ gives the potential for the cyclic $L_\infty$ structure on $\T_X[-1]$.
\label{conj:atiyahbracket}
\end{conjecture}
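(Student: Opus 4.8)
The plan is to put both sides of the conjectured identity into a common frame attached to $\bT_X[-1]$ and then to compute the forgetful map $\Pois(X, 0)\to\Pois(X, -1)$ on $\omega$ directly. On the Poisson side, combining \cref{prop:poissonpolyvectors} and \cref{prop:polyvectorsgrcdga} at $n=-1$ gives $\Pol(X, -1)\cong \Gamma(X, \Sym(\bT_X))$ with its canonical (weight $-1$, degree $0$) Schouten bracket, so that $\Pois(X, -1)\cong \uMC\big(\Gamma(X, \Sym^{\geq 2}(\bT_X))\big)$. A Maurer--Cartan element $\tilde\pi = \sum_{m\geq 2}\tilde\pi_m$, with $\tilde\pi_m$ a degree-$1$ class in $\H^1(X, \Sym^m \bT_X)$, is precisely the datum of an $L_\infty$-structure on $\bT_X[-1]$ whose weight-$m$ part is the $(m-1)$-ary bracket, and the nondegenerate pairing $\omega$ makes this cyclic. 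Thus the content of the conjecture is that the image $\tilde\pi$ of $\omega$ under the forgetful map is exactly Kapranov's cyclic $L_\infty$-structure, i.e. that $\tilde\pi$ equals the Atiyah-class potential $h\in\Gamma(X, \widehat{\Sym}^{\geq 3}(\bT_X))$.

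First I would make the forgetful map geometric. The map $\Pois(X, 0)\to\Pois(X, -1)$ factors, through the unique coisotropic structure on $\id\colon X\to X$, via the graph construction, and for symplectic $X$ this graph is the Lagrangian diagonal $\Delta\colon X\to X\times\overline{X}$ into the product with the difference form $p_1^*\omega - p_2^*\omega$. Using the Poisson-morphism-versus-coisotropic-graph description together with the equivalence $\Cois^{nd}\cong\Lagr$, I would identify $\tilde\pi$ with the $(-1)$-shifted Poisson structure induced on the source of $\Delta$ by \cref{cor:lagrangianpoisson}. The advantage is that the induced bivector and its higher analogues can then be read off from the normal geometry of $\Delta$ exactly as in \cref{ex:lagrangiannormalbundle}: the weight-$2$ term is the $\omega$-symmetrization of the extension class of
\[
0 \longrightarrow \bT_X \xrightarrow{\ d\Delta\ } \bT_X \oplus \bT_X \longrightarrow \bT_X \longrightarrow 0 ,
\]
which splits, so $\tilde\pi_2 = 0$. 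This accounts for the passage from weights $\geq 2$ to weights $\geq 3$ and already matches the vanishing of the quadratic part of Kapranov's potential (equivalently, the vanishing of $\ell_1$ on $\bT_X[-1]$).

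The heart of the argument is the identification of the higher components. At weight $3$, the obstruction to splitting the sequence above compatibly with $\omega$ to second order is, after transport through $\omega^\sharp\colon\bT_X\xrightarrow{\sim}\bL_X$, the symmetrization of the Atiyah class $\mathrm{at}_{\bT_X}\in\H^1(X, \Omega^1_X\otimes\mathrm{End}(\bT_X))$, which is exactly Kapranov's binary bracket. For weights $m\geq 4$ I would run the obstruction-theoretic recursion of \cref{sect:globalization}: the Maurer--Cartan equation in weight $m$ expresses $d\tilde\pi_m$ as a sum of Schouten brackets of the lower $\tilde\pi_i$, and I would show by induction that this recursion reproduces, term by term, the recursion defining Kapranov's higher classes as iterated compositions of the Atiyah class along a torsion-free connection. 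The cyclic symmetry of the resulting potential is automatic from the symmetry of $\omega$, and the intertwining of involutions in \cref{prop:oppositepoisson} ensures that the factor $\overline{X}$ contributes the correct opposite sign.

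The main obstacle I anticipate is precisely this global, all-orders matching. There is at present no closed formula for $\forget_1^0$ on polyvector fields, so one must control the Poisson additivity equivalence of \cref{thm:poissonadditivity} in families over $X_{\dR}$ through formal localization; concretely, by \cref{prop:poissonlinearization} the nontriviality of $\forget_1^0$ is measured by the failure of the Poincar\'{e}--Birkhoff--Witt comparison $\U(\g)\cong\Sym(\g)$ for the tangent Lie algebra $\g=\bT_X[-1]$, and it is a nontrivial input --- that this failure is governed by the Atiyah class, in the spirit of the formality results underlying \cref{prop:poissonforget2} --- which must be upgraded from the formal to the global setting and shown to be independent of the chosen associator. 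Matching all signs and combinatorial factors, namely the symmetrizations and the normalizations relating a potential to its sequence of brackets, is where the real work lies, and is the reason the statement is left as a conjecture.
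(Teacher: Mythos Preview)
The statement you are attempting is labelled a \emph{Conjecture} in the paper, and the paper does not prove it. What the paper does provide is a single piece of evidence immediately following the conjecture: it argues that the underlying bivector of the resulting $(-1)$-shifted Poisson structure vanishes, by identifying the image of $\omega$ under $\Pois(X,0)\to\Pois(X,-1)$ with the image of the Lagrangian diagonal $X\to X\times\overline{X}$ under $\Lagr(\Delta,0)\to\Pois(X,-1)$ and then invoking \cref{ex:lagrangiannormalbundle} together with the splitting of the normal bundle to the diagonal. Your proposal recovers exactly this step, via the same route, and your weight-$2$ computation matches the paper's.

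Everything beyond weight $2$ in your proposal --- the identification of $\tilde\pi_3$ with the symmetrized Atiyah class, the inductive matching of the Maurer--Cartan recursion with Kapranov's recursion, and the control of $\forget_1^0$ through \cref{prop:poissonlinearization} and PBW-type obstructions --- goes strictly further than anything the paper establishes. These are plausible ingredients of a proof, and you are right to flag the all-orders matching and the associator-independence as the genuine obstacles; but as written this is a strategy, not a proof, and there is nothing in the paper to compare it against. In particular, the paper gives no indication that the higher components can be pinned down by the obstruction theory of \cref{sect:polyvectors} alone, so the inductive step you describe remains the open part of the conjecture.
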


For instance, we claim that the bivector underlying the $(-1)$-shited Poisson structure on $X$ is trivial. It can be seen as follows. The diagonal $X\rightarrow X\times X$ is Lagrangian and the image of $\omega$ under $\Pois(X, 0)\rightarrow \Pois(X, -1)$ coincides with the image of the Lagrangian structure on the diagonal under
\[\Lagr(X\rightarrow X\times X, 0)\rightarrow \Pois(X, -1).\]

The normal bundle to the diagonal embedding is split, so by \cref{ex:lagrangiannormalbundle} the bivector underlying the $(-1)$-shifted Poisson structure is trivial.

\section{Examples}

In the last lecture we give examples of shifted Poisson and shifted symplectic structures.

\subsection{Symplectic realizations}

Recall from \cref{cor:lagrangianpoisson} that given a morphism $f\colon L\rightarrow X$ of derived Artin stacks locally of finite presentation one has a forgetful map
\begin{equation}
\Lagr(f, n)\longrightarrow \Pois(L, n-1).
\label{eq:lagrangianpoisson}
\end{equation}

\begin{defn}
Let $L$ be a derived Artin stack equipped with an $(n-1)$-shifted Poisson structure. A \defterm{symplectic realization of $L$} is the data of an $n$-shifted Lagrangian $f\colon L\rightarrow X$ which lifts the $(n-1)$-shifted Poisson structure on $L$ along \eqref{eq:lagrangianpoisson}.
\end{defn}

The following statement is currently being investigated by Costello--Rozenblyum and Calaque--Vezzosi.

\begin{conjecture}
For a shifted Poisson stack $L$ there is a unique symplectic realization $L\rightarrow X$ for which $L\rightarrow X$ is a nil-isomorphism, i.e. an isomorphism on reduced stacks.
\label{conj:sympliealgd}
\end{conjecture}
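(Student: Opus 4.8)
The plan is to recast the conjecture as the derived analog of the symplectic Lie algebroid correspondence of \cref{prop:symplecticliealgd}. The decisive observation is that requiring $f\colon L\to X$ to be a nil-isomorphism forces $X$ to be a formal thickening of $L$, so that by the equivalence between formal moduli problems under $L$ and derived Lie algebroids over $L$ (in the sense of \cite{GR}) the datum of such an $X$ is the same as a Lie algebroid $\cL$ on $L$. First I would make this dictionary precise and functorial in families parametrized by $\Pois(L, n-1)$, identifying the underlying complex of $\cL$ with $\bT_{L/X}[1]$ by means of the fiber sequence $\bT_{L/X}\to\bT_L\to f^*\bT_X$ and retaining its anchor and bracket.

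Next I would determine what an $n$-shifted symplectic structure on $X$ together with an $n$-shifted Lagrangian structure on $f$ contributes to $\cL$. Combining the nondegeneracy equivalences $\omega^\sharp\colon f^*\bT_X\xrightarrow{\sim} f^*\bL_X[n]$ and $\lambda^\sharp\colon\bT_{L/X}\xrightarrow{\sim}\bL_L[n-1]$ should produce a self-duality $\cL\cong\bL_L[n]$ intertwining the anchor with the bracket; this is the homotopy-coherent incarnation of the two equations \eqref{eq:sympliealgd1} and \eqref{eq:sympliealgd2} that define a symplectic Lie algebroid. The output of this step is an equivalence between the space of nil-isomorphism symplectic realizations of $L$ and a space of ``$n$-shifted symplectic Lie algebroid structures'' on $L$, built on a shifted cotangent module with a derived Koszul bracket twisted by a bivector.

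The heart of the argument is to identify the latter space with $\Pois(L, n-1)$ compatibly with the forgetful map \eqref{eq:lagrangianpoisson}. Concretely, I would verify that the symplectic Lie algebroid attached to $(\omega,\lambda)$ carries precisely the $(n-1)$-shifted Poisson structure whose bivector is the composite $\bL_L\to\bL_{L/X}\xrightarrow{(\lambda^\sharp)^{-1}}\bT_L[1-n]$ of \cref{cor:lagrangianpoisson}, so that the anchor of $\cL$ is $\pi^\sharp$ and its bracket is the Koszul bracket of $\pi$. Existence then amounts to integrating this twisted cotangent Lie algebroid to a formal moduli problem $L\to X$ and assembling the symplectic form from the canonical pairing between $\cL$ and its dual; when the Poisson structure is zero this recovers the zero section $L\to\T^*[n]L$ into the canonical $n$-shifted cotangent stack, which is Lagrangian by \cref{ex:conormal} and a nil-isomorphism in the relevant range of $n$, a reassuring base case. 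Uniqueness is then built in, since a fixed $\pi$ determines the symplectic Lie algebroid up to a contractible space of choices and the passage back to $X$ is an equivalence by the formal moduli correspondence; the fiber of the realization map over each Poisson structure is therefore contractible.

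The main obstacle I expect is the coherent construction of the closed nondegenerate $n$-shifted symplectic form on $X$ and the verification of the Lagrangian condition at the level of spaces rather than of underlying complexes: one must control all the higher homotopies relating closedness of $\omega$ to the homotopy Jacobi identity for $\pi$, which is exactly the difficulty already present in \cref{thm:nondegeneratepoisson}. A realistic strategy is to run the weight-by-weight obstruction-theoretic induction of that theorem in the present relative situation, proving at each stage that the comparison square of obstruction spaces is Cartesian. Establishing the linear-algebra base case and matching the obstruction classes on the Lie algebroid side with those on the symplectic side is where the genuinely new work lies, and it is precisely this comparison that is currently being pursued by Costello--Rozenblyum and Calaque--Vezzosi.
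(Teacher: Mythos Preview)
The statement you are attempting to prove is labeled a \emph{Conjecture} in the paper, and the paper gives no proof of it whatsoever; it only remarks that the question is currently being investigated by Costello--Rozenblyum and Calaque--Vezzosi. There is therefore no ``paper's own proof'' to compare your proposal against.

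As for the proposal itself, it is a plausible strategic outline rather than a proof. You correctly identify the intended analogy with \cref{prop:symplecticliealgd} and the role of the Lie algebroid / formal moduli correspondence of \cite{GR}, and your description of how the nondegeneracy data $(\omega^\sharp,\lambda^\sharp)$ should translate into a shifted symplectic Lie algebroid is reasonable. But the substantive steps are all deferred: you do not construct the closed $n$-shifted symplectic form on the formal thickening $X$, you do not verify the Lagrangian condition coherently, and you do not carry out the obstruction-theoretic induction you invoke. You yourself flag this as ``where the genuinely new work lies'' and note it is the subject of ongoing research. So what you have written is a sketch of a program, consistent with the paper's own framing of the problem as open, but it does not constitute a proof and should not be presented as one.
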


\begin{remark}
In fact, generalizing \cref{prop:symplecticliealgd} one can \emph{define} the space $\Pois(X, n)$ in terms of such formal symplectic realizations. Such a definition allows one to reduce questions about shifted Poisson structures to questions about shifted symplectic structures which are significantly easier to work with. For instance, the coisotropic intersection theorem and the Poisson version of the AKSZ construction (\cite[Theorem 2.5]{PTVV}) follow almost immediately from the corresponding statements for shifted symplectic structures.
\end{remark}

To motivate this notion, let us recall the following definition from \cite{CDW}.

\begin{defn}
Let $\cG\rightrightarrows X$ be a smooth algebraic groupoid. It is a \defterm{symplectic groupoid} if one is given a multiplicative symplectic structure $\omega$ on $\cG$ for which the unit section $X\rightarrow \cG$ is Lagrangian.
\end{defn}

\begin{lm}
Suppose $\cG\rightrightarrows X$ is a symplectic groupoid. Then its Lie algebroid $\cL$ is a symplectic Lie algebroid.
\end{lm}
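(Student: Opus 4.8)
The plan is to recover the classical identification of the Lie algebroid of a symplectic groupoid with its cotangent algebroid. Write $s,t\colon\cG\to X$ for the source and target and $u\colon X\to\cG$ for the unit section. Recall that the Lie algebroid is $\cL = A\cG = \ker(\d s)|_{u(X)}$, the $s$-vertical tangent bundle along the units, with anchor $a = \d t|_\cL\colon\cL\to\T_X$ and bracket induced by identifying sections of $\cL$ with the corresponding invariant vector fields on $\cG$. I would define the comparison map $\alpha\colon\cL\to\T^*_X$ by
\[\langle\alpha(l),w\rangle = \omega_{u(x)}(l,\d u(w)),\qquad l\in\cL_x,\ w\in\T_xX.\]

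First I would check that $\alpha$ is an isomorphism, working pointwise at a unit $u(x)$. Since the unit section is Lagrangian we have $\dim\cG = 2\dim X$, so both $\ker(\d s)_{u(x)}$ and $\d u(\T_xX)$ have dimension $\dim X$; as $\d s\circ\d u = \id$ forces $\ker(\d s)_{u(x)}\cap\d u(\T_xX) = 0$, a dimension count gives $\T_{u(x)}\cG = \cL_x\oplus\d u(\T_xX)$. Because $\d u(\T_xX)$ is Lagrangian, the pairing induced by $\omega$ between it and the complementary subspace $\cL_x$ is perfect, and hence $\alpha$ is an isomorphism.

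Next I would verify the antisymmetry axiom \eqref{eq:sympliealgd1}. The input from multiplicativity is that the source and target fibers are $\omega$-orthogonal, $\omega(\ker\d s,\ker\d t) = 0$ along the units; this is the infinitesimal form of $m^*\omega = \mathrm{pr}_1^*\omega + \mathrm{pr}_2^*\omega$ and amounts to the Poisson-commutativity of $s^*\cO_X$ and $t^*\cO_X$. For $l\in\cL_x$ one has $\d t(\kappa) = 0$ where $\kappa = l - \d u(a(l))$, using $\d t\circ\d u = \id$ and $a(l) = \d t(l)$, so $\kappa\in\ker(\d t)_{u(x)}$ and
\[\iota_{a(l)}\alpha(l) = \omega(l,\d u(a(l))) = \omega(l,l-\kappa) = -\omega(l,\kappa) = 0,\]
the last step by $\omega$-orthogonality of $\ker\d s$ and $\ker\d t$.

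The main obstacle is the Jacobi-type axiom \eqref{eq:sympliealgd2}, which encodes the compatibility of $\alpha$ with the algebroid bracket. I would prove it by extending $l_1,l_2$ to invariant vector fields $\widetilde{l}_1,\widetilde{l}_2$, whose vector-field bracket restricts to $[l_1,l_2]$ on the units, and reformulating both sides in terms of the globally defined contractions $\iota_{\widetilde{l}_i}\omega$. Closedness $\d\omega = 0$ then supplies the term $\iota_{a(l_2)}\ddr\alpha(l_1)$ through the Cartan identity $L_X\omega = \d\iota_X\omega$, while multiplicativity controls how $\iota_{\widetilde{l}_i}\omega$ transforms so that the two sides agree after restriction to $u(X)$. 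The delicate point is the bookkeeping relating the invariant extensions to the pointwise definition of $\alpha$: this is exactly the passage from a multiplicative closed $2$-form on $\cG$ to the associated infinitesimally multiplicative form on $A\cG$, and \eqref{eq:sympliealgd2} is its closed-form expression.
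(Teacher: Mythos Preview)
The paper does not give a proof of this lemma; it is stated as a classical fact (in the spirit of \cite{CDW}) and used only to motivate the notion of symplectic realization. So there is no argument in the paper to compare against, and your write-up is essentially supplying the standard proof from the literature.

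Your construction of $\alpha$ and the argument that it is an isomorphism are correct: the Lagrangian condition on the unit section forces $\dim\cG = 2\dim X$, and the splitting $\T_{u(x)}\cG = \cL_x\oplus \d u(\T_xX)$ together with the Lagrangian property of $\d u(\T_xX)$ gives the perfect pairing. Your verification of \eqref{eq:sympliealgd1} is also fine once one accepts the $\omega$-orthogonality of $\ker\d s$ and $\ker\d t$ along the units; this does follow from multiplicativity, though it would be better to write out the one-line derivation rather than only naming it.

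The treatment of \eqref{eq:sympliealgd2}, however, is only a sketch. You correctly identify the ingredients --- left-invariant extensions, $\d\omega=0$ via Cartan's formula, and multiplicativity controlling $\iota_{\widetilde{l}_i}\omega$ --- but the actual identity is not derived. The point you call ``delicate'' really is the whole content: one needs something like $\iota_{\widetilde{l}}\omega = t^*\alpha(l)$ for the left-invariant extension $\widetilde{l}$ (this is the infinitesimal moment-map condition encoded in multiplicativity), and only with that in hand does the Cartan calculus on $\cG$ pull back to the desired relation on $X$. As written, this step is asserted rather than proved, so the argument is incomplete at the crucial point.
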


In particular, given a symplectic groupoid $\cG$ over $X$ one has an induced Poisson structure on $X$ by \cref{prop:symplecticliealgd}.

\begin{prop}
Let $\cG\rightrightarrows X$ be a smooth algebraic groupoid over a smooth scheme $X$. Then the space $\Lagr(X\rightarrow [X/\cG], 1)$ is equivalent to the set of symplectic structures $\omega$ on $\cG$ endowing it with the structure of a symplectic groupoid.
\end{prop}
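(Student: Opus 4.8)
The plan is to compute $\Lagr(X\to[X/\cG],1)$ by descent along the nerve of the groupoid, reducing everything to classical differential-geometric data on $\cG$. Write $N_\bullet$ for the nerve, so that $N_0=X$, $N_1=\cG$, $N_p=\cG\times_X\dots\times_X\cG$ and $[X/\cG]\cong\colim N_\bullet$; then $\DR([X/\cG])$ is the totalization of the cosimplicial graded mixed commutative dg algebra $p\mapsto\DR(N_p)$. First I would unwind $\cA^{2,cl}([X/\cG],1)$ through this totalization. Counting weights against simplicial degree, a $1$-shifted closed two-form is recorded by an ordinary two-form $\omega$ on $N_1=\cG$ together with a three-form $\phi$ on $N_0=X$, subject to the multiplicativity relation $\mathrm{pr}_1^*\omega-m^*\omega+\mathrm{pr}_2^*\omega=0$ on $N_2$, the relative closedness $\ddr\omega=t^*\phi-s^*\phi$, and $\ddr\phi=0$. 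Because each $N_p$ is a smooth scheme, with forms concentrated in cohomological degree $0$, the closedness tower and the higher simplicial coherences leave no room for nontrivial homotopies and strictify to these equations; in particular the space is discrete.

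Next I would feed in the isotropic structure on the atlas $f\colon X\to[X/\cG]$. The restriction $f^*\omega\in\cA^{2,cl}(X,1)$ is computed along the unit $e\colon X\to\cG$, and multiplicativity forces $e^*\omega=0$, so the underlying two-form of $f^*\omega$ vanishes; a null-homotopy of $f^*\omega$ then amounts to trivializing its surviving weight-three component, i.e.\ to the vanishing of the three-form twist $\phi$ on $X$. Thus the isotropic datum exists, and is then unique, precisely when $\cG$ carries an honest (untwisted) multiplicative closed two-form. I would conclude that the forgetful map $\Lagr(X\to[X/\cG],1)\to\Symp([X/\cG],1)$ is a monomorphism of spaces whose image consists of such forms.

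The heart of the argument is the nondegeneracy comparison, for which I would use the cotangent complex of a quotient stack as in \cref{ex:BG}. Along the atlas one has $f^*\bL_{[X/\cG]}\simeq[\,\T^*_X\to\cL^*\,]$ and $\bL_{X/[X/\cG]}\simeq\cL^*$, hence $\bT_{X/[X/\cG]}\simeq\cL$, the Lie algebroid of $\cG$ placed in degree $0$, while $\bL_X=\T^*_X$. The Lagrangian nondegeneracy condition is that $\lambda^\sharp\colon\bT_{X/[X/\cG]}\to\bL_X[\,n-1\,]=\bL_X$ be an equivalence, i.e.\ that $\omega$ induce an isomorphism $\cL\xrightarrow{\sim}\T^*_X$. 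This is exactly the data making $\cL$ a symplectic Lie algebroid in the sense of \cref{prop:symplecticliealgd}, the infinitesimal form of the Lagrangian condition on the unit section, and it recovers the symplectic Lie algebroid of the preceding lemma. Together with nondegeneracy of $\omega$ along $\cG$ it says precisely that $\cG$ is a symplectic groupoid; the verification that stacky nondegeneracy restricts on the atlas to the pair (nondegeneracy of $\omega$, isomorphism $\cL\cong\T^*_X$) is a linear-algebra computation with the two-term complexes $[\cL\to\T_X]$ and $[\T^*_X\to\cL^*]$. Combining the three steps gives a natural identification of $\Lagr(X\to[X/\cG],1)$ with the set of symplectic structures making $\cG$ a symplectic groupoid.

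The main obstacle is the strictification underlying the first two steps: one must show that the coherent homotopies packaging closedness and multiplicativity over the higher $N_p$, and the choice of null-homotopy trivializing the three-form twist, carry no information beyond the classical equations, so that $\Lagr(X\to[X/\cG],1)$ is genuinely a set rather than a higher groupoid. This is ultimately a connectivity statement — the weight-$p$ part of $\DR(N_\bullet)$ is bounded below in cohomological degree in terms of $p$, exactly as in \cref{ex:poissonscheme} and \cref{ex:coisotropicscheme} — but organizing the estimate along the totalization so as to kill simultaneously the higher simplicial coherences and the would-be twist is where the real care is needed.
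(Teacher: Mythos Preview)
The paper states this proposition without proof, so there is no argument in the text to compare against. Your descent-along-the-nerve strategy is the natural one and is essentially how such statements are proved in the literature (e.g.\ in the author's own \cite{Sa3}); the identification of $1$-shifted closed two-forms on $[X/\cG]$ with quasi-symplectic data $(\omega,\phi)$ and the nondegeneracy analysis via the two-term cotangent complex are both correct.

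There is, however, a genuine slip in the isotropic step. A null-homotopy of $f^*(\omega,\phi)=\phi\in\cA^{2,cl}(X,1)$ is not the \emph{vanishing} of $\phi$ but the choice of a primitive $\alpha\in\Omega^2_X$ with $\ddr\alpha=\phi$; the datum $(\omega,\phi,\alpha)$ is then gauge-equivalent (via the degree-$2$ part of $|\fib^{\geq 2}|$) to $(\omega-\partial\alpha,0,0)$, which is how one lands on honest multiplicative closed two-forms. In particular your claim that the forgetful map $\Lagr(X\to[X/\cG],1)\to\Symp([X/\cG],1)$ is a monomorphism is false in general: the fibre over a fixed $1$-shifted symplectic class is a torsor under $\Omega^{2,cl}_X$, and correspondingly $\pi_1\bigl(\Symp([X/\cG],1)\bigr)$ is the group of $\cG$-invariant closed two-forms on $X$, which need not vanish, while $\Isot(f,1)$ is genuinely discrete. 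This does not damage the final conclusion, because the correct route is to compute $\Isot(f,1)=\Map\bigl(k(2)[-1],\fib(f^*)[2]\bigr)$ directly rather than as a fibre over $\Symp$: the degree-count you sketch in your last paragraph shows $\H^{\leq 2}(|\fib^{\geq 2}|)=0$ and $\H^3(|\fib^{\geq 2}|)\cong\{\omega\in\Omega^2_\cG:\partial\omega=0,\ \ddr\omega=0\}$, after which the nondegeneracy comparison goes through exactly as you describe.
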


A symplectic groupoid $\cG\rightrightarrows X$ lifting a given Poisson structure on $X$ gives an example of a symplectic realization of $X$ in the sense of \cite{We}. Thus, the forgetful map
\[\Lagr(X\rightarrow [X/\cG], 1)\longrightarrow \Pois(X, 0)\]
gives a different interpretation of the underlying Poisson structure on $X$.

Let us also explain some geometric consequences one can extract from a symplectic realization. Suppose that $L\rightarrow X$ is a $1$-shifted Lagrangian such that $L$ is a smooth Poisson manifold. The Lagrangian structure gives an isomorphism
\[\lambda^\sharp\colon \T_{L/X}\xrightarrow{\sim} \T^*_L.\]

By \cref{cor:lagrangianpoisson} the underlying Poisson structure is given by the composite
\[\T^*_L\xrightarrow{(\lambda^\sharp)^{-1}} \T_{L/X}\rightarrow \T_L.\] The image of the Poisson bivector under $\T^*_L\rightarrow \T_L$ gives a foliation of $L$ by the symplectic leaves of the Poisson structure which we see is the same as the tangent foliation $\T_{L/X}\rightarrow \T_L$. More generally, given a symplectic realization $L\rightarrow X$ of an $n$-shifted Poisson structure on $L$ we can think of it as follows:
\begin{itemize}
\item $X$ is the moduli space of symplectic leaves of the $n$-shifted Poisson structure on $L$.

\item The fibers of $L\rightarrow X$ are the symplectic leaves.
\end{itemize}

\begin{example}
Suppose $\g$ is a finite-dimensional Lie algebra of an algebraic group $G$ and consider the Poisson scheme $\g^*$ equipped with Kirillov--Kostant Poisson structure. It is well-known that the symplectic leaves of $\g^*$ are given by coadjoint orbits. In fact, it admits a symplectic realization
\[\g^*\longrightarrow [\g^*/G].\]
This 1-shifted Lagrangian can be constructed as follows. We can identify $[\g^*/G]\cong \T^*[1](\B G)$ which endows it with a 1-shifted symplectic structure. The above Lagrangian can be identified with the 1-shifted conormal bundle of the basepoint $\pt\rightarrow \B G$ which carries a 1-shifted Lagrangian structure as in \cref{ex:conormal}.

The corresponding symplectic groupoid can be extracted as
\[\g^*\times_{[\g^*/G]} \g^*\cong \T^* G\rightrightarrows \g^*\]
which coincides with the standard symplectic groupoid integrating the Poisson structure on $\g^*$.
\end{example}

\subsection{Classifying stacks}

Given a group $G$ and a polyvector $p\in \wedge^k(\g)$ at the unit of $G$, we can extend it using left or right translations to elements we denote by $p^\L, p^\R\in\wedge^k(\T_G)$.

\begin{defn}
Let $G$ be an affine algebraic group. A \defterm{quasi-Poisson structure} on $G$ is the data of a bivector $\pi\in\wedge^2 \T_G$, a trivector $\phi\in\wedge^3(\g)$ satisfying
\begin{align*}
\pi(g_1 g_2) &= L_{g_1, *} \pi(g_2) + R_{g_2, *} \pi(g_1) \\
\frac{1}{2}[\pi, \pi] + \phi^\L - \phi^R &= 0 \\
[\pi, \phi^\L] &= 0.
\end{align*}
\label{def:quasipoisson}
\end{defn}

Given a quasi-Poisson structure $(\pi, \phi)$ on $G$ and a bivector $\lambda\in\wedge^2(\g)$ we can construct a new quasi-Poisson structure on $G$ via a procedure called twisting. We have the following interpretation of quasi-Poisson structures in terms of shifted Poisson structures (see \cite[Proposition 2.5]{Sa3} and \cite[Theorem 2.8]{Sa3} for details).

\begin{prop}
Let $G$ be an affine algebraic group.
\begin{enumerate}
\item The space $\Pois(\B G, 2)$ is equivalent to the set $\Sym^2(\g)^G$.

\item The space $\Pois(\B G, 1)$ is equivalent to the groupoid of quasi-Poisson structures on $G$ with morphisms given by twists.
\end{enumerate}
\label{prop:quasipoissonshiftedpoisson}
\end{prop}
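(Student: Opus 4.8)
The plan is to compute both spaces directly from the polyvector description. By \cref{prop:poissonpolyvectors} and the Maurer--Cartan model recalled just before it, for any $n$ we have
\[\Pois(\B G, n)\cong \uMC\!\left(\Pol(\B G, n)[n+1]^{\geq 2}\right),\]
so everything is controlled by the graded dg Lie algebra $\Pol(\B G, n)$. By \cref{prop:polyvectorsgrcdga} together with \cref{ex:BG}, which gives $\bT_{\B G}\cong \g[1]$ in $\Rep G$, we have $\bT_{\B G}[-n-1]\cong \g[-n]$ and hence an equivalence of graded complexes
\[\Pol(\B G, n)\cong \Gamma(\B G, \Sym(\g[-n]))\cong \R\Gamma(G, \Sym(\g[-n])),\]
where $\R\Gamma(G, -)$ denotes rational group cohomology, computed by a Chevalley--Eilenberg-type cochain complex. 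Concretely, the weight-$p$ part is $\R\Gamma(G, \Sym^p\g)$ for $n$ even and $\R\Gamma(G, \wedge^p\g)$ for $n$ odd, graded so that the $i$-th cochains $\C^i(G, -)$ of the weight-$p$ piece sit in cohomological degree $pn+i$. The whole computation is then bookkeeping in this bicomplex, together with a dictionary translating its cochains into polyvector fields on $G$.

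For part (1) we take $n=2$, so $\C^i(G, \Sym^p\g)$ sits in degree $2p+i$. A Maurer--Cartan element of $\Pol(\B G, 2)[3]^{\geq 2}$ lives in cohomological degree $1$, i.e. degree $4$ of $\Pol(\B G, 2)$, whose weight-$p$ part is $\C^{4-2p}(G, \Sym^p\g)$. This vanishes for $p\geq 3$ and equals $\C^0(G,\Sym^2\g)=\Sym^2\g$ for $p=2$, so an MC element is a single $x_2\in\Sym^2\g$. The weight-$2$ component of the MC equation reads $dx_2=0$, i.e. $x_2$ is a $0$-cocycle, which is exactly the condition $x_2\in\Sym^2(\g)^G$; the weight-$3$ component $\tfrac12[x_2,x_2]$ lands in $\C^{-1}(G,\Sym^3\g)=0$ and imposes nothing. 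Gauge parameters would live in degree $0$ of $\Pol(\B G, 2)[3]^{\geq 2}$, i.e. in $\C^{3-2p}(G,\Sym^p\g)=0$ for all $p\geq 2$, and lower degrees vanish too, so $\uMC$ is $0$-truncated. This gives $\Pois(\B G, 2)\cong \Sym^2(\g)^G$ as a set.

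For part (2) we take $n=1$, so $\C^i(G, \wedge^p\g)$ sits in degree $p+i$. An MC element lives in degree $3$ of $\Pol(\B G, 1)$, with weight-$p$ part $\C^{3-p}(G, \wedge^p\g)$; this is $\C^1(G, \wedge^2\g)$ for $p=2$, is $\wedge^3\g=\C^0(G,\wedge^3\g)$ for $p=3$, and vanishes for $p\geq 4$. Thus an MC element is a pair $(x_2, x_3)$ with $x_2$ a $1$-cochain $G\to\wedge^2\g$ and $x_3\in\wedge^3\g$. Under the standard identification of $1$-cochains with bivector fields on $G$ ($c\colon G\to\wedge^2\g$ corresponds to $\pi(g)=R_{g,*}c(g)$) the datum $x_2$ is a bivector $\pi$ and $x_3$ is the trivector $\phi$. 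Decomposing $dx+\tfrac12[x,x]=0$ by weight yields exactly three nontrivial equations: in weight $2$, $dx_2=0$, the group-cohomology cocycle condition, which translates into the multiplicativity $\pi(g_1g_2)=L_{g_1,*}\pi(g_2)+R_{g_2,*}\pi(g_1)$; in weight $3$, $dx_3+\tfrac12[x_2,x_2]=0$, which under the dictionary $R_{g,*}(d\phi)(g)=\phi^\L-\phi^\R$ becomes $\tfrac12[\pi,\pi]+\phi^\L-\phi^\R=0$; and in weight $4$, $[x_2,x_3]=0$, which becomes $[\pi,\phi^\L]=0$. The weight-$5$ term $\tfrac12[x_3,x_3]$ lands in $\C^{-1}(G,\wedge^5\g)=0$ and imposes no condition, matching the fact that \cref{def:quasipoisson} has only three equations. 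Gauge parameters live in degree $0$, whose only nonzero weight-$p$ part is $\C^0(G,\wedge^2\g)=\wedge^2\g$ at $p=2$; these are precisely the twists $\lambda\in\wedge^2\g$, and the action of $\exp$ of such a parameter on an MC element reproduces the twisting operation. Since $\Pol(\B G, 1)[2]^{\geq 2}$ is concentrated in nonnegative degrees, $\uMC$ is $1$-truncated, i.e. a groupoid, with objects the quasi-Poisson structures and morphisms the twists.

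The routine parts are the degree counts, which isolate the surviving cochains and show that the two spaces are $0$- and $1$-truncated respectively. The substantive step, and the main obstacle, is the dictionary used in the last paragraph: one must identify the Chevalley--Eilenberg differential and the Schouten-type Lie bracket on $\R\Gamma(G, \wedge^\bullet\g)$ with the geometric operations $d(-)\leftrightarrow \phi^\L-\phi^\R$ and $[-,-]\leftrightarrow [\pi,\pi],\ [\pi,\phi^\L]$ on multiplicative polyvector fields, and check that the gauge action matches twisting on the nose. This compatibility of the cochain-level operations with the polyvector operations on $G$ is exactly the content established in \cite{Sa3}, to which one appeals for the precise matching of signs and left/right conventions.
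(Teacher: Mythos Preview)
Your proposal is correct and follows essentially the same approach as the paper's own argument: identify $\Pol(\B G, n)\cong \C^\bullet(G,\Sym(\g[-n]))$ via \cref{prop:polyvectorsgrcdga} and \cref{ex:BG}, invoke \cref{prop:poissonpolyvectors} to pass to Maurer--Cartan elements, and then carry out the degree count to isolate the surviving cochains and equations. You actually supply more detail than the paper (the truncation argument and the gauge/twist identification), and you correctly flag that the precise matching of the cochain differential and Schouten bracket with the multiplicative polyvector operations on $G$ is the substantive step handled in \cite{Sa3}. One small terminological slip: the relevant cochain complex for an affine algebraic group is the Hochschild-type group cochain complex $\C^\bullet(G,-)$ rather than a Chevalley--Eilenberg complex, but your computations use the correct object throughout.
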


Let us briefly explain how one computes the spaces $\Pois(\B G, n)$. Recall from \cref{ex:BG} that under the identification $\QCoh(\B G)\cong \Rep G$ we have $\bL_{\B G}\cong \g^*[-1]$. Moreover, the functor of global sections $\Gamma(\B G, -)$ corresponds to taking the group cochains $\C^\bullet(G, -)$ valued in a representation. The algebra of polyvectors is then quasi-isomorphic to
\[\Pol(\B G, n)\cong \C^\bullet(G, \Sym(\g[-n])).\]

By \cref{prop:poissonpolyvectors} we can compute $n$-shifted Poisson structures on $\B G$ in terms of Maurer--Cartan elements of weight $\geq 2$ in $\Pol(\B G, n)[n+1]$. Observe that the weight $p$ part of $\Pol(\B G, n)$ is concentrated in cohomological degrees $[pn, \infty)$ for $n\geq 0$. We get the following cases:
\begin{itemize}
\item For $n > 2$ all Maurer--Cartan elements are trivial for degree reasons since in this case $pn - n - 1 > 1$ for $p\geq 2$.

\item For $n = 2$ a Maurer--Cartan element has to be concentrated in weight $2$, i.e. it is given by an element $c\in\C^0(G, \Sym^2(\g))$. The Maurer--Cartan equation forces it to be a cocycle, i.e. it has to be $G$-invariant.

\item Finally, for $n=1$ a Maurer--Cartan element is given by a pair of elements $\pi\in\C^1(G, \wedge^2(\g))$ and $\phi\in\C^0(G, \wedge^3(\g))$. The Maurer--Cartan equation boils down to the quasi-Poisson relations in \cref{def:quasipoisson}.
\end{itemize}

\begin{defn}
Let $G$ be an affine algebraic group. A \defterm{Poisson-Lie structure} on $G$ is the same as a multiplicative Poisson structure $\pi$ on $G$. Equivalently, it is a quasi-Poisson structure $(\pi, \phi=0)$.
\end{defn}

The Lie algebra $\g$ of a Poisson-Lie group $G$ inherits a Lie bialgebra structure. Let us denote by $\hG$ the corresponding formal group. Then a Poisson-Lie structure on $\hG$ is the same as a Lie bialgebra structure on $\g$. The following statement is shown in \cite[Corollary 2.10]{Sa3}.

\begin{prop}
Let $G$ be an affine algebraic group.
\begin{enumerate}
\item The space $\Pois(\pt\rightarrow \B G, 1)$ is equivalent to the set of Poisson-Lie structures on $G$.

\item The space $\Pois(\pt\rightarrow \B\hG, 1)$ is equivalent to the set of Lie bialgebra structures on $\g$.
\end{enumerate}
\end{prop}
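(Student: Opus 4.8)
The plan is to reduce the computation of $\Pois(\pt\to\B G, 1)$ to a coisotropic computation, and then to extract the answer by unwinding a Maurer--Cartan problem for relative polyvectors. First I would apply the fiber square relating Poisson morphisms to coisotropic structures on the graph. The graph of $f\colon \pt\to\B G$ is the map $\pt\to\pt\times\B G\cong\B G$, which under this identification is again $f$. Since $\Pois(\pt, 1)$ is contractible (the polyvectors on a point are trivial, as in \cref{ex:poissonscheme}) and the involution $\opp$ is an equivalence, the right vertical arrow of that fiber square is an equivalence; pulling back along it yields $\Pois(\pt\to\B G, 1)\simeq\Cois(\pt\to\B G, 1)$.

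Next I would compute $\Cois(\pt\to\B G, 1)$ using \cref{prop:coisotropicpolyvectors}, i.e. as $\Map_{\ialg_{\Lie}^{gr}}(k(2)[-1], \Pol(\pt\to\B G, 1)[2])$. Using the fiber sequence of Lie algebras coming from property (2) of the forgetful functor $\U$ of \cref{sect:relpoisson}, together with \cref{prop:polyvectorsgrcdga} and \cref{ex:BG} (so that $\bT_{\B G}\cong\g[1]$ and $\bT_{\pt/\B G}\cong\g$), the relative polyvectors $\Pol(\pt\to\B G, 1)$ sit in a fiber sequence with $\Pol(\B G, 1)\cong\C^\bullet(G, \Sym(\g[-1]))$ and $\Pol(\pt/\B G, 0)\cong\Sym(\g[-1])$. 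The crucial identification is that the map $\Pol(\B G, 1)\to\Pol(\pt/\B G, 0)$ is, up to the canonical isomorphism on $\Sym(\g[-1])$, the augmentation projecting group cochains onto their $0$-cochain part.

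I would then unwind the Maurer--Cartan equation weight by weight. By \cref{prop:quasipoissonshiftedpoisson} the image in $\Pol(\B G,1)$ of a Maurer--Cartan element is a quasi-Poisson structure $(\pi, \phi)$ with $\pi\in\C^1(G, \wedge^2\g)$ and $\phi\in\C^0(G, \wedge^3\g)$. Since the weight-$p$ part of $\Sym(\g[-1])$ is concentrated in cohomological degree $p$, passing to the fiber over the augmentation has two effects: in weight $3$ the relevant map $\wedge^3\g\to\wedge^3\g$ is an isomorphism, forcing the trivector to vanish, $\phi=0$; and in weight $2$ the would-be gauge parameter $\lambda\in\wedge^2\g$ implementing a twist is killed, because the corresponding relative direction $\C^0(G, \wedge^2\g)\to\wedge^2\g$ is again an isomorphism. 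The multiplicative bivector $\pi$ survives freely, and with $\phi=0$ the remaining Maurer--Cartan relations are exactly the Poisson-Lie relations. This rigidifies the groupoid $\Pois(\B G, 1)$ to the discrete set of Poisson-Lie structures on $G$, proving (1); the same degree count shows the higher homotopy of $\Cois(\pt\to\B G, 1)$ vanishes.

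For (2) I would run the identical argument for the formal group $\hG$, where $\Gamma(\B\hG, -)$ is computed by Chevalley--Eilenberg Lie algebra cochains $\C^\bullet(\g, -)$ in place of group cochains. The same fiber analysis forces $\phi=0$ and annihilates the twists, while the surviving multiplicative bivector now lives in $\C^1(\g, \wedge^2\g)\cong\Hom(\g, \wedge^2\g)$ and is precisely a cobracket $\delta$; the remaining Maurer--Cartan relations become the co-Leibniz and co-Jacobi identities, i.e. a Lie bialgebra structure on $\g$. I expect the main obstacle to be the precise identification of the restriction map $\Pol(\B G, 1)\to\Pol(\pt/\B G, 0)$ with the augmentation and the verification that its fiber behaves as claimed in weights $2$ and $3$ — this single computation is what simultaneously forces $\phi=0$ and collapses the twist groupoid into a set — together with the bookkeeping matching the surviving Maurer--Cartan data with the classical Poisson-Lie (respectively Lie bialgebra) axioms.
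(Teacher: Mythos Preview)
Your proposal is correct and lands on the same key computation as the paper's sketch: the restriction $\Pol(\B G,1)\cong\C^\bullet(G,\Sym(\g[-1]))\to\C^0(G,\Sym(\g[-1]))$ is the augmentation, so the surviving Maurer--Cartan data are quasi-Poisson structures with $\phi=0$, i.e.\ Poisson--Lie structures.

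The paper's argument is more informal: it invokes the classical heuristic that a point inclusion into a Poisson manifold is Poisson iff the structure vanishes there, and then reads off $\phi=0$ from the augmentation. It refers to \cite{Sa3} for the details and does not address why the twist groupoid collapses to a set. Your route---reducing to $\Cois(\pt\to\B G,1)$ via the graph fiber square and then analyzing the relative polyvector fiber sequence weight by weight---is more systematic and makes explicit the step the paper omits: the weight-$2$ piece of the fiber kills the gauge parameters $\lambda\in\wedge^2\g$, so the result is genuinely discrete. Both approaches are the same idea, but yours is a rigorous unpacking of the paper's one-line heuristic.
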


These statements essentially follow from the description of the space of 1-shifted Poisson structures on $\B G$ given by \cref{prop:quasipoissonshiftedpoisson}. Indeed, recall that for a Poisson manifold $X$ the inclusion of a point $\pt\rightarrow X$ is Poisson iff the Poisson structure vanishes at the point. Thus, $\Pois(\pt\rightarrow \B G, 1)$ consists of 1-shifted Poisson structures on $\B G$ which vanish at the basepoint. The restriction to the basepoint on the level of polyvector fields is given by
\[\Pol(\B G, 1)\cong \C^\bullet(G, \Sym(\g[-1]))\longrightarrow \C^0(G, \Sym(\g[-1])),\]
so all such 1-shifted Poisson structures have $\phi = 0$.

There is also an interesting interpretation of maps \eqref{eq:forgetshiftstack} given by forgetting the shift. Given an element $c\in\Sym^2(\g)^G\subset \g\otimes \g$ we denote by $c_{12}\in (\U \g)^{\otimes 3}$ the element $c\otimes 1\in\g\otimes \g\otimes \U\g$ and similarly for $c_{23}$. Then one can see that $[c_{12}, c_{23}]\in \g^{\otimes 3}$. More concretely, pick a basis $\{e_i\}$ of $\g$ with structure constants $f_{ij}^k$ so that
\[[e_i, e_j] = \sum_k f_{ij}^k e_k.\]
Let $c = \sum_{i,j} c^{ij} e_i\otimes e_j$. Then
\[[c_{12}, c_{23}] = \sum_{i,j,k,l,a} c^{ij} c^{kl} f^a_{jk} e_i\otimes e_a\otimes e_l.\]
The following statement combines \cite[Proposition 2.15]{Sa3} and \cite[Proposition 2.17]{Sa3}.

\begin{prop}
Let $G$ be an affine algebraic group.
\begin{enumerate}
\item The map $\Pois(\B G, 2)\rightarrow \Pois(\B G, 1)$ sends $c\in\Sym^2(\g)^G$ to a quasi-Poisson structure with $(\pi=0, \phi)$ where
\[\phi = -\frac{1}{6}[c_{12}, c_{23}]\in\wedge^3(\g)^G.\]

\item The image of a Lie bialgebra structure under $\Pois(\pt\rightarrow \B\hG, 1)\rightarrow \Pois(\pt\rightarrow \B\hG, 0)$ is trivial iff the formal Poisson-Lie group $G^*$ integrating $\g^*$ is formally linearizable, i.e. it is Poisson isomorphic to the completion of $\g^*$ at the origin.
\end{enumerate}
\label{prop:forgetshiftBG}
\end{prop}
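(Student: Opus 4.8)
Both parts amount to computing the forgetful map \eqref{eq:forgetshiftstack} for a classifying stack through the universal enveloping description underlying the additivity equivalence of \cref{thm:poissonadditivity}: there $\forget_{n+1}^n(\Omega\g)$ is the $\bP_n$-algebra Koszul-dual to the $\bP_n$-coalgebra $\U(\g)$, so by \cref{prop:poissonlinearization} its image is the trivial structure exactly when $\U(\g)\cong\Sym(\g)$ as $\bP_n$-coalgebras. The plan in each case is thus to identify the relevant (shifted) Lie bialgebra $\g$ and to read off the deviation of $\U(\g)$ from $\Sym(\g)$.

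For (1) I would apply $\forget_3^2$ to $\cO(\B G)=\C^\bullet(G)$ with its $2$-shifted Poisson structure. By \cref{prop:quasipoissonshiftedpoisson}, the element $c\in(\Sym^2\g)^G$ is the unique weight-$2$ Maurer--Cartan element of $\Pol(\B G, 2)\cong\C^\bullet(G, \Sym(\g[-2]))$, and being $G$-invariant it is a cochain of group-cohomological degree $0$; on the target side the bivector $\pi$ lives in $\C^1(G, \wedge^2\g)$ and the trivector $\phi$ in $\C^0(G, \wedge^3\g)$. The first step is to note that the multiplicative bivector $\pi$ is produced from the positive group-cochain part of the source structure, which vanishes for $c$, so $\pi=0$ and only the anomaly $\phi$, coming from the associativity (Jacobiator) constraint for the enveloping $\bE_1$-structure, survives. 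The second step evaluates this quadratic constraint: it contracts two copies of $c$ through a single structure constant, giving $[c_{12}, c_{23}]$, and invariance of $c$ forces this element to be totally antisymmetric, i.e. to lie in $(\wedge^3\g)^G$. The coefficient $-\tfrac{1}{6}$ is then pinned down by the weights entering the bar--cobar adjunction.

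For (2) I would reduce directly to \cref{prop:poissonlinearization}. A $1$-shifted Poisson structure on $\pt\to\B\hG$ is, as recalled above, a Lie bialgebra structure on $\g$; dually this is an ordinary Lie bialgebra structure on $\g^*$, integrating to the Poisson--Lie group $G^*$ whose algebra of distributions is $\U(\g^*)$. Under additivity the relevant forgetful map is the instance $\forget_2^1$, and by \cref{prop:poissonlinearization} its image is the trivial $0$-shifted Poisson structure precisely when $\U(\g^*)\cong\Sym(\g^*)$ as $\bP_1$-coalgebras. Since $\U(\g^*)$ and $\Sym(\g^*)$ are the distribution algebras of $G^*$ and of the formal completion $\widehat{(\g^*)}_0$ respectively, this coalgebra equivalence is exactly the assertion that $G^*$ is formally linearizable as a Poisson--Lie group.

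The main obstacle is in (1): one must make the additivity/enveloping functor explicit enough to extract not merely the vanishing of $\pi$ but the precise quadratic expression for $\phi$ and its normalization $-\tfrac{1}{6}$, and to verify that invariance of $c$ antisymmetrizes $[c_{12}, c_{23}]$ into $(\wedge^3\g)^G$. In (2) the only subtlety is the bookkeeping of the duality $\g\leftrightarrow\g^*$, i.e. identifying which Poisson--Lie group's distribution coalgebra is being linearized; once that dictionary is fixed, the conclusion follows at once from \cref{prop:poissonlinearization}.
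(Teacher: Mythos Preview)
The paper does not prove this proposition; it simply records that the statement combines \cite[Proposition 2.15]{Sa3} and \cite[Proposition 2.17]{Sa3}. The only hint in the text is the remark following \cref{prop:poissonlinearization} that $\forget_{n+1}^n$ is governed by the comparison of $\U(\g)$ with $\Sym(\g)$ and is ``closely related to the theory of formal linearizations of Poisson--Lie groups'', with a pointer to the same \cite[Proposition 2.17]{Sa3}. Your approach to part (2) follows exactly this route and is the intended one. One small gap: \cref{prop:poissonlinearization} is a statement about a single $\bP_{n+1}$-algebra, whereas the claim concerns the Poisson \emph{morphism} $\pt\to\B\hG$. You should make explicit that, since $\pt$ carries only the trivial structure, $\Pois(\pt\to\B\hG,n)$ sits inside $\Pois(\B\hG,n)$ as the locus of structures vanishing at the basepoint (the paper says this for $n=1$ just before \cref{prop:quasipoissonshiftedpoisson}), so that the forgetful map for the arrow is the restriction of the one for $\B\hG$ and \cref{prop:poissonlinearization} then applies with the $0$-shifted Lie bialgebra $\g^*$.

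For part (1) your sketch is thinner than it appears. The assertions that $\pi$ is produced from the ``positive group-cochain part'' of the source and that the ``Jacobiator constraint'' yields $[c_{12},c_{23}]$ with coefficient $-\tfrac{1}{6}$ are precisely the computations that need to be carried out; neither follows formally from the additivity theorem as stated in these notes. In \cite{Sa3} this is done by an explicit calculation in the bar--cobar models for $\bP_n$-algebras, and the present paper offers no shortcut. As written, you have made the formula for $\phi$ plausible but not established it.
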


Now suppose $H\subset G$ is an abelian subgroup equipped with a nondegenerate pairing. Given a function $r\colon H\rightarrow \g\otimes \g$, its differential $\ddr r$ defines a function $H\rightarrow \h\otimes \g\otimes \g$. We denote by $\Alt(\ddr r)$ the function $H\rightarrow \wedge^3(\g)$ given by its complete antisymmetrization.

\begin{defn}
Let $G$ be an affine algebraic group and $H\subset G$ a closed subgroup. A \defterm{quasi-triangular classical dynamical $r$-matrix with base $H$} is an $H$-equivariant function $r\colon H\rightarrow \g\otimes \g$ satisfying
\begin{enumerate}
\item $[r_{12}, r_{13}] + [r_{12}, r_{23}] + [r_{13}, r_{23}] + \Alt(\ddr r) = 0$.

\item The symmetric part of $r$ is a constant element of $\Sym^2(\g)^G$.
\end{enumerate}
\end{defn}

Note that the quotient $[H/H]$ has a canonical 1-shifted symplectic structure. Indeed, it can be written as a Lagrangian self-intersection of the diagonal
\[\B H\times_{\B H\times \B H} \B H,\]
where $\B H$ carries a 2-shifted symplectic structure, to which we can apply \cref{prop:lagrangianintersection}. Alternatively, we can identify
\[[H/H]\cong \Map(S^1_\B, \B H)\]
and use the AKSZ construction given by \cite[Theorem 2.5]{PTVV} to transgress the 2-shifted symplectic structure on $\B H$ to a 1-shifted symplectic structure on $[H/H]$. Finally, since $H$ is abelian, we can identify $[H/H]\cong H\times \B H$ and write in coordinates a 1-shifted symplectic structure which pairs $H$ and $\B H$.

Using the 1-shifted symplectic on $[H/H]$ we can turn it into a 1-shifted Poisson structure on $[H/H]$ using \cref{thm:nondegeneratepoisson}.

\begin{prop}
The set of quasi-triangular classical dynamical $r$-matrices with base $H$ is equivalent to the space of pairs:
\begin{enumerate}
\item A 1-shfited Poisson map
\[[H/H]\longrightarrow \B H\longrightarrow \B G\]
compatible with the given $1$-shifted Poisson structure on $[H/H]$.

\item A lift of the underlying 1-shifted Poisson structure on $\B G$ to a 2-shifted Poisson structure.
\end{enumerate}
\label{prop:dynamicalrmatrix}
\end{prop}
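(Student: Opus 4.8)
The plan is to unwind both pieces of data into explicit Chevalley--Eilenberg and Maurer--Cartan data using the computations for classifying stacks, and then match them against the two defining relations of a quasi-triangular dynamical $r$-matrix. Throughout I would use the identification $[H/H]\cong H\times \B H$ (valid since $H$ is abelian), the identification $\bL_{\B G}\cong \g^*[-1]$ from \cref{ex:BG}, and the resulting presentation $\Pol(\B G, n)\cong \C^\bullet(G, \Sym(\g[-n]))$.

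First I would dispose of item (2). By \cref{prop:quasipoissonshiftedpoisson}(1) a $2$-shifted Poisson structure on $\B G$ is precisely a constant element $c\in\Sym^2(\g)^G$, and this $c$ is to be identified with the symmetric part of $r$; this is exactly condition (2) in the definition. By \cref{prop:forgetshiftBG}(1) the underlying $1$-shifted Poisson structure on $\B G$ has vanishing bivector and cobracket $\phi=-\tfrac16[c_{12},c_{23}]$, so the target of the Poisson map in item (1) carries the fixed quasi-Poisson structure $(\pi=0,\phi)$ determined by $c$.

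Next I would analyze item (1). A $1$-shifted Poisson map $[H/H]\to \B H\to \B G$ compatible with the fixed structure on $[H/H]$ is encoded, via relative polyvectors and the Maurer--Cartan formalism of \cref{prop:poissonpolyvectors} and \cref{prop:coisotropicpolyvectors}, by a Maurer--Cartan element whose weight-$2$ component is an $H$-equivariant function $r\colon H\to \g\otimes\g$. Here the factor $H$ in $[H/H]\cong H\times\B H$ supplies the argument of $r$, the factor $\B G$ supplies the target $\g\otimes\g$ through $\bT_{\B G}\cong\g[1]$, and $H$-equivariance is the residual invariance of the relevant group cochains. The compatibility of the induced $1$-shifted structure on $\B G$ with the $2$-shifted lift of item (2) then identifies the symmetric part of $r$ with the constant $c\in\Sym^2(\g)^G$, recovering condition (2) of the definition.

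The heart of the argument, and the step I expect to be the main obstacle, is to show that the Maurer--Cartan equation for this element is the classical dynamical Yang--Baxter equation of condition (1). The $H$-factor of $[H/H]$ contributes the de Rham differential along $H$, producing the dynamical term $\Alt(\ddr r)$, while the Lie bracket on $\g$ together with the cobracket $\phi$ produces the algebraic terms $[r_{12},r_{13}]+[r_{12},r_{23}]+[r_{13},r_{23}]$. The delicate point will be tracking signs and the interaction of the de Rham differential on $H$ with the Chevalley--Eilenberg differential for $G$, and confirming that the totally antisymmetric algebraic part is reconciled with $\phi=-\tfrac16[c_{12},c_{23}]$ exactly when the symmetric part of $r$ is the constant $c$. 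Finally, a degree count (using that $H$ is abelian, so the relevant weight-$p$ pieces sit in a single cohomological degree) shows that the space of such pairs is discrete, so the equivalence is an equivalence of sets, with inverse sending a solution $r$ to its constant symmetric part $c$ together with the Poisson map it determines.
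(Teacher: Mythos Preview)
The paper states this proposition without proof, so there is nothing to compare your argument against directly; I will assess your sketch on its own merits.

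Your treatment of item (2) is correct: \cref{prop:quasipoissonshiftedpoisson}(1) identifies a $2$-shifted Poisson structure on $\B G$ with an element $c\in\Sym^2(\g)^G$, and \cref{prop:forgetshiftBG}(1) identifies the induced $1$-shifted structure as $(\pi=0,\,\phi=-\tfrac16[c_{12},c_{23}])$.

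There is, however, a genuine gap in your treatment of item (1). You invoke \cref{prop:poissonpolyvectors} and \cref{prop:coisotropicpolyvectors}, but neither result describes the space $\Pois(f,n)$ of $n$-shifted \emph{Poisson morphisms} in terms of Maurer--Cartan elements; the former concerns a single space and the latter concerns coisotropic structures. The only bridge in the paper is the graph construction of Proposition~2.2, which expresses $\Pois(f,1)$ as a fiber of $\Cois(g,1)$ for the graph $g\colon [H/H]\to [H/H]\times\B G$. To carry out your plan you must first pass through this reduction, then compute the relative polyvector algebra $\Pol(g,1)$ explicitly using $[H/H]\cong H\times\B H$, and only then identify the weight-$2$ component with an $H$-equivariant function $r\colon H\to\g\otimes\g$. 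As written, that identification is asserted rather than derived.

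A smaller point: your description of how the Maurer--Cartan equation yields the classical dynamical Yang--Baxter equation conflates two contributions. The Schouten self-bracket of $r$ already produces $[r_{12},r_{13}]+[r_{12},r_{23}]+[r_{13},r_{23}]$ from the Lie bracket on $\g$; the term $\phi$ on the target does not generate these, but rather enters as the obstruction that must be cancelled, and it is precisely this cancellation that forces the symmetric part of $r$ to be the constant $c$ and links items (1) and (2). You should separate these two mechanisms when you write out the equation.
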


\subsection{Moduli of bundles}
\label{sect:modulibundles}

Let $G$ be a split simple group, let $B\subset G$ be the Borel subgroup and $H\subset B$ the Cartan subgroup. Choose a nondegenerate pairing in $\Sym^2(\g^*)^G\cong \Sym^2(\h^*)^W$ which defines 2-shifted symplectic structures on $\B G$ and $\B H$. The following is shown in \cite[Lemma 3.4]{Sa4}.

\begin{prop}
The correspondence
\begin{equation}
\xymatrix{
& \B B \ar[dl] \ar[dr] & \\
\B H && \B G
}
\label{eq:borelcorrespondence}
\end{equation}
has a unique 2-shifted Lagrangian structure.
\end{prop}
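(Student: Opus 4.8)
The plan is to present the correspondence as a single morphism $f = (p, i)\colon \B B\to \overline{\B H}\times \B G$, where $p\colon \B B\to \B H$ is induced by the projection $B\twoheadrightarrow H = B/N$ (with $N$ the unipotent radical), $i\colon \B B\to \B G$ is induced by the inclusion $B\hookrightarrow G$, and $\overline{\B H}$ denotes $\B H$ with the opposite $2$-shifted symplectic structure, as is standard for Lagrangian correspondences from $\B H$ to $\B G$. By \cref{ex:BG}, applied to the affine algebraic groups $B$, $H$, $G$, we have $\bT_{\B B}\cong \mathfrak{b}[1]$, $f^*\bT_{\overline{\B H}\times \B G}\cong (\h\oplus\g)[1]$ and $\bL_{\B B}\cong \mathfrak{b}^*[-1]$, where the comparison map $\bT_{\B B}\to f^*\bT_{\overline{\B H}\times \B G}$ is $\mathfrak{b}[1]\to (\h\oplus\g)[1]$, $x\mapsto (\bar x, x)$, with $\bar x$ the image of $x$ in $\h = \mathfrak{b}/\mathfrak{n}$. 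All of the data entering a Lagrangian structure (the pullback of $\omega$, the map $\omega^\sharp$, and hence $\lambda^\sharp$) is then governed by the linear algebra of the invariant pairing on $\g$, which I would exploit throughout.

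First I would check that the underlying two-form vanishes strictly. The restriction of $\langle -,-\rangle_\g$ to $\mathfrak{b} = \h\oplus\mathfrak{n}$ pairs $\h$ with $\h$ via $\langle-,-\rangle_\h$, pairs $\mathfrak{n}$ trivially with $\h$, and pairs $\mathfrak{n}$ trivially with itself, since a sum of two positive root spaces never pairs nontrivially; hence $\langle x, y\rangle_\g = \langle \bar x, \bar y\rangle_\h$ for $x,y\in\mathfrak{b}$. With the opposite sign on $\overline{\B H}$, the pulled-back two-form on $\B B$ is therefore identically zero. A weight analysis then upgrades this to $f^*\omega\simeq 0$ in $\cA^{2, cl}(\B B, 2)$: since $\bL_{\B B}$ sits in cohomological degree $1$, the weight $p$ part of $\DR(\B B)$ is concentrated in degrees $\geq 2p$, so a closed two-form of degree $2$ is detected entirely by its weight-$2$ component, namely the underlying two-form, which vanishes. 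Consequently an isotropic structure on $f$ lifting $\omega$ exists, and the space of such structures is, once nonempty, a torsor over $\cA^{2, cl}(\B B, 1)$.

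Next I would establish contractibility and nondegeneracy. The same degree estimate shows that $\cA^{2, cl}(\B B, 1) = \Map_{\ich^{gr,\epsilon}}(k(2)[-1], \DR(\B B)[2])$ is contractible: in weight $p\geq 2$ the relevant component lands in a degree below $2p$, where $\DR(\B B)$ has no cohomology, so the entire mapping complex vanishes. Hence the space of isotropic structures on $f$ lifting $\omega$ is contractible. It remains to verify that this unique isotropic structure is nondegenerate, i.e. Lagrangian. Here $\bT_{\B B/(\overline{\B H}\times \B G)}\cong (\h\oplus\g)/\mathfrak{b}$ and $\bL_{\B B}[1]\cong \mathfrak{b}^*$, both in degree $0$, and $\lambda^\sharp$ is induced by $\h\oplus\g\xrightarrow{\omega^\sharp}\h^*\oplus\g^*\to \mathfrak{b}^*$. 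Computing its left kernel using the decomposition $\mathfrak{b} = \h\oplus\mathfrak{n}$ and nondegeneracy of $\langle-,-\rangle_\g$ shows the kernel is exactly the image of $\mathfrak{b}$; since $\dim(\h\oplus\g)/\mathfrak{b} = \dim\mathfrak{b}$, the map $\lambda^\sharp$ is an isomorphism.

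The main obstacle is not the strict linear-algebra vanishing of the two-form but the passage to closed forms and the uniqueness claim: one must control the full graded mixed complex $\DR(\B B)$, not merely its underlying two-form, and confirm that all higher-weight data, closedness data, and higher homotopies are uniquely pinned down. This is precisely what the degree estimate for $\DR(\B B)$ delivers, so the crux is to make that estimate precise and to check that the geometric nondegeneracy morphism $\lambda^\sharp$ really agrees with the linear map computed from the pairing. Combining contractibility of the isotropic space with the automatic nondegeneracy then shows that the fiber of $\Lagr(f, 2)\to \Symp(\overline{\B H}\times\B G, 2)$ over the fixed $\omega$ is contractible, which is the asserted uniqueness.
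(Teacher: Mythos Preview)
Your argument is correct. The paper itself does not prove this proposition; it simply cites \cite[Lemma 3.4]{Sa4}, so there is no in-text proof to compare against. Your approach---reducing to the linear algebra of the invariant pairing to see that $\langle x,y\rangle_\g=\langle\bar x,\bar y\rangle_\h$ on $\mathfrak b$, then using the degree bound $\DR(\B B)(p)\in[\,2p,\infty)$ (coming from $\bL_{\B B}\cong\mathfrak b^*[-1]$) to kill all higher-weight closed-form data and to show $\cA^{2,cl}(\B B,1)$ is contractible, and finally verifying nondegeneracy by checking that the kernel of $\h\oplus\g\to\mathfrak b^*$ is exactly the image of $\mathfrak b$---is exactly the kind of argument one finds in the cited reference, and each step checks out. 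One small point worth making explicit is that the pairing on $\h$ is by construction the restriction of the pairing on $\g$ (via the Chevalley isomorphism $\Sym^2(\g^*)^G\cong\Sym^2(\h^*)^W$ used in the paper), which is what makes the cancellation $-\langle\bar x,\bar y\rangle_\h+\langle x,y\rangle_\g=0$ go through.
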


Let $E$ be an elliptic curve and denote by $\Bun_{(-)}(E) = \Map(E, \B(-))$ the moduli stack of principal bundles on $E$. Applying \cite[Theorem 2.5]{PTVV} we get a 1-shifted Lagrangian correspondence
\[
\xymatrix{
& \Bun_B(E) \ar[dl] \ar[dr] & \\
\Bun_H(E) && \Bun_G(E)
}
\]

In particular, we get a 0-shifted Poisson structure on $\Bun_B(E)$ known as the Feigin--Odesskii Poisson structure introduced in \cite{FO}, see \cite{HP} and \cite[Example 4.11]{Sa3} for more details.

\begin{example}
Let $G=\PGL_2$, so $H=\bG_m$. Fix a line bunde $\cL\in\Bun_H(E)$ and let $\Bun_B(E; \cL)$ be the fiber of $\Bun_B(E)\rightarrow \Bun_H(E)$ at $\cL$. Then by the above $\Bun_B(E;\cL)\rightarrow \Bun_G(E)$ has a 1-shifted Lagrangian structure.

The stack $\Bun_B(E; \cL)$ parametrizes rank $2$ vector bundles $\cF$ on $E$ which fit into an exact sequence
\[0\longrightarrow \cL\longrightarrow \cF\longrightarrow \cO\longrightarrow 0.\]

Such extensions are parametrized by
\[\H^1(E; \cL)\cong \H^0(E; \cL^{-1})^*.\]

Suppose $\deg(\cL) = -4$. Then
\[\Bun_B(E; \cL)\cong [\bA^4 / \bG_m].\]
It has an open substack given by $\bP^3$. Therefore, we get a Poisson structure on $\bP^3$ which is known as the Sklyanin Poisson structure (introduced in \cite{Sk}).
\end{example}

The AKSZ construction given by \cite[Theorem 2.5]{PTVV} shows that the functor $\Map(E, -)$ sends $n$-shifted symplectic stacks to $(n-1)$-shifted symplectic stacks. Spaide \cite{Sp} has given a version of this construction which shows that the moduli space of framed maps $\Map^{fr}(\bP^1, -)$ sends $n$-shifted symplectic stacks to $(n-1)$-shited symplectic stacks. Here $\Map^{fr}(\bP^1, -)$ consists of maps from $\bP^1$ which have a specified value at the basepoint of $\bP^1$.

In particular, we get a $1$-shifted Lagrangian correspondence
\[
\xymatrix{
& \Map^{fr}(\bP^1, \B B) \ar[dl] \ar[dr] & \\
\Map^{fr}(\bP^1, \B H) && \Map^{fr}(\bP^1, \B G).
}
\]

The stack $\Map^{fr}(\bP^1, \B H)$ is a discrete set, so restricting to a fixed component $\lambda\in\Map^{fr}(\bP^1, \B H)$, we get a 1-shifted Lagrangian
\[\Map^{fr,\lambda}(\bP^1, \B B)\longrightarrow \Map^{fr}(\bP^1, \B G).\]
One can check that the inclusion of the trivial $G$-bundle
\[\pt\longrightarrow \Map^{fr}(\bP^1, \B G)\]
is also a 1-shifted Lagrangian, so
\[\Map^{fr,\lambda}(\bP^1, G/B)\cong \Map^{fr, \lambda}(\bP^1, \B B)\times_{\Map^{fr}(\bP^1, \B G)} \pt\]
has a 0-shifted symplectic structure. The scheme $\Map^{fr, \lambda}(\bP^1, G/B)$ by a result of Jarvis \cite{Ja} (following some work by Donaldson) can be identified with the moduli space of $G$-monopoles on $\mathbf{R}^3$ with maximal symmetry breaking at infinity.

Finally, let us construct a classical dynamical $r$-matrix. Applying $\Map(S^1_\B, -)$ to the 2-shifted Lagrangian correspondence \eqref{eq:borelcorrespondence} by \cite[Theorem 2.5]{PTVV} we get a 1-shifted Lagrangian correspondence
\[
\xymatrix{
& [B/B] \ar[dl] \ar[dr] & \\
[H/H] && [G/G]
}
\]

The projection $[B/B]\rightarrow [H/H]$ is an isomorphism over the locus $[H^{reg}/H]\subset [H/H]$ where the Weyl group acts freely. For instance, in the case of $G=\mathrm{SL}_n$, $H^{reg}\subset H$ consists of diagonal matrices with distinct eigenvalues. Therefore, we get a 1-shifted Lagrangian correspondence
\[
\xymatrix{
& [H^{reg}/H] \ar[dl] \ar[dr] & \\
[H^{reg}/H] && [G/G].
}
\]
which is a graph of a map $[H^{reg}/H]\rightarrow [G/G]$ which is hence a symplectomorphism. The composite
\[[H^{reg}/H]\longrightarrow [G/G]\longrightarrow \B G\]
therefore has a 1-shifted Poisson structure. So, by \cref{prop:dynamicalrmatrix} we obtain a dynamical $r$-matrix. One can check that in this way we get the basic trigonometric dynamical $r$-matrix, see \cite[Proposition 3.3]{ES}.

\printbibliography

\end{document}